\theoremstyle{plain}
\newtheorem{thm}{Theorem}[section]
\newtheorem{lem}[thm]{Lemma}
\newtheorem{prop}[thm]{Proposition}
\newtheorem{cor}{Corollary}
\theoremstyle{definition}
\theoremstyle{remark}
\theoremstyle{remark}
\definecolor{darkblue}{rgb}{0,0,0.5} 
\DeclareMathOperator{\diam}{diam}
\begin{document}

\title{Asymptotic Dimension of Graphs of Groups and One Relator Groups.}

\author{Panagiotis Tselekidis}


\maketitle



\begin{abstract} 
We prove a new inequality for the asymptotic dimension of HNN-extensions. We deduce that the asymptotic dimension of every finitely generated one relator group is at most two, confirming a conjecture of A.Dranishnikov.\\
As further corollaries we calculate the exact asymptotic dimension of Right-angled Artin groups
and we give a new upper bound for  
the asymptotic dimension of fundamental groups of graphs of groups. 
\end{abstract}

\tableofcontents

\section{Introduction}
In 1993, M. Gromov introduced the notion of the asymptotic dimension of metric spaces (see \cite{Gr}) as an invariant of 
finitely generated groups. It can be shown that if two metric spaces are quasi isometric then they have the same asymptotic dimension.\\
The asymptotic dimension $asdimX$ of a metric space $X$ is defined as follows: $asdimX \leq n$ if and only if for every $R > 0$ there exists a uniformly bounded covering $\mathcal{U}$ of $X$ such that the R-multiplicity of $\mathcal{U}$ is smaller than or equal to $n+1$ (i.e. every R-ball in $X$ intersects at most $n+1$ elements of $\mathcal{U}$).\\
There are many equivalent ways to define the asymptotic dimension of a metric space. It turns out that the asymptotic dimension of an infinite tree is $1$ and the asymptotic dimension of $\mathbb{E}^{n}$ is $n$. \\


In 1998, the asymptotic dimension achieved particular prominence in geometric group theory after a paper of Guoliang Yu, (see \cite{Yu}) which proved the Novikov higher signature conjecture for manifolds whose fundamental group
has finite asymptotic dimension.\\
Unfortunately, not all finitely presented groups have finite asymptotic dimension. For example, Thompson's group $F$ has infinite
asymptotic dimension since it contains $\mathbb{Z}^{n}$ for all $n$.\\
However, we know for many classes of groups that they have finite asymptotic dimension, for instance, hyperbolic, relative hyperbolic, Mapping Class Groups of surfaces and one relator groups have finite asymptotic dimension (see \cite{BD08}, \cite{Os}, \cite{BBF}, \cite{Mats}). The exact computation of the asymptotic dimension of groups or finding the optimal upper bound is more delicate.\\
Another remarkable result is that of Buyalo and Lebedeva (see \cite{BL}) where in 2006 they established the following equality for hyperbolic groups:
\begin{center}
$asdim G = dim \partial_{\infty}G + 1$.
\end{center}

The inequalities of G.Bell and A.Dranishnikov (see \cite{BD04} and \cite{Dra08}) play a key role on finding an upper bound for the asymptotic dimension of groups. However, in some cases the upper bounds that the inequalities of G.Bell and A.Dranishnikov provide us are quite far from being optimal. An example is the asymptotic dimension of one relator groups.\\

In this paper we prove some new inequalities that can be a useful tool for the computation of the asymptotic dimension of groups. As an application we give the optimal upper bound for the asymptotic dimension of one relator groups which was conjectured by A.Dranishnikov. As a further corollary we calculate the exact asymptotic dimension of any Right-angled Artin group-this has been proven earlier by N.Wright \cite{Wr} by different methods.\\

The first inequality and one of the main results we prove is the following: 

\begin{thm}\label{1.1}
Let $G \ast_{N}$ be an HNN-extension of the finitely generated group $G$ over $N$. We have the following inequality

\begin{center}
$asdim\,G \ast_{N}  \leq max \lbrace asdim G, asdimN +1 \rbrace.$
\end{center}
\end{thm}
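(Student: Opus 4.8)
The plan is to realize $\Gamma = G \ast_N$ as a tree of spaces and to separate the contribution of the vertex group from that of the edge group, so that the extra $+1$ is paid only along the edges. Fix finite generating sets, and let $T$ be the Bass--Serre tree of the HNN-extension, with vertex set $\Gamma/G$ and edge set $\Gamma/A$, where $A\cong N$ is an associated subgroup. There is a $\Gamma$-equivariant, coarsely Lipschitz projection $\pi\colon \Gamma\to T$ sending $\gamma$ to $\gamma G$; its point-fibers are the vertex spaces, each a copy of $G$, while for a fixed $C$ the $C$-neighborhoods of the cosets $\gamma A$ (the edge spaces) are uniformly quasi-isometric to $N$. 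Applying the Hurewicz-type mapping theorem of Bell--Dranishnikov \cite{BD04} directly to $\pi$ would only give $\asdim \Gamma \le \asdim T + \asdim G = \asdim G + 1$, since the point-fibers already carry the full dimension of $G$. The whole point is to avoid paying this $+1$ over the bulk of each vertex space, and to localize it to the edge groups.

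First I would fix the scale $R$ at which I want a uniformly bounded cover of multiplicity at most $\max\{\asdim G+1,\ \asdim N+2\}$, then choose a collar width $C=C(R)$. Writing $E=\bigcup_{\gamma}\gamma A$ for the union of all edge cosets, I split $\Gamma$ into the interface $\mathcal I=\mathcal N_C(E)$ and the bulk $\mathcal B=\Gamma\setminus\mathcal I$. By the Finite Union Theorem it then suffices to bound the asymptotic dimension of each piece separately, since $\asdim(\mathcal B\cup\mathcal I)\le\max\{\asdim\mathcal B,\ \asdim\mathcal I\}$, with no further loss.

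For the bulk I would prove a separation lemma: any path in the Cayley graph joining two distinct vertex spaces must cross the corresponding edge of $T$, and crossing an edge forces the path into the $C$-neighborhood of that edge's coset. Hence, once the collars are removed, the components of $\mathcal B$ are the cores of the individual vertex spaces; they are pairwise more than $C$ apart, and each is a subset of a conjugate of $G$. Thus $\mathcal B$ is a $C$-disjoint union of a family of uniform asymptotic dimension $\asdim G$, and the Infinite Union Theorem gives $\asdim\mathcal B\le\asdim G$, with no extra $+1$. For the interface I would use the coarsely Lipschitz map $\mathcal I\to \mathrm{Edges}(T)$ sending a point to a nearest edge coset; the target has $\asdim T=1$, and the preimage of an $r$-ball is a width-$r$ cluster of edge cosets, coarsely $N\times(\text{bounded})$, hence of asymptotic dimension $\asdim N$ uniformly in $r$. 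The Hurewicz-type theorem then yields $\asdim\mathcal I\le\asdim N+1$. Combining the two bounds gives $\asdim\Gamma\le\max\{\asdim G,\ \asdim N+1\}$.

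I expect the main obstacle to be the separation lemma together with the uniformity bookkeeping. Because $T$ may have infinite valence, a single vertex space abuts infinitely many edge cosets, so I must verify both that deleting all collars genuinely disconnects the bulk in a uniform way and that the infinitely many interface fibers are covered with bounded multiplicity; both reduce to the homogeneity of $\Gamma$ and to the fact that the family of $A$- and $B$-cosets inside $G$ has uniform asymptotic dimension $\asdim N$. The second delicate point is selecting the collar width $C(R)$ and the scales in the two union theorems so that the $R$-multiplicities of the bulk and interface covers add rather than compound; this forces the entire argument to be phrased as a uniform statement about families of subspaces rather than space by space.
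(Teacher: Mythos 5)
Your opening diagnosis is correct --- the whole difficulty is to avoid paying the $+1$ over the vertex spaces, and applying the Hurewicz-type theorem to $\pi\colon\Gamma\to T$ only gives $\asdim G+1$ --- but your decomposition collapses before it starts. You set $E=\bigcup_{\gamma}\gamma A$, the union of \emph{all} edge cosets. Since $1\in A$, every $\gamma\in\Gamma$ lies in $\gamma A$, so $E=\Gamma$: the union of all left cosets of any subgroup is the whole group. Hence $\mathcal{I}=\mathcal{N}_C(E)=\Gamma$, $\mathcal{B}=\varnothing$, and the bulk/interface split is vacuous. This cannot be repaired by choosing cosets more carefully: the edges of $T$ incident to the single vertex $gG$ are in bijection with the cosets of $N$ and of $\phi(N)$ inside $gG$, and the union of these attaching regions is already all of $gG$. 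In the tree of spaces of an HNN-extension the edge spaces are coarsely dense in every vertex space, so there is no ``core'' of a vertex space left after removing collars around its incident edge spaces, and no separation lemma of the kind you want can hold.

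The same density independently kills the interface estimate. Take $G=F_2=\langle a,b\rangle$ and $N=\langle a\rangle$: the preimage under your map $\mathcal{I}\to\mathrm{Edges}(T)$ of even a $1$-ball around an edge incident to $gG$ contains the collars of all edges incident to $gG$, whose union is coarsely all of $gG$. So the fibers over $r$-balls have uniform asymptotic dimension $\asdim G$, not $\asdim N$, and the Hurewicz-type theorem returns $\asdim\mathcal{I}\le\asdim G+1$, exactly the bound you were trying to beat. (There is also a smaller issue in the bulk step: the Infinite Union Theorem needs $r$-disjointness for \emph{every} $r$ after deleting a set $Y_r$ of controlled dimension, and a fixed collar width $C$ only gives $C$-disjointness; the natural $Y_r$ is again a union of coset neighborhoods, i.e.\ the same hard object.) The paper's proof avoids all of this by cutting along \emph{one} edge coset per cut vertex, and only at tree-levels in $r\mathbb{N}$: the partition pieces $V^u_r$ are slabs of tree-width $r$ containing many entire vertex spaces together with all interior edge cosets; their mutual boundaries are the spheres $M^u_R$ around the chosen cut cosets, which are pairwise far apart (Proposition \ref{2.6}) and quasi-isometric to $N\cup\phi(N)$, hence of dimension at most $\asdim N$. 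The real engine is Proposition \ref{2.8}: a slab $Q_m$ of bounded tree-width satisfies $\asdim Q_m\le\asdim G$ with no $+1$, proved by induction up the tree, where Lemma \ref{2.7} supplies the disjointness of the cosets $xtG$ needed for the Infinite Union Theorem and the deleted sets are quasi-isometric to the previous level. Finally the pieces are assembled by the Partition Theorem (Theorem \ref{2.4}), not the Finite Union Theorem, because that is the statement that tolerates boundaries of dimension $n-1$; this is where the $+1$ gets paid only on $N$.
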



Next, we calculate the asymptotic dimension of the Right-angled Artin groups. To be more precise, let $\Gamma$ be a finite simplicial graph, we denote by $A(\Gamma)$ the \textit{Right-angled Artin group} (RAAG) associated to the graph $\Gamma$.
We set 
\begin{center}
$Sim(\Gamma)= max \lbrace n \mid $ $\Gamma$ contains the 1-skeleton of the standard $(n-1)$-simplex $\Delta^{n-1} \rbrace.$
\end{center} Then by applying Theorem \ref{1.1} we obtain the following:

\begin{thm}\label{1.2}
Let $\Gamma$ be a finite simplicial graph. Then,
$$asdimA(\Gamma)=Sim(\Gamma).$$
\end{thm}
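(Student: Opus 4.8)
The plan is to establish the two inequalities $\asdim A(\Gamma) \ge Sim(\Gamma)$ and $\asdim A(\Gamma) \le Sim(\Gamma)$ separately. First I would unpack the definition: the $1$-skeleton of $\Delta^{n-1}$ is the complete graph $K_n$ on $n$ vertices, so $Sim(\Gamma)$ is exactly the clique number of $\Gamma$, the largest $n$ for which $K_n$ embeds in $\Gamma$ as a full subgraph. The generators labelling the vertices of such a $K_n$ pairwise commute and generate a copy of $\mathbb{Z}^{n}$ inside $A(\Gamma)$; moreover this is a \emph{special} (parabolic) subgroup, hence a retract of $A(\Gamma)$ via the homomorphism sending all other generators to the identity. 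A retract is undistorted, so this $\mathbb{Z}^{n}$ is quasi-isometrically embedded, and since asymptotic dimension is monotone under undistorted subgroups with $\asdim \mathbb{Z}^{n} = n$, we immediately get the lower bound $\asdim A(\Gamma) \ge Sim(\Gamma)$.

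For the upper bound I would argue by induction on the number $k$ of vertices of $\Gamma$, the base case $k \le 1$ being clear since then $A(\Gamma)$ is trivial or $\mathbb{Z}$. For the inductive step, pick any vertex $v$ and let $\mathrm{lk}(v)$ denote the full subgraph spanned by the neighbours of $v$. The only relations of $A(\Gamma)$ involving $v$ are $v w v^{-1}=w$ for $w \in \mathrm{lk}(v)$, so $A(\Gamma)$ has the presentation $\langle A(\Gamma\setminus v),\, v \mid v w v^{-1}=w,\ w \in \mathrm{lk}(v)\rangle$. This exhibits $A(\Gamma)$ as the HNN-extension $A(\Gamma\setminus v)\ast_{A(\mathrm{lk}(v))}$ of the finitely generated group $A(\Gamma\setminus v)$ over $A(\mathrm{lk}(v))$, with the identity associated isomorphism. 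Applying Theorem \ref{1.1} gives
$$\asdim A(\Gamma) \le \max\{\,\asdim A(\Gamma\setminus v),\ \asdim A(\mathrm{lk}(v))+1\,\}.$$
Both $\Gamma \setminus v$ and $\mathrm{lk}(v)$ are full subgraphs with strictly fewer than $k$ vertices, since $v \notin \mathrm{lk}(v)$, so the induction hypothesis applies and rewrites the two terms as $Sim(\Gamma\setminus v)$ and $Sim(\mathrm{lk}(v))+1$.

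It then remains to verify the purely combinatorial inequalities $Sim(\Gamma\setminus v) \le Sim(\Gamma)$ and $Sim(\mathrm{lk}(v))+1 \le Sim(\Gamma)$. The first is immediate, as a clique in a subgraph is a clique in $\Gamma$. For the second, a maximum clique $K$ of $\mathrm{lk}(v)$ consists of vertices that are mutually adjacent and each adjacent to $v$, so $K \cup \{v\}$ is a clique of size $Sim(\mathrm{lk}(v))+1$ in $\Gamma$. Together these yield $\asdim A(\Gamma) \le Sim(\Gamma)$, and combining with the lower bound completes the argument. I expect the genuinely substantive ingredient to be Theorem \ref{1.1} itself; within the present proof the only points requiring care are correctly recognizing the HNN structure with the identity associated subgroup, checking that the two inductive subgraphs are proper full subgraphs so that the induction is well-founded, and justifying that the clique subgroup is an undistorted copy of $\mathbb{Z}^{n}$ so that the lower bound is valid.
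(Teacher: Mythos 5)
Your proof is correct and takes essentially the same approach as the paper: the lower bound via the clique subgroup $\mathbb{Z}^{n}\leq A(\Gamma)$, and the upper bound by exhibiting $A(\Gamma)$ as the HNN-extension $A(\Gamma\setminus \lbrace v\rbrace)\ast_{A(\mathrm{lk}(v))}$ and applying Theorem \ref{1.1} in an induction on the number of vertices. Your version is in fact a mild streamlining: the paper first proves an auxiliary valence bound ($asdim\,A(\Gamma)\leq Val(\Gamma)+1$) and then splits into cases according to whether $\Gamma$ equals the maximal clique, whereas your single induction with the two clique inequalities $Sim(\Gamma\setminus\lbrace v\rbrace)\leq Sim(\Gamma)$ and $Sim(\mathrm{lk}(v))+1\leq Sim(\Gamma)$ handles everything at once.
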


%
%
%
%

In 2005, G.Bell, and A.Dranishnikov (see \cite{BD05}) gave a proof that the  asymptotic dimension of one relator groups is finite and also they gave an upper bound, namely the length of the relator plus one. Let $G= \langle S \mid r \rangle $ be a finitely generated one relator group such that $\mid\!r\!\mid = n$. Then
\begin{center}
$asdim\,G \leq n+1.$
\end{center}
To prove this upper bound G. Bell, and A. Dranishnikov used an inequality for the asymptotic dimension of HNN-extensions (see \cite{BD04}).\\
In particular, let $G$ be a finitely generated group and let $N$ be a subgroup of $G$. Then,
\begin{center}
$asdim\,G\ast_{N} \leq asdim\,G +1$.
\end{center}

In 2006, D. Matsnev  (see \cite{Mats}) proved a sharper upper bound for the asymptotic dimension of one relator groups. 
D. Matsnev proved the following: let $G= \langle S \mid r \rangle$ be a one relator group then 

\begin{center}
$asdim\,G \leq \lceil\!\frac{length(r)}{2}\!\rceil$.
\end{center}

Here by $\lceil\!a\!\rceil$ ($a \in \mathbb{R}$) we denote the minimal integer greater than
or equal to $a$.\\

Applying Theorem \ref{1.1} we answer a conjecture of A.Dranishnikov (see \cite{Dra}) giving the optimal upper bound for the asymptotic dimension of one relator groups.

\begin{thm}\label{1.3}
Let $G$ be a finitely generated one relator group. Then 
\begin{center}
$asdim\,G \leq 2$.
\end{center}
\end{thm}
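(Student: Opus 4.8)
The plan is to run the \emph{Magnus--Moldavanskii hierarchy} for one relator groups and feed each HNN splitting into Theorem~\ref{1.1}. Recall that a free group has asymptotic dimension at most $1$ (its Cayley graph with respect to a free basis is a tree), so the new inequality $\asdim\,G\ast_{N}\le\max\{\asdim G,\ \asdim N+1\}$ is tailored to keep the dimension from growing past $2$ precisely when the edge group $N$ is free. The whole argument is therefore an induction on the complexity of the relator, the inductive step being a single application of Theorem~\ref{1.1} to a splitting whose edge groups are free.

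Let $G=\langle x_1,\dots,x_n\mid r\rangle$ with $r$ cyclically reduced. If $r$ involves only one generator then $G$ is cyclic (finite or $\mathbb{Z}$) and $\asdim G\le 1$; this, together with free groups, forms the base of the induction. Otherwise I first arrange that some generator $t$ occurs in $r$ with exponent sum zero. When no generator has this property, I apply an automorphism $\varphi$ of the ambient free group $F$: such $\varphi$ act on $F^{\mathrm{ab}}=\mathbb{Z}^{n}$ through all of $GL_n(\mathbb{Z})$, and the Euclidean algorithm lets me send the nonzero exponent-sum vector of $r$ to a vector with a zero coordinate; since $\varphi$ descends to an isomorphism $F/\langle\langle r\rangle\rangle\cong F/\langle\langle\varphi(r)\rangle\rangle$, this only changes the presentation, not the group.

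With a stable letter $t$ of exponent sum zero in hand, the classical Magnus rewriting exhibits $G$ as an HNN extension $G\cong H\ast_{N}$, where $H=\langle x_{j,k}\mid r'\rangle$ is the one relator group on the conjugates $x_{j,k}=t^{k}x_jt^{-k}$ (one for each generator $x_j\neq t$ and $m\le k\le M$), $r'$ is the rewritten relator, and the two associated subgroups are generated by $\{x_{j,k}: m\le k\le M-1\}$ and $\{x_{j,k}: m+1\le k\le M\}$. Each associated subgroup omits a generator actually occurring in $r'$, so by the Freiheitssatz it is a \emph{Magnus subgroup} and hence free; I also record that $r'$ is strictly shorter than $r$ (the $t$-letters are deleted), which is what makes the hierarchy terminate. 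Thus $N$ is finitely generated free, $\asdim N\le 1$, and Theorem~\ref{1.1} gives
\[
\asdim G=\asdim\,H\ast_{N}\le\max\{\asdim H,\ \asdim N+1\}\le\max\{\asdim H,\ 2\}.
\]
By the inductive hypothesis $\asdim H\le 2$, so $\asdim G\le 2$.

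The main obstacle is entirely classical and lives in the second and third paragraphs: verifying that the hierarchy can be set up so that every edge group is finitely generated free (the Freiheitssatz input) and that the recursion is well founded. The asymptotic-dimension content is light once the hierarchy is in place---it is a single, repeated invocation of Theorem~\ref{1.1}, whose improvement over the older bound $\asdim\,G\ast_{N}\le\asdim G+1$ is exactly what upgrades the previous finiteness statements to the sharp value $2$. I would take care over the termination bookkeeping, since the exponent-sum reduction can lengthen the relator before the HNN splitting shortens it; the standard remedy is to measure complexity by the hierarchy height rather than by raw relator length.
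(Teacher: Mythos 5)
Your overall strategy --- induct along Magnus-type HNN splittings whose edge groups are free Magnus subgroups, and feed each splitting into Theorem \ref{1.1} --- is exactly the paper's strategy, and your zero-exponent-sum step (rewriting in the conjugates $t^k x_j t^{-k}$, Freiheitssatz to get free associated subgroups, then $\asdim G \le \max\{\asdim H, \asdim N + 1\} \le \max\{\asdim H,2\}$) matches the paper's Case 1 essentially verbatim. The genuine gap is in how you manufacture a generator of exponent sum zero, and hence in termination. You do it by an automorphism $\varphi$ of the ambient free group chosen via the Euclidean algorithm. But each elementary Nielsen move (say $b \mapsto ba^{-1}$) lengthens the relator by roughly one letter per occurrence of $b$, and the Euclidean algorithm may require many such moves; after the subsequent Magnus rewriting, the new relator $r'$ is shorter than $\varphi(r)$ but can be far longer than $r$ (its length is the number of non-$t$ letters of $\varphi(r)$). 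So your recorded claim that ``$r'$ is strictly shorter than $r$'' fails in precisely this case, and the induction on relator length collapses. Your proposed remedy --- ``measure complexity by the hierarchy height'' --- is circular: the height of the hierarchy is only defined once you know the hierarchy you are constructing is finite, and that finiteness is exactly the termination statement at stake. The classical finiteness of the Magnus--Moldavanskii hierarchy is itself proved by the length induction you have just broken, so it cannot be invoked as an independent fact for the hierarchy produced by your automorphism step.

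The paper closes this case differently (Case 2 of Theorem \ref{3.1}), following Magnus: instead of changing basis by an automorphism, it \emph{embeds} $G$ into a larger one-relator group $\Gamma = \langle t, x, s_3, \dots, s_k \mid p \rangle$ via $a \mapsto t^{-\epsilon_{r}(b)}x$, $b \mapsto t^{\epsilon_{r}(a)}$, $s_i \mapsto s_i$. Then $\epsilon_{p}(t)=0$, and --- this is the key bookkeeping your version lacks --- every occurrence of $b$ in $r$ becomes a pure power of $t$, which vanishes under the Magnus rewriting. Hence the rewritten relator $w$ satisfies $\mid\!w\!\mid \,\le\, \mid\!r\!\mid - (\text{number of occurrences of } b) \,\le\, \mid\!r\!\mid - 1$, so the induction on $\mid\!r\!\mid$ stays well founded, and $\asdim G \le \asdim \Gamma$ because passing to a finitely generated subgroup does not increase asymptotic dimension. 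If you replace your automorphism step by this embedding step (or else quote, with a precise reference, the finiteness of the Magnus--Moldavanskii hierarchy as a black box rather than re-deriving it), the rest of your argument goes through unchanged.
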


We note that R. C. Lyndon (see \cite{Ly}) has shown that the \textit{cohomological dimension} of a torsion-free one-relator group is smaller than or equal to $2$. Our result can be seen as a large scale analog of this.\\
We note that the large scale geometry of one relator groups can be quite complicated, for example one relator groups can have very large isoperimetric functions (see e.g. \cite{Pl}).

It is worth noting that L.Sledd showed that the Assouad-Nagata dimension of any finitely generated $C^{\prime}(1/6)$ group is at most two (see \cite{Sl}).\\

Theorem \ref{1.3} combined with the results of M.Kapovich and B.Kleiner (see \cite{KK}) leads us to a description of the boundary of  hyperbolic one relator groups.

We determine also the one relator groups that have asymptotic dimension exactly two. We prove that every infinite finitely generated one relator group $G$ that is not a free group or a free product of a free group and a finite cyclic group has asymptotic dimension equal to 2 (Proposition \ref{3.5}).\\
We obtain the following:\\
\textbf{Corollary.} \textit{Let $G$ be finitely generated freely indecomposable one relator group which is not cyclic. 
Then
\begin{center}
$asdim\,G = 2$.
\end{center}}
Moreover, we describe the finitely generated one relator groups in the following corollary:\\
\textbf{Corollary.} \textit{Let $G$ be a finitely generated one relator group. Then one of the following is true}:\\
\textbf{(i)} \textit{$G$ is finite cyclic, and $asdim\,G = 0$} \\
\textbf{(ii)} \textit{$G$ is a nontrivial free group or a free product of a nontrivial free group and a finite cyclic group, and $asdim\,G = 1$}\\
\textbf{(iii)} \textit{$G$ is an infinite freely indecomposable not cyclic group or a free product of a nontrivial free group and an infinite freely indecomposable not cyclic group, and $asdim\,G = 2$.}

Using Theorem \ref{1.1} and an inequality of A.Dranishnikov about the asymptotic dimension of amalgamated products (see \cite{Dra08}) we obtain a more general theorem for the asymptotic dimension of fundamental groups of graphs of groups. 

\begin{thm}\label{1.4}
Let $(\mathbb{G}, Y)$ be a finite graph of groups with vertex groups $\lbrace G_{v} \mid v \in Y^{0} \rbrace$ and edge groups $\lbrace G_{e} \mid e \in Y^{1}_{+} \rbrace$. Then the following inequality holds:

\begin{center}
$asdim \pi_{1}(\mathbb{G},Y,\mathbb{T})  \leq max_{v \in Y^{0} ,e \in Y^{1}_{+}} \lbrace asdim G_{v}, asdim\,G_{e} +1 \rbrace.$
\end{center}

\end{thm}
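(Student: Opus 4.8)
The plan is to reduce the general graph of groups to the two operations whose asymptotic dimension behaviour we already control: HNN-extensions (Theorem~\ref{1.1}) and amalgamated free products (via the cited inequality of Dranishnikov). Recall from Bass--Serre theory that the fundamental group $\pi_1(\mathbb{G},Y,\mathbb{T})$ of a finite graph of groups can be built up from the vertex groups by a finite sequence of elementary moves: first collapse a spanning tree $\mathbb{T}$ of $Y$ by iterated amalgamated free products of the vertex groups over the edge groups lying in $\mathbb{T}$, obtaining the fundamental group of the tree of groups; then, for each of the remaining edges $e \in Y^1_+ \setminus \mathbb{T}^1$, form an HNN-extension over the edge group $G_e$. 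So I would first fix notation for this decomposition and then induct on the number of edges of $Y$.

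First I would treat the base case and the amalgamation step along the spanning tree. If $Y$ is a single vertex the statement is trivial. Otherwise, pick a leaf edge $e$ of the spanning tree $\mathbb{T}$ with terminal vertex $v$, so that $\pi_1(\mathbb{G},Y,\mathbb{T}) = H \ast_{G_e} G_v$, where $H$ is the fundamental group of the smaller graph of groups obtained by deleting $v$ and $e$. Applying Dranishnikov's amalgamation inequality gives a bound for $\operatorname{asdim}(H \ast_{G_e} G_v)$ in terms of $\operatorname{asdim} H$, $\operatorname{asdim} G_v$ and $\operatorname{asdim} G_e + 1$; by the inductive hypothesis $\operatorname{asdim} H$ is already bounded by $\max_{v,e}\{\operatorname{asdim} G_v, \operatorname{asdim} G_e+1\}$ over the smaller graph, and since both $\operatorname{asdim} G_v$ and $\operatorname{asdim} G_e+1$ appear in the target maximum, everything is absorbed into the desired bound.

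Next I would peel off the non-tree edges one at a time as HNN-extensions. After collapsing the spanning tree we have a group $H_0$ with $\operatorname{asdim} H_0 \le \max_{v,e}\{\operatorname{asdim} G_v, \operatorname{asdim} G_e+1\}$; each remaining edge $e \in Y^1_+ \setminus \mathbb{T}^1$ contributes a stable letter, so we obtain a chain $H_0 \le H_1 \le \cdots \le \pi_1(\mathbb{G},Y,\mathbb{T})$ where $H_{i+1} = H_i \ast_{G_e}$ is an HNN-extension of $H_i$ over the associated edge group $G_e$. By Theorem~\ref{1.1} we get $\operatorname{asdim} H_{i+1} \le \max\{\operatorname{asdim} H_i,\ \operatorname{asdim} G_e + 1\}$, and again each new term is dominated by the global maximum. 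Iterating over all non-tree edges yields the claimed inequality.

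The main obstacle I anticipate is bookkeeping rather than genuine difficulty: I must verify that the edge groups and vertex groups appearing at each stage are genuinely the original $G_e$ and $G_v$ (so that the maximum on the right-hand side is taken over the correct index set $Y^0$ and $Y^1_+$), and that the finite-generation hypotheses needed to apply Theorem~\ref{1.1} and Dranishnikov's inequality are preserved under the intermediate amalgamations and HNN-extensions. In particular one should check that each intermediate group $H_i$ is finitely generated, which follows from finiteness of the graph together with finite generation of the vertex groups, and that the associated edge subgroups embed correctly as required by Bass--Serre theory. Once the inductive framework is set up, each inequality is a direct citation, so the proof is essentially a careful induction on the number of edges of $Y$.
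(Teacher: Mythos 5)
Your proposal is correct and takes essentially the same route as the paper: an induction on the number of edges of $Y$, handling spanning-tree edges via Dranishnikov's amalgamated product inequality and the non-tree edges via the HNN-extension inequality of Theorem \ref{1.1}, with all intermediate bounds absorbed into the global maximum. One small imprecision: the splitting $\pi_{1}(\mathbb{G},Y,\mathbb{T}) = H \ast_{G_{e}} G_{v}$ in your second paragraph is only valid when $Y=\mathbb{T}$ (a leaf of $\mathbb{T}$ may carry non-tree edges of $Y$), which is how your third paragraph implicitly reads it; the paper sidesteps this by making its two inductive cases mutually exclusive, removing non-tree edges (HNN case) whenever $\mathbb{T} \subsetneqq Y$ and only splitting off terminal tree edges (amalgam case) once $Y=\mathbb{T}$.
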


    %
%
\textbf{Acknowledgments:} I would like to thank Panos Papasoglu for his valuable advices during the development of this research work.\\
I would also like to offer my special thanks to Mark Hagen and Richard Wade for their very useful comments.

\section{Asymptotic dimension of HNN-extensions.}


Let $X$ be a metric space and $\mathcal{U}$ a covering of $X$, we say that the covering $\mathcal{U}$ is \textit{$d$-bounded} or   \textit{$d$-uniformly bounded} if $sup_{U \in \mathcal{U}}\lbrace \diam  U \rbrace \leq d$. The \textit{Lebesgue number} $L(\mathcal{U})$ of the covering $\mathcal{U}$ is defined as follows:

\begin{center}
$L(\mathcal{U})= sup \lbrace \lambda \mid $ if $ A \subseteq X $ with $ \diam  A \leq \lambda $ then there exists $ U \in \mathcal{U} $ s.t. $ A\subseteq U \rbrace$.
\end{center} 
We recall that the order $ord (\mathcal{U})$ of the cover $\mathcal{U}$ is the smallest number $n$ (if it exists) such that each point of the space belongs to at most $n$ sets in the cover.\\
For a metric space $X$, we say that $(r,d)-dim X \leq n$ if for $r > 0$ there exists a $d$-bounded cover $\mathcal{U}$ of
$X$ with $ord(\mathcal{U}) \leq n + 1$ and with Lebesgue number $L(\mathcal{U}) > r$. We refer to such a
cover as an ($r,d$)-\textit{cover} of $X$.\\
The following proposition is due to G.Bell and A.Dranishnikov (see \cite{BD04}).

\begin{prop}\label{2.1}
For a metric space $X$, $asdimX \leq n $ if and only if there exists a function
$d(r)$ such that $(r,d(r))-dim X \leq n$ for all $r > 0$.
\end{prop}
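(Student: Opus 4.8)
The plan is to prove both implications by passing between a cover's $R$-multiplicity and the pair (order, Lebesgue number) via two complementary neighbourhood operations: \emph{thickening} a cover of small multiplicity to raise its Lebesgue number, and \emph{shrinking} a cover of large Lebesgue number to control its multiplicity. The identity that drives everything is the observation that, for a cover $\mathcal V$ of $X$, a point $x$ lies in the $r$-neighbourhood $V^{+r}=\{z : d(z,V)\le r\}$ precisely when the ball $B(x,r)$ meets $V$; hence $\operatorname{ord}\big(\{V^{+r}\}_{V\in\mathcal V}\big)$ equals the $r$-multiplicity of $\mathcal V$.

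For the forward implication, I assume $\asdim X\le n$ and fix $r>0$. I would invoke the definition of $\asdim$ at scale $R=2r$ to obtain a $D$-bounded cover $\mathcal V$ whose $2r$-multiplicity is at most $n+1$, and then set $\mathcal U=\{V^{+2r}\}_{V\in\mathcal V}$. By the identity above $\operatorname{ord}(\mathcal U)\le n+1$; each member has diameter at most $D+4r$, so $\mathcal U$ is uniformly bounded; and since any set $A$ with $\diam A\le 2r$ containing a point $x\in V$ lies inside $V^{+2r}$, the Lebesgue number satisfies $L(\mathcal U)\ge 2r>r$. Thus $\mathcal U$ is an $(r,d(r))$-cover with $d(r)=D(2r)+4r$, giving $(r,d(r))\text{-}\dim X\le n$ for all $r$.

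For the converse, I assume $(r,d(r))\text{-}\dim X\le n$ for all $r$ and fix a target scale $R>0$. I would apply the hypothesis at $r=3R$ to get a $d(3R)$-bounded cover $\mathcal U$ with $\operatorname{ord}(\mathcal U)\le n+1$ and $L(\mathcal U)>3R$, and then \emph{shrink}: put $U^{\sharp}=\{z : d(z,X\setminus U)\ge t\}$ with $t=\tfrac{5}{4}R$. The Lebesgue number guarantees $\{U^{\sharp}\}$ still covers $X$ (each closed ball of radius $t$ has diameter $\le 2t<L(\mathcal U)$, hence lies in some $U$), the order does not increase since $U^{\sharp}\subseteq U$, and the choice $t>R$ forces any $x$ with $B(x,R)\cap U^{\sharp}\neq\varnothing$ to satisfy $d(x,X\setminus U)\ge t-R>0$, i.e. $x\in U$; so the $R$-multiplicity of $\{U^{\sharp}\}$ is at most $\operatorname{ord}(\mathcal U)\le n+1$. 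As $R$ was arbitrary, $\asdim X\le n$.

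The bookkeeping is routine, so I expect the only delicate point to be the interplay of strict versus non-strict inequalities in the shrinking step: one must choose the shrinking radius $t$ strictly between $R$ and $L(\mathcal U)/2$ so that the shrunk sets simultaneously cover $X$ (needing $2t<L(\mathcal U)$) and have $R$-multiplicity $\le n+1$ (needing $t>R$), which is exactly why the margin $L(\mathcal U)>3R$ is built in when the hypothesis is applied at $r=3R$.
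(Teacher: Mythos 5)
Your overall strategy --- thicken a cover of small multiplicity to raise its Lebesgue number, shrink a cover of large Lebesgue number to control its $R$-multiplicity --- is exactly the standard route; note that the paper itself gives no proof of Proposition \ref{2.1}, citing \cite{BD04}. Your shrinking (converse) direction is correct as written: with $L(\mathcal U)>3R$ and $t=\tfrac54 R$, the sets $U^{\sharp}$ do cover $X$, remain $d(3R)$-bounded, and any $U^{\sharp}$ meeting an $R$-ball about $x$ forces $x\in U$, so the $R$-multiplicity is at most $\operatorname{ord}(\mathcal U)\le n+1$.

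The forward direction, however, has a genuine flaw: the ``identity'' that $x\in V^{+r}$ precisely when $B(x,r)$ meets $V$ is false in a general metric space, because $d(x,V)\le r$ need not produce a point of $V$ within distance $r$ of $x$ when the infimum is not attained. Consequently $\operatorname{ord}\big(\{V^{+2r}\}\big)$ can strictly exceed the $2r$-multiplicity of $\mathcal V$, and your step ``by the identity above $\operatorname{ord}(\mathcal U)\le n+1$'' fails. Concretely, let $X$ consist of a point $x$ together with the points $a^{i}_{k}$, $1\le i\le N$, $k\in\mathbb N$, where $a^{i}_{k}$ sits at distance $1+1/k$ from $x$ on the $i$-th ray of a star of $N$ rays (with the induced path metric), and let $\mathcal V=\{\{x\}\}\cup\{V_{i}\}$ with $V_{i}=\{a^{i}_{k}:k\in\mathbb N\}$. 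Every closed $1$-ball in $X$ meets exactly one element of $\mathcal V$, so the $1$-multiplicity is $1$; yet $d(x,V_{i})=1$ for every $i$, so $x$ lies in all $N+1$ of the thickened sets, and the order of $\{V^{+1}\}$ is $N+1$. This defeats your argument at $r=1/2$ (scale $2r=1$, thickening $2r=1$), even though the proposition itself is of course true for this $X$. The repair is a one-line change of constants: invoke the definition of $\asdim$ at a scale strictly larger than the thickening radius, e.g. take a $D$-bounded cover $\mathcal V$ of $3r$-multiplicity at most $n+1$ and set $\mathcal U=\{V^{+2r}\}$. Then $x\in V^{+2r}$ gives $d(x,V)\le 2r<3r$, hence some $v\in V$ with $d(x,v)<3r$, so the $3r$-ball about $x$ meets $V$ and $\operatorname{ord}(\mathcal U)\le n+1$; the Lebesgue number is still at least $2r>r$ and the bound $\diam V^{+2r}\le D+4r$ is unchanged.
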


We recall that the family $X_{i}$ of subsets of $X$ satisfies the inequality $asdim X_{i} \leq n$
\textit{uniformly} if for every $R>0$ there exists a $D$-bounded covering $ \mathcal{U}_{i}$ of $X_{i}$ with $R-mult(\mathcal{U}_{i}) \leq n+1$, for every $i$. For the proofs of the following theorems \ref{2.2} and \ref{2.3} see \cite{BD01}. 

\begin{thm}{(Infinite Union Theorem)}\label{2.2}
Let $X= \cup_{a} X_{a}$ be a metric space where the family $\lbrace X_{a} \rbrace$ satisfies the inequality $asdimX_{a} \leq n$ uniformly. Suppose further that for every $r>0$ there is a subset $Y_{r} \subseteq X$ with $asdim Y_{r} \leq n$ so that $d(X_{a} \setminus Y_{r} , X_{b} \setminus Y_{r}) \geq r$
whenever $X_{a} \neq X_{b}$. Then $asdimX \leq n $.
\end{thm}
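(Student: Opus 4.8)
The plan is to verify the criterion of Proposition~\ref{2.1}: fixing an arbitrary scale $r>0$, I will construct a cover of $X$ whose diameters are bounded by a constant depending only on $r$, whose order is at most $n+1$, and whose Lebesgue number exceeds $r$. It is convenient to work with the equivalent ``colouring'' reformulation of this criterion, in which producing such a cover amounts to producing $n+1$ uniformly bounded families $\mathcal{W}_0,\dots,\mathcal{W}_n$, each of which is $r$-disjoint (any two distinct members of the same family lie at distance $\ge r$), whose union covers $X$. All of the constants below depend only on $r$.

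First I would extract the uniform data from the two hypotheses. Since $\{X_a\}$ satisfies $\asdim X_a\le n$ uniformly, there is a single bound $D$ and, for each $a$, families $\mathcal{U}^a_0,\dots,\mathcal{U}^a_n$ covering $X_a$, each $D$-bounded and $r$-disjoint. Now choose a separation scale $r'$ large relative to $r$ and $D$ and apply the separation hypothesis to obtain $Y=Y_{r'}$ with $\asdim Y\le n$ and $d(X_a\setminus Y, X_b\setminus Y)\ge r'$ whenever $X_a\ne X_b$. Because $\asdim Y\le n$, Proposition~\ref{2.1} supplies $n+1$ uniformly bounded, $r$-disjoint families $\mathcal{V}_0,\dots,\mathcal{V}_n$ covering $Y$, and I may take their disjointness constant as large as needed.

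The key structural observation is that the members of $\mathcal{U}^a_i$ lying away from $Y$ may be pooled across all indices $a$ without destroying $r$-disjointness: two such sets from different pieces $X_a\ne X_b$ lie in $X_a\setminus Y$ and $X_b\setminus Y$ respectively, hence at distance $\ge r'\ge r$ by the separation, while within a single piece they are $r$-disjoint already. Thus, discarding from each $\mathcal{U}^a_i$ the sets that meet a neighbourhood of $Y$, the pooled family $\widehat{\mathcal{U}}_i=\bigcup_a\{\,U\in\mathcal{U}^a_i : U\cap N_{r}(Y)=\varnothing\,\}$ is $D$-bounded and $r$-disjoint and covers everything lying outside a neighbourhood of $Y$. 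It then remains to cover that neighbourhood using a controlled enlargement of the families $\mathcal{V}_i$ and to define each colour $\mathcal{W}_i$ from $\mathcal{V}_i$ and $\widehat{\mathcal{U}}_i$.

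The main obstacle is precisely this interface near $Y$. Simply superimposing the two collections would yield order at most $2(n+1)$ rather than $n+1$, and merging $\mathcal{V}_i$ with $\widehat{\mathcal{U}}_i$ into one colour threatens $r$-disjointness exactly where the enlarged $Y$-cover meets the pieces' cover. The resolution I expect to need is a careful balancing of scales together with a clean hand-off: take $r'$ and the disjointness constant of the $Y$-cover large compared with $D$ and $r$, and arrange an annular region around $Y$ in which only one of the two collections is active, so that near $Y$ only the enlarged $\mathcal{V}_i$ contribute and far from $Y$ only $\widehat{\mathcal{U}}_i$ contribute, with no same-colour collision across the interface. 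This is the mechanism that proves the Finite Union Theorem applied to $Y\cup\big(\bigcup_a X_a\big)$, and the separation hypothesis is exactly what lets the infinitely many pieces $X_a\setminus Y$ play the role of the single second set there. Once the hand-off is set up so that each $\mathcal{W}_i$ is $r$-disjoint and uniformly bounded and the $\mathcal{W}_i$ jointly cover $X$, Proposition~\ref{2.1} gives $\asdim X\le n$.
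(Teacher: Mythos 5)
The paper itself gives no proof of Theorem \ref{2.2} (it cites \cite{BD01}), so I am comparing your outline against the standard Bell--Dranishnikov argument. Your scaffolding matches it: the colouring reformulation, pooling the piece-covers away from $Y$ via the separation hypothesis (this step of yours is correct), and combining with a cover supplied by $\asdim Y\le n$. The gap is the step you defer, which is the actual content of the theorem: the merge at the interface. The mechanism you propose for it --- balance the scales so that in an annular region ``only one of the two collections is active,'' with no same-colour collision --- cannot work, for any choice of thresholds. Concretely: if you keep only those $U$ with $U\cap N_t(Y)=\varnothing$, then a point lying solely in discarded sets can sit anywhere in $N_{t+D}(Y)$ (where $D$ bounds the diameters of the piece-cover sets), so the $Y$-side families must jointly cover $N_{t+D}(Y)$; but a kept $U$ is only guaranteed to avoid $N_t(Y)$ and may lie entirely inside $N_{t+D}(Y)$, so some $Y$-side set meets $U$, and nothing prevents that set from carrying the same colour as $U$. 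The deficit is exactly $D$ and reappears for every choice of $t$ and of the enlargement; making $r'$ or the disjointness of the $\mathcal{V}_i$ huge does not help, because the collision is between a kept $U$ and a $Y$-side set, not between two $Y$-side sets or two pieces. (For the same reason, this is not ``the mechanism that proves the Finite Union Theorem'': that proof does not use an annular hand-off either.)

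What closes the gap in \cite{BD01} is absorption rather than thresholding. Restrict each $\mathcal{U}^a_i$ to $X_a\setminus Y$ and pool, so the pooled families $\widehat{\mathcal{U}}_i$ are $D$-bounded, $r$-disjoint, and cover $X\setminus Y$; cover $Y$ itself by families $\mathcal{V}_0,\dots,\mathcal{V}_n$ that are uniformly bounded and $K$-disjoint with $K\ge 2D+3r$ (available since $\asdim Y\le n$). Then set
$\mathcal{W}_i=\bigl\{\,V\cup\bigcup\{U\in\widehat{\mathcal{U}}_i : d(U,V)<r\} \,:\, V\in\mathcal{V}_i\bigr\}\cup\bigl\{\,U\in\widehat{\mathcal{U}}_i : d(U,V)\ge r \text{ for all } V\in\mathcal{V}_i\bigr\}.$
Each amalgamated set lies in $N_{D+r}(V)$, hence is uniformly bounded and two of them are $\ge K-2(D+r)\ge r$ apart; an amalgamated set and a kept $U$ of the same colour are $\ge r$ apart because $d(U,V)\ge r$ by keeping and $d(U,U')\ge r$ for each absorbed $U'$ by $r$-disjointness of $\widehat{\mathcal{U}}_i$; coverage of $X$ is immediate. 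With this replacement of your interface step, the rest of your outline goes through and reproduces the cited proof.
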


\begin{thm}{(Finite Union Theorem)}\label{2.3}
For every metric space presented as a finite union $X= \cup_{i} X_{i}$ we have
\begin{center}
$asdimX = max\lbrace asdimX_{i} \rbrace$.
\end{center}

\end{thm}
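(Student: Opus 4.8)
The plan is to prove the two inequalities separately, with the reverse (upper) bound carrying all the content. The lower bound $\asdim X \ge \max_i \asdim X_i$ is immediate from monotonicity under subspaces: if $\mathcal{U}$ is a uniformly bounded cover of $X$ with $R$-multiplicity $\le \asdim X +1$, then the restricted family $\{U \cap X_i : U \in \mathcal{U}\}$ is a uniformly bounded cover of $X_i$, and since $B_{X_i}(x,R) \subseteq B_X(x,R)$ its $R$-multiplicity is no larger; hence $\asdim X_i \le \asdim X$ for each $i$. Set $n := \max_i \asdim X_i$; it remains to show $\asdim X \le n$.

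First I would reduce to the case of two sets by induction on the number of pieces. Writing $X = X_1 \cup (X_2 \cup \cdots \cup X_k)$ and invoking the induction hypothesis $\asdim(X_2 \cup \cdots \cup X_k) \le n$, it suffices to prove $\asdim(A \cup B) \le n$ whenever $\asdim A \le n$ and $\asdim B \le n$. I deliberately keep the induction rather than applying the union theorem to all $k$ sets at once, since the latter would force me to control the asymptotic dimension of a finite union inside the auxiliary set, which is exactly what is being proved.

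For the two-set case the idea is to feed the trivial two-element family $\{A,B\}$ into the Infinite Union Theorem (Theorem \ref{2.2}). This family satisfies $\asdim \le n$ uniformly, since there are only two members and at each scale $R$ one may take the larger of the two bounding diameters. It then remains to produce, for every $r>0$, a subset $Y_r \subseteq X$ with $\asdim Y_r \le n$ and $d(A \setminus Y_r, B \setminus Y_r) \ge r$. The natural choice is the intersection of metric neighborhoods
\[
Y_r = N_r(A) \cap N_r(B), \qquad N_r(S) = \{x \in X : d(x,S) \le r\}.
\]
The separation is then a one-line check: if $p \in A \setminus Y_r$, then $p \in N_r(A)$ forces $p \notin N_r(B)$, i.e. $d(p,B) > r$, so $d(p,q) > r$ for every $q \in B$, in particular for $q \in B \setminus Y_r$. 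Hence $d(A\setminus Y_r, B\setminus Y_r) \ge r$, and symmetrically.

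The one step requiring genuine care, which I would treat as the main obstacle, is the bound $\asdim Y_r \le n$. Here I would exploit that $Y_r \subseteq N_r(A)$ together with the coarse invariance of asymptotic dimension: the inclusion $A \hookrightarrow N_r(A)$ is isometric and $r$-dense, hence a coarse equivalence, so $\asdim N_r(A) = \asdim A \le n$, and monotonicity under subspaces gives $\asdim Y_r \le n$. Note that placing $Y_r$ inside a single neighborhood $N_r(A)$ (rather than inside the union $N_r(A) \cup N_r(B)$) is precisely what avoids circularity. With this in hand all hypotheses of Theorem \ref{2.2} are verified, yielding $\asdim(A \cup B) \le n$ and closing the induction. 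The remaining risks are purely bookkeeping: confirming the coarse equivalence $A \hookrightarrow N_r(A)$ inside the ambient space $X$, and keeping the uniform-boundedness constants consistent across the two members of the family.
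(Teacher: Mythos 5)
Your proof is correct, but note that the paper itself contains no proof of this statement --- it simply cites Bell--Dranishnikov \cite{BD01}, and your argument is essentially the standard derivation found there: apply the Infinite Union Theorem (Theorem \ref{2.2}) to the two-element family $\{A,B\}$ (which trivially satisfies the uniformity hypothesis), choose $Y_r$ to be a metric neighborhood, and handle more than two pieces by induction. The only place where you diverge from the cleanest route is the choice $Y_r = N_r(A)\cap N_r(B)$, which forces you to invoke coarse invariance of $\asdim$ to handle $\asdim Y_r$; the simpler choices $Y_r = N_r(A)\cap B$ (a subspace of $B$, so plain monotonicity gives $\asdim Y_r \le n$, and any $q \in B\setminus Y_r$ has $d(q,A)>r$) or even $Y_r = N_r(A)$ (which makes $A\setminus Y_r = \varnothing$, so the separation condition is vacuous) achieve the same end with less machinery. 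Your lower bound via restriction of covers and your handling of the induction are both fine, so the argument stands as written; the coarse-equivalence step you flag as the ``main obstacle'' is indeed true and standard, it is just avoidable.
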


A partition of a metric space $X$ is a presentation as a union $X= \cup_{i} W_{i}$ such that
$Int(W_{i})\cap  Int(W_{j}) = \varnothing $ whenever $i \neq j$. We denote by $\partial W_{i}$ the topological boundary of $W_{i}$ and by $Int(W_{i})$ the topological interior. We have that $ \partial W \cap Int(W) = \varnothing$. The boundary can be written as $$\partial W_{i}= \lbrace x \in X \mid d(x, W_{i})=d(x,X \setminus W_{i})=0    \rbrace.$$\\
For the proof of the following theorem see \cite{Dra08}.

\begin{thm}{(Partition Theorem)}\label{2.4}
Let X be a geodesic metric space. Suppose that for every $R > 0$ there is $d > 0$ and a partition $X= \cup_{i} W_{i}$ with $asdimW_{i}\leq n$ uniformly in $i$, and such that $(R,d)-dim(\cup_{i} \partial W_{i}) \leq n-1 $, where $\partial W_{i}$ is taken with
the metric restricted from $X$. Then $asdim X \leq n$.
\end{thm}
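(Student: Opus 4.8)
The plan is to reduce, via Proposition \ref{2.1}, to the following finite-scale task: for each $r>0$ produce a uniformly bounded cover of $X$ whose $r$-multiplicity is at most $n+1$ (equivalently, an $(r,d')$-cover of order $\le n+1$). I would apply the hypothesis at a scale $R$ chosen much larger than $r$, obtaining a partition $X=\cup_i W_i$, the uniform covers of the pieces, and a $d$-bounded cover of $B:=\cup_i\partial W_i$ realising the dimension bound; everything below is carried out at this one fixed scale. Note that the partition depends on $R$, so the asymptotic union theorems cannot be invoked as black boxes and the whole construction must be quantitative.

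The geometric heart is a separation estimate that uses the geodesic hypothesis. If $x\in W_i$ and $y\in W_j$ with $i\neq j$, then any geodesic from $x$ to $y$ meets $B$ (the endpoint of the maximal initial segment lying in $W_i$ lies on $\partial W_i$), so if both $x$ and $y$ satisfy $d(\cdot,B)\ge\rho$ then $d(x,y)\ge 2\rho$. Setting $Z=\{x\in X:d(x,B)\ge 2r\}$, the deep parts $\{W_i\cap Z\}_i$ form a $4r$-separated family; hence the union of the restricted uniform piece covers is a uniformly bounded cover of $Z$ whose $r$-multiplicity is still at most $n+1$, since an $r$-ball (diameter $\le 2r<4r$) meets the deep part of only one piece. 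On the complementary collar $Y=N_{2r}(B)$ I would thicken the boundary cover by $2r$: working with the multiplicity form of the dimension bound, a thickening by an amount much smaller than $R$ does not raise the order, so $Y$ receives a uniformly bounded cover of $r$-multiplicity at most $n$.

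It then remains to amalgamate the cover of the collar $Y$ and the cover of the deep region $Z$ into a single cover of $X=Y\cup Z$ without the multiplicities simply adding. This is exactly the mechanism behind the Finite Union Theorem (Theorem \ref{2.3}): one covers $Z$ at scale $r$ and then covers $Y$ at a larger scale while deleting from the collar cover the part already buried deep inside the pieces, so that across the transition annulus only one of the two families is genuinely active. Executed carefully, this yields $r$-multiplicity $\le\max\{n,n+1\}=n+1$, and Proposition \ref{2.1} gives $\asdim X\le n$.

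I expect the main obstacle to be precisely this final amalgamation. Counting naively, a small ball straddling the interface between $Y$ and $Z$ sees up to $n$ sets from the thickened boundary cover and up to $n+1$ sets from the piece covers, for a total of $2n+1$, which is useless; the entire point is to organise the two covers (via the scale shift and excision underlying the Finite Union Theorem, made quantitative at the single scale $r$ and exploiting the large ratio $R/r$) so that the transition region is absorbed and the bound drops back to $n+1$. A secondary technical point worth checking is the translation between the Lebesgue-number formulation of $(R,d)$-dimension used in the statement and the $R$-multiplicity formulation that makes the thickening step behave well.
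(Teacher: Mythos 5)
First, a caveat: the paper itself does not prove Theorem \ref{2.4}; it explicitly defers to Dranishnikov \cite{Dra08}. So your attempt can only be measured against the standard proof given there. Your overall architecture agrees with that proof: reduce via Proposition \ref{2.1} to a single-scale statement, use the geodesic hypothesis to show that the deep parts $W_i \setminus N_\rho(\cup_j \partial W_j)$ of distinct pieces are $2\rho$-separated (your argument for this is correct, and it is exactly what the geodesic assumption is for), cover the deep region by the restricted piece covers and the collar by a controlled enlargement of the boundary cover, and then combine. Your observations that the union theorems cannot be quoted as black boxes (because the partition varies with $R$) and that multiplicity, unlike order, behaves well under thickening are also both correct.

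The genuine gap is in the amalgamation step --- the one you yourself flag as the main obstacle --- and it cannot be closed by ``executing carefully'' the Finite Union mechanism with $R \gg r$. That mechanism (saturation/absorption) requires, within each colour, that the separation of the coarse family exceed $2D+3r$, where $D$ is the diameter bound of the sets being absorbed; here the absorbed sets are the elements of the piece covers at scale $\sim r$, so $D$ is the bound furnished by ``$\asdim W_i \le n$ uniformly in $i$''. But that bound exists only after the partition exists, i.e.\ after $R$ has been chosen: the hypothesis hands you the partition, and hence the function controlling its piece covers, \emph{in response to} $R$. Thus the inequality you actually need is $R \gg D$, not $R \gg r$, and your quantifier order cannot deliver it: a single piece-cover set of diameter $D \gtrsim R$ running along the interface can be $r$-close to two same-coloured boundary sets, and then the saturated sets collide, leaving the useless count $2n+1$. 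Nor can absorption be run in the opposite direction (piece sets absorbing collar sets): the collar sets necessarily have diameter at least the collar width $K$, while deep parts of distinct pieces are separated by only $2K$, so the requirement ``separation $> 2\cdot(\text{absorbed diameter}) + 3r$'' is arithmetically impossible. So precisely at the step where you assert the count ``drops back to $n+1$'', a further idea is needed (in effect, control of the piece covers that is uniform across the scales $R$, or a combination argument avoiding this circularity); the proposal as written does not contain it.
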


Let $G$ be a finitely generated group, $N$ a subgroup of $G$ and $\phi : N \rightarrow G$ a monomorphism. We set $\overline{G}=G\ast_{N}$ the HNN-extension of $G$ over the subgroup $N$ with respect to the monomorphism $\phi$. We fix a finite generating set $S$ for the group $G$. Then the set $\overline{S}=S\cup \lbrace t , t^{-1}\rbrace$ is a finite generating set for the group $\overline{G}$ and we set $C(\overline{G})=Cay(\overline{G},\overline{S})$ its Cayley graph.\\

\textit{Normal forms for HNN-extensions.}\\
We note that there exist two types of \emph{normal forms} for HNN-extensions, the right normal form and the left normal form. We are going to use both of them in this paper.\\
\textit{Right normal form:} Let $S_{N}$ and $S_{\phi(N)}$ be sets of representatives of \emph{right cosets} of $G/N$ and of $G/\phi(N)$ respectively. Then every $w \in \overline{G}$ has a unique normal form $w=gt^{\epsilon_{1}}s_{1}t^{\epsilon_{2}}s_{2}...t^{\epsilon_{k}}s_{k}$ where $g \in G$, $\epsilon_{i} \in \lbrace -1, 1 \rbrace$ and if $\epsilon_{i}=1 $ then $s_{i} \in S_{N}$, if $\epsilon_{i}=-1 $ then $s_{i} \in S_{\phi(N)}$.\\
\textit{Left normal form:} Let $_{N}S$ and $_{\phi(N)}S$ be sets of representatives of \emph{left cosets} of $G/N$ and of $G/\phi(N)$ respectively. Then every $w \in \overline{G}$ has a unique normal form $w=s_{1}t^{\epsilon_{1}}s_{2}t^{\epsilon_{2}}...s_{k}t^{\epsilon_{k}}g$ where $g \in G$, $\epsilon_{i} \in \lbrace -1, 1 \rbrace$ and if $\epsilon_{i}=1 $ then $s_{i} \in _{\phi(N)} S$, if $\epsilon_{i}=-1 $ then $s_{i} \in _{N}S$.\\
\textbf{Convention:} When we write a normal form we mean the right normal form, unless otherwise stated.\\

The group $\overline{G}= G\ast_{N}$ acts on its Bass-Serre tree $T$. There is
a natural projection $\pi: G\ast_{N} \rightarrow T$ defined by the action: $ \pi(g)=  gG$.\\
\begin{figure}
\begin{center} 
\includegraphics[scale=.5]{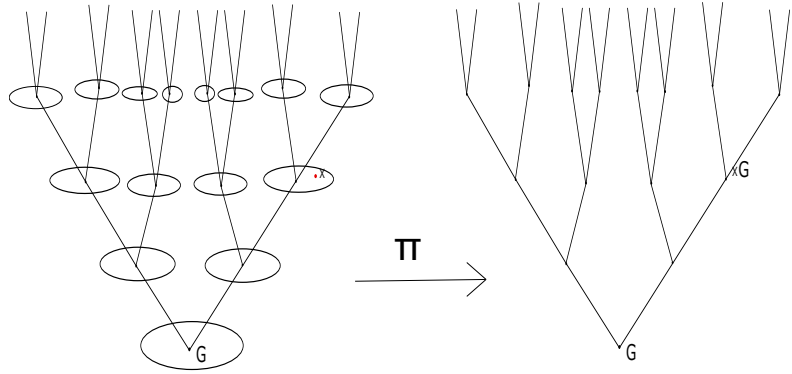}
\caption{An illustration of the projection $\pi: C(\overline{G},S) \rightarrow T$.}
\end{center}
\end{figure}

\begin{lem}\label{2.5}
The map $\pi: \overline{G} \rightarrow T$ extends to a simplicial map from the Cayley graph,
 $\pi: C(\overline{G},S) \rightarrow T$ which is 1-Lipschitz.
\end{lem}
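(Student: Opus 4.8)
The plan is to extend $\pi$ edge-by-edge over the Cayley graph $C(\overline{G},\overline{S})$ and then read off the Lipschitz bound from the fact that each edge is sent into a path of length at most one. I first recall the combinatorial description of the Bass-Serre tree $T$: its vertex set is $\overline{G}/G$, its geometric edge set is $\overline{G}/N$, and the edge $hN$ joins the vertices $hG$ and $htG$. In particular, taking $h=g$ shows that $gG$ and $gtG$ are joined by the edge $gN$, and taking $h=gt^{-1}$ shows that $gG$ and $gt^{-1}G$ are joined by the edge $gt^{-1}N$; thus $gG$ is adjacent in $T$ to each of $gtG$ and $gt^{-1}G$.

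On vertices $\pi$ is already prescribed by $\pi(g)=gG$, so it remains only to say where each edge goes. An edge of $C(\overline{G},\overline{S})$ joins $g$ to $gs$ for some $s\in\overline{S}$, and there are two cases. If $s\in S$, then $s\in G$ and hence $\pi(gs)=gsG=gG=\pi(g)$, so I collapse this edge to the single vertex $gG$. If $s\in\{t,t^{-1}\}$, then $\pi(g)=gG$ and $\pi(gs)=gsG$ are the two distinct endpoints of the edge $gN$, respectively $gt^{-1}N$, of $T$ by the previous paragraph, and I send the Cayley edge $[g,gs]$ homeomorphically onto that edge. Since endpoints are carried to endpoints in every case, this gives a well-defined simplicial map extending $\pi$.

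Finally, by construction the image of every edge of $C(\overline{G},\overline{S})$ is either a vertex or a single edge of $T$, hence a set of diameter at most one in the path metric on $T$. Consequently a geodesic of length $n=d(x,y)$ from $x$ to $y$ in $C(\overline{G},\overline{S})$ is mapped to an edge-path of length at most $n$ from $\pi(x)$ to $\pi(y)$, so $d_T(\pi(x),\pi(y))\le d(x,y)$ and $\pi$ is $1$-Lipschitz. The only step needing genuine care is the adjacency of the stable-letter edges, which is exactly the unwinding of the Bass-Serre edge description recalled above; the remaining verifications are purely formal.
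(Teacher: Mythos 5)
Your proof is correct and follows essentially the same route as the paper's: extend $\pi$ edge-by-edge, collapsing edges labelled by $s\in S$ to the vertex $gG$ and sending edges labelled by $t^{\pm 1}$ onto the corresponding edge of $T$, then deduce the 1-Lipschitz property from the fact that every edge maps to a set of diameter at most one. The only difference is that you spell out the Bass-Serre edge description (edges $\overline{G}/N$, with $hN$ joining $hG$ and $htG$) to justify the adjacency of $gG$ and $gt^{\pm 1}G$, a verification the paper leaves implicit.
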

 \begin{proof}
Let $g \in \overline{G}$ and $s \in \overline{S}$. Then the vertex $g$ is mapped to the vertex $\pi(g)=\pi(gs)=gG$.

If $s \in S$, then the edge $[g,gs]$ is mapped to the vertex $\pi(g)=\pi(gs)=gG$.

If  $s \in \lbrace t , t^{-1}\rbrace$, without loss of generality we
may assume that $s =t$, then the edge $[g,gs]$ is mapped to the edge $[\pi(g),\pi(gs)]=[gG,gtG]$ of $T$.\\

We observe that the  simplicial map $\pi: C(\overline{G}) \rightarrow T$  is 1-Lipschitz.
 \end{proof}

\begin{figure}
\begin{center} 
\includegraphics[scale=.2]{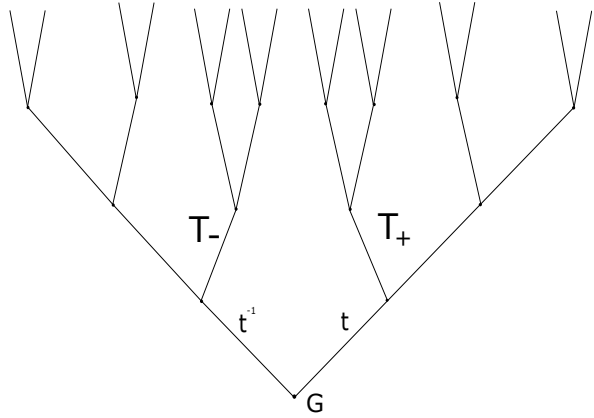}
\caption{An illustration of $T_+$ and $T_{-}$.}
\end{center}
\end{figure}

 The base vertex $G$ separates $T$ into two parts $T_{-} \setminus G$ and $T_{+} \setminus G$, where 
\begin{center}
$\pi^{-1}(T_{+})=\lbrace w \in \overline{G} \mid $ if $ w=gt^{\epsilon_{1}}s_{1}t^{\epsilon_{2}}s_{2}...t^{\epsilon_{k}}s_{k} $ is the normal form of $w$ then $ \epsilon_{1} = 1 \rbrace$
\end{center}
 and similarly
  
\begin{center}
$\pi^{-1}(T_{-})=\lbrace w \in \overline{G} \mid $ if $ w=gt^{\epsilon_{1}}s_{1}t^{\epsilon_{2}}s_{2}...t^{\epsilon_{k}}s_{k} $ is the normal form of $w$ then $ \epsilon_{1} = -1 \rbrace$.
\end{center}
We note that both $ T_{+} \setminus G$ and $T_{-} \setminus G $ are unions of connected components of $T$ and 
$ \pi^{-1}(T_{+})$ and $\pi^{-1}(T_{-}) $ are unions of connected components of $C(\overline{G})$. See figure 2 for an illustration of $T_-$ and $T_+$.\\

We consider the Bass-Serre tree $T$ as a metric space with the simplicial metric $\overline{d}$. If $Y$ is a graph, we denote by $Y^{0}$ or $V(Y)$ the vertices of $Y$.\\
For $u \in T^{0}$ we denote by $\mid\!u\!\mid$ the distance to the vertex with
label $G$.
We note that the distance of the vertex $wG$ from $G$ in the Bass-Serre tree $T$  equals to the length $l(w)$ of the normal form of $w$,  $\mid\!wG\!\mid=l(w)$. We denote by $l(w)$ the length of the normal form of $w$, we note that the length of both the right and the left normal form of $w$ is the same.\\

We recall that a \emph{full subgraph} of a graph $\Gamma$ is a subgraph formed from a subset of vertices $V$ and from all of the edges that have both endpoints in the subset $V$.\\
If $A$ is a subgraph of $\Gamma$ we define the \textit{edge closure} $E(A)$ of $A$ to be the full subgraph of $\Gamma$ formed from $V(A)$. Obviously, $V(E(A))=V(A)$.\\

We fix some notation on the Bass-Serre tree $T$ and on the Cayley graph.\\

\emph{In the tree $T$.} We denote by $B^{T}_{r}$
the $r-$ball in $T$ centered at $G$ ($r \in \mathbb{N}$).
There is a partial order on vertices of $T$ defined as follows: $v \leq u$ if and only if $v$
lies in the geodesic segment $[G,u]$ joining the base vertex $G$ with $u$. For $u \in T^{0}$ of
nonzero level (i.e. $u \neq G$) and $r > 0$ we set

\begin{center}
$T^{u}=  E(\lbrace v \in T^{0} \mid u \leq v \rbrace)$, $B^{u}_{r}=E(\lbrace v \in T^{u} \mid $ s.t. $ \mid\!v\!\mid \leq  \mid\!u\!\mid + r   \rbrace)$. 
\end{center}

For every vertex $u \in T^{0}$ represented by a coset $g_{u}G$ we have the
equality $B^{u}_{r}= g_{u}B^{T}_{r} \cap T^{u} $. We also observe that $B^{u}_{r}=E(\lbrace v \in T^{u} \mid $ s.t. $ \overline{d}(v,u) \leq  r   \rbrace)$. See figure 3 for an illustration of the sets $T^u$ and $B^u_r$\\


\begin{figure}
\centering
\begin{subfigure}{.5\textwidth}
  \centering
  \includegraphics[width=.99\linewidth]{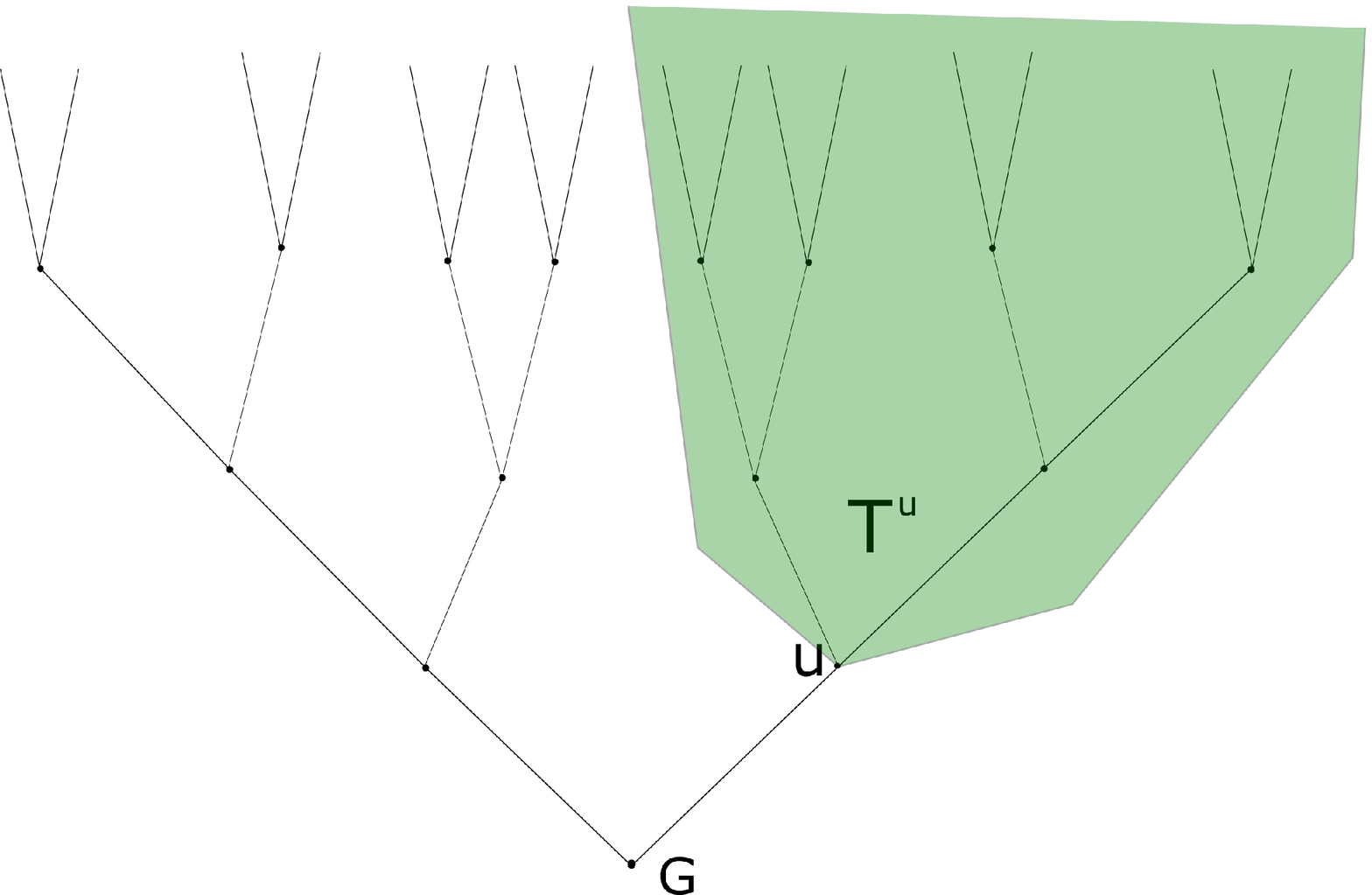}
  \caption{Here we have an illustration of $T^{u}$}
  \label{fig:sub1}
\end{subfigure}%
\begin{subfigure}{.5\textwidth}
  \centering
  \includegraphics[width=.99\linewidth]{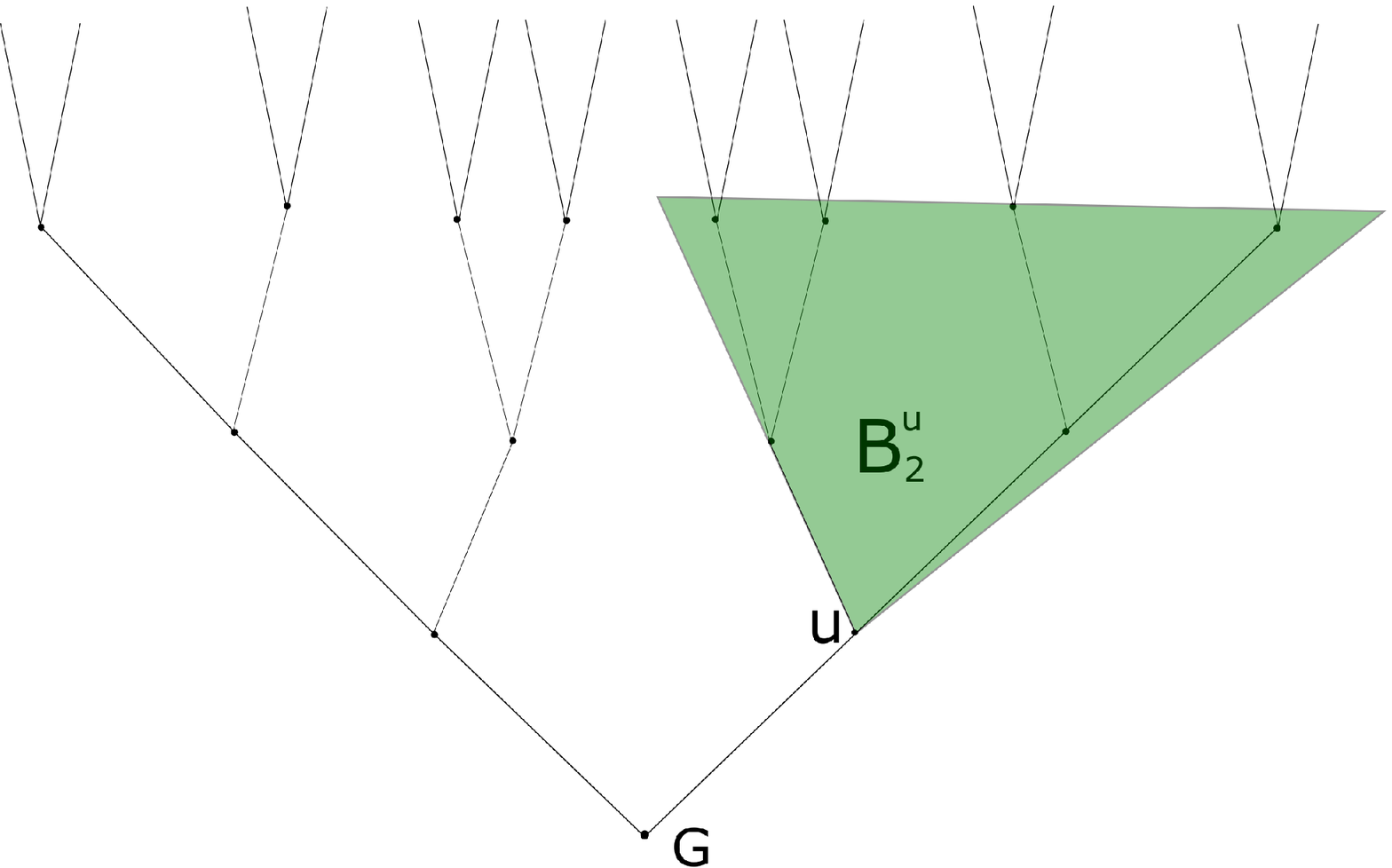}
  \caption{Here we have an illustration of $B^{u}_{r}$, where $r=2$. }
  \label{fig:sub2}
\end{subfigure}
\caption{}
\label{fig:test}
\end{figure}

\emph{In the Cayley graph.} 
For $R \in \mathbb{N}$, let $$M_{R}=  \lbrace  g \in \overline{G} \mid dist(g,N\cup\phi(N))=R \rbrace.$$ 


Let $u=g_{u}G$, we set $M_{R}^{u}=g_{u}M_{R} \cap \pi^{-1} (T^{u})$. We observe that $\pi(M^{u}_{R}) \subseteq B^{u}_{R} $ since $\pi$ is 1-Lipschitz.\\








\begin{figure}
\centering
\begin{subfigure}{.5\textwidth}
  \centering
  \includegraphics[width=.99\linewidth]{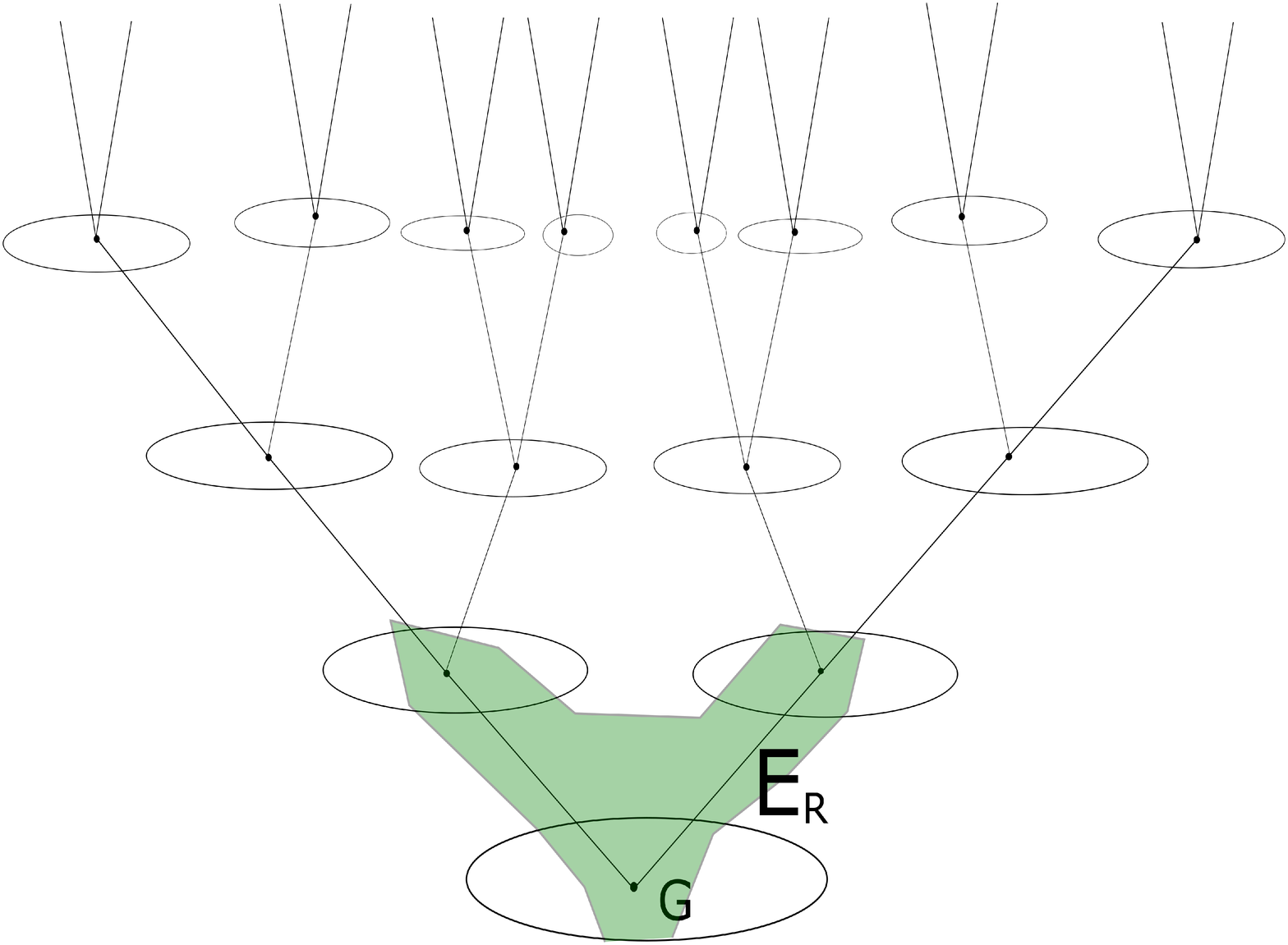}
  \caption{An illustration of $E_R$.}
  \label{fig:sub1}
\end{subfigure}%
\begin{subfigure}{.5\textwidth}
  \centering
  \includegraphics[width=.99\linewidth]{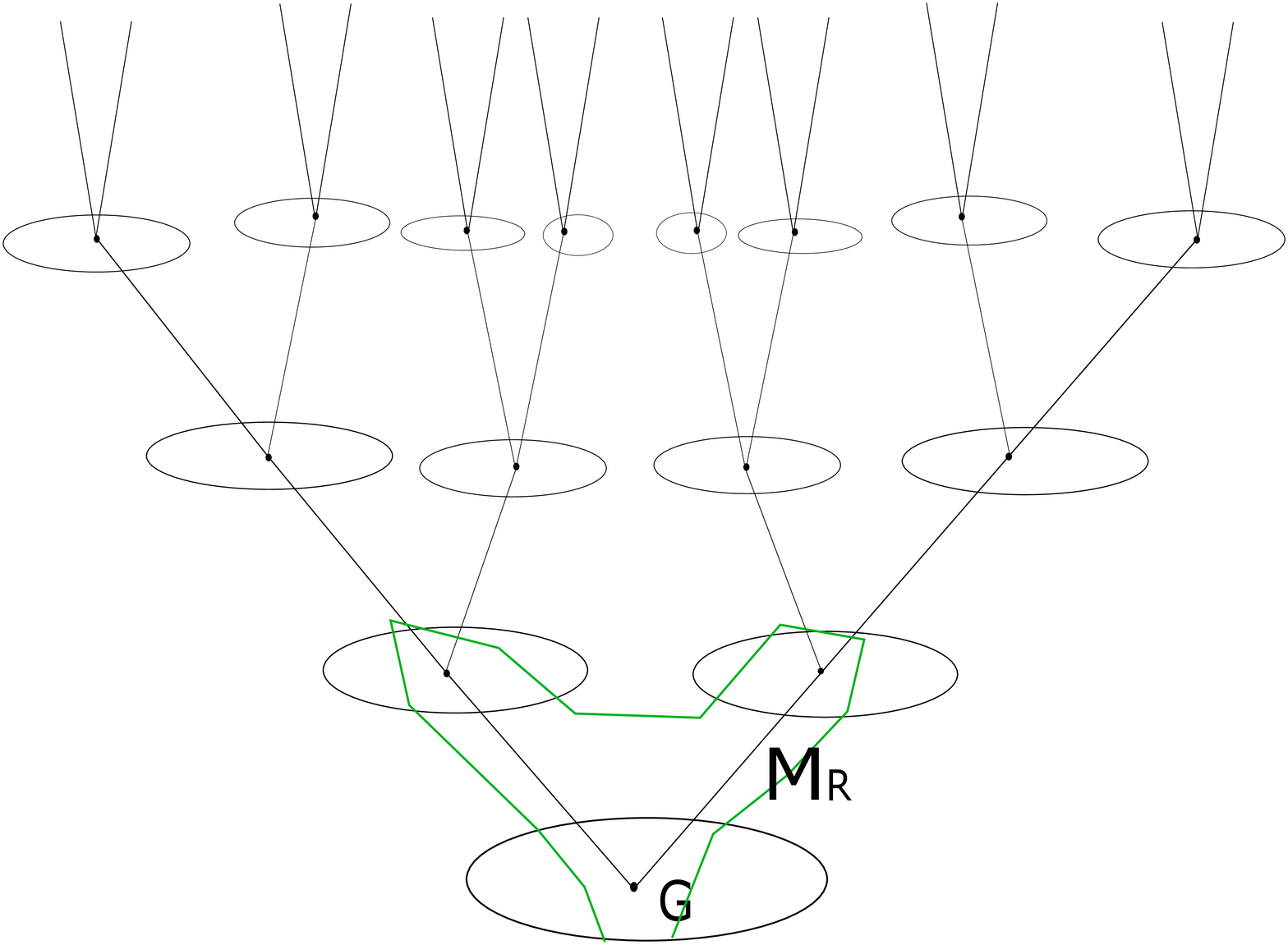}
  \caption{An illustration of $M_R$. }
  \label{fig:sub2}
\end{subfigure}
\caption{}
\label{fig:test}
\end{figure}

Let $u=g_{u}G$, we set $E_{R}=E(N_{R}(N \cup \phi(N)))$ and  $$E_{R}^{u}=g_{u}E_{R} \cap \pi^{-1} (T^{u}).$$ Obviously, $M_{R}^{u} \subseteq E_{R}^{u} \subseteq \pi^{-1}(B^{u}_{R}$).\\


\textbf{Convention:} We associate every $u \in T^{0}$ to an element $g_u \in \overline{G}$ such that the following two conditions hold:\\
(i) $u= g_u G$ .\\ 
(ii) if the left normal form of $g_u $ is $s_{1}t^{\epsilon_{1}}s_{2}t^{\epsilon_{2}}...s_{k}t^{\epsilon_{k}}g $ then $g=1_{\overline{G}}$.\\
We see that in this way we may define a bijective map from $T^{0}$ to the set $\mathcal{G}_T$ which consists of the elements of $\overline{G}$ such that conditions (i) and (ii) hold.
\begin{prop}\label{2.6}
If $4 < 4R \leq r$, and the distinct vertices $u, u^{\prime} \in T^0$, satisfy $ \mid\!u\!\mid ,  \mid\!u^{\prime}\!\mid  \in \lbrace nr \mid n \in \mathbb{N}\rbrace$ then
$$d(M_{R}^{u},M_{R}^{u^{\prime}}) \geq 2 R.$$ 
\end{prop}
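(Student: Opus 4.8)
The plan is to bound the Cayley-graph distance $d(M_R^u,M_R^{u'})$ from below by splitting on the relative tree-position of $u$ and $u'$, using the $1$-Lipschitz projection $\pi$ of Lemma \ref{2.5} for the ``easy'' configuration and a separate \emph{gate estimate} for the hard one. Recall first that $\pi(M_R^u)\subseteq B_R^u$, so every vertex of $\pi(M_R^u)$ lies in the subtree $T^u$ at tree-level between $|u|$ and $|u|+R$, and similarly for $u'$; moreover $d(M_R^u,M_R^{u'})\ge \overline{d}(\pi(M_R^u),\pi(M_R^{u'}))$ since $\pi$ is $1$-Lipschitz. If $u$ and $u'$ are comparable, say $u<u'$, then $|u|,|u'|$ are distinct multiples of $r$, so $|u'|\ge|u|+r$; computing with the nearest common ancestor shows that any $v\in\pi(M_R^u)$ and $v'\in\pi(M_R^{u'})$ satisfy $\overline{d}(v,v')\ge |v'|-|v|\ge r-R\ge 3R$, using $4R\le r$. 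Hence $d(M_R^u,M_R^{u'})\ge 2R$ in this case, purely from the tree.

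The remaining and genuinely harder case is when $u$ and $u'$ are incomparable; here the tree distance can be as small as $2$ (when $u,u'$ share a parent), so the Lipschitz bound alone is useless. Let $w$ be the nearest common ancestor, so $w<u$ and $w<u'$. The key step I would isolate is the following gate estimate: \emph{for every $g\in M_R^u$ and every tree vertex $z$ with $z<u$, one has $d(g,\pi^{-1}(z))\ge R$.} Granting this, the conclusion follows quickly: in $C(\overline{G})$ the coset $\pi^{-1}(w)=g_wG$ separates $\pi^{-1}(T^u)$ from $\pi^{-1}(T^{u'})$, because $w$ separates $T^u$ from $T^{u'}$ in $T$ and $\pi$ is simplicial. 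Thus any geodesic from $g\in M_R^u$ to $g'\in M_R^{u'}$ contains a vertex $a\in\pi^{-1}(w)$, whence $d(g,g')=d(g,a)+d(a,g')\ge d(g,\pi^{-1}(w))+d(g',\pi^{-1}(w))\ge R+R=2R$, applying the gate estimate to $g$ (with $w<u$) and to $g'$ (with $w<u'$).

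It then remains to prove the gate estimate, which is where the combinatorics of the HNN-extension enters. Since any path from $g\in\pi^{-1}(T^u)$ to $\pi^{-1}(z)$ with $z<u$ must cross the edge of $T$ joining $u$ to its parent $p$, it suffices to treat $z=p$, because $d(g,\pi^{-1}(z))\ge d(g,\pi^{-1}(p))$. Writing $g=g_ux$ with $x\in M_R$, I would first identify the gate $\Gamma\subseteq\pi^{-1}(u)=g_uG$, namely the vertices of $g_uG$ adjacent to $\pi^{-1}(p)$: from $g_u=g_pt^{\epsilon}$ and the defining relation $t^{-1}nt=\phi(n)$, this gate is the coset $g_u\,\phi(N)$ when $\epsilon=1$ and $g_u\,N$ when $\epsilon=-1$. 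A geodesic from $g$ to $\pi^{-1}(p)$ reaches $\pi^{-1}(p)$ only after passing through such a gate vertex $g_u\gamma$ and crossing one further edge, so $d(g,\pi^{-1}(p))\ge d(g,g_u\gamma)+1\ge d(x,\phi(N))+1$ (resp. $d(x,N)+1$). Finally $x\in M_R$ means $\mathrm{dist}(x,N\cup\phi(N))=R$, so both $d(x,N)\ge R$ and $d(x,\phi(N))\ge R$, giving $d(g,\pi^{-1}(p))\ge R+1\ge R$.

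The main obstacle is precisely this gate estimate: identifying the correct edge-group coset inside $\pi^{-1}(u)$ that one is forced through when leaving the subtree $T^u$, and checking that this forces a detour of length at least $R$ through a translate of $N\cup\phi(N)$. Once that local picture is pinned down, the separating-coset argument and the Lipschitz estimate assemble into the claimed bound, and in fact yield the slightly stronger $d(M_R^u,M_R^{u'})\ge 2R+2$; the level hypothesis $|u|,|u'|\in\{nr\}$ together with $4R\le r$ is needed only to handle the comparable case via the tree.
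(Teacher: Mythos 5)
Your proof is correct and follows essentially the same route as the paper's: a $1$-Lipschitz projection bound to the Bass--Serre tree when the level hypothesis forces the two regions far apart, and, in the branching case, the observation that any geodesic between $M_R^u$ and $M_R^{u'}$ must pass through the nearest common ancestor's fiber and hence through the cosets $g_u(N\cup\phi(N))$ and $g_{u'}(N\cup\phi(N))$, each at distance at least $R$ from the respective set. The only differences are organizational: you split the cases by comparability of $u,u'$ rather than by equality of levels $\mid\!u\!\mid=\mid\!u'\!\mid$, and you prove the ``gate'' step in explicit detail (identifying the gate coset $g_u\phi(N)$ or $g_uN$), whereas the paper asserts in one line that the geodesic intersects $g_u(N\cup\phi(N))$.
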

\begin{proof}
We distinguish two cases. See figure 5(a) and figure 5(b) for the case 1 and case 2 respectively.

\begin{figure}
\centering
\begin{subfigure}{.5\textwidth}
  \centering
  \includegraphics[width=.99\linewidth]{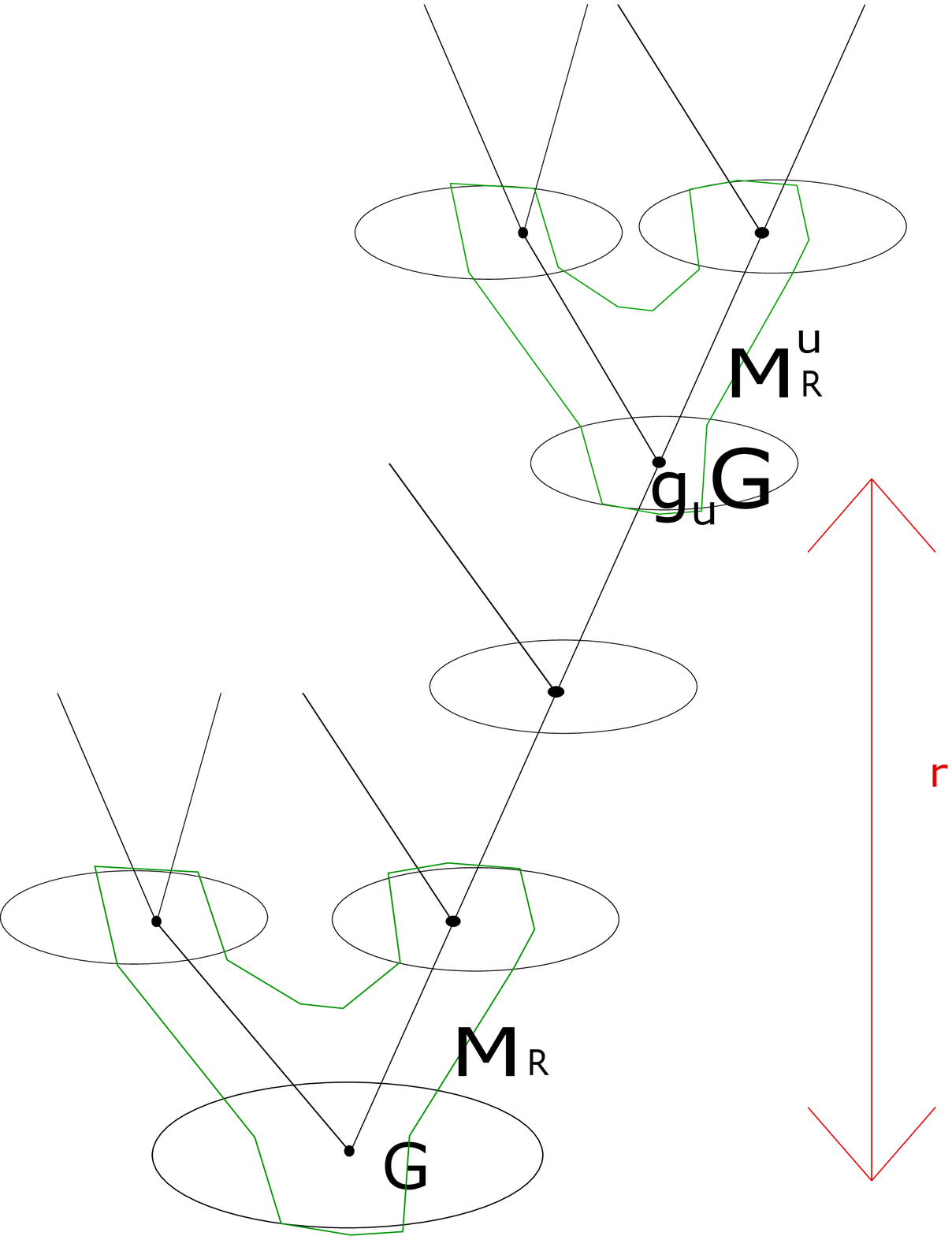}
  \caption{An illustration of case 1 of proposition \ref{2.6}.}
  \label{fig:sub1}
\end{subfigure}%
\begin{subfigure}{.5\textwidth}
  \centering
  \includegraphics[width=1.1\linewidth]{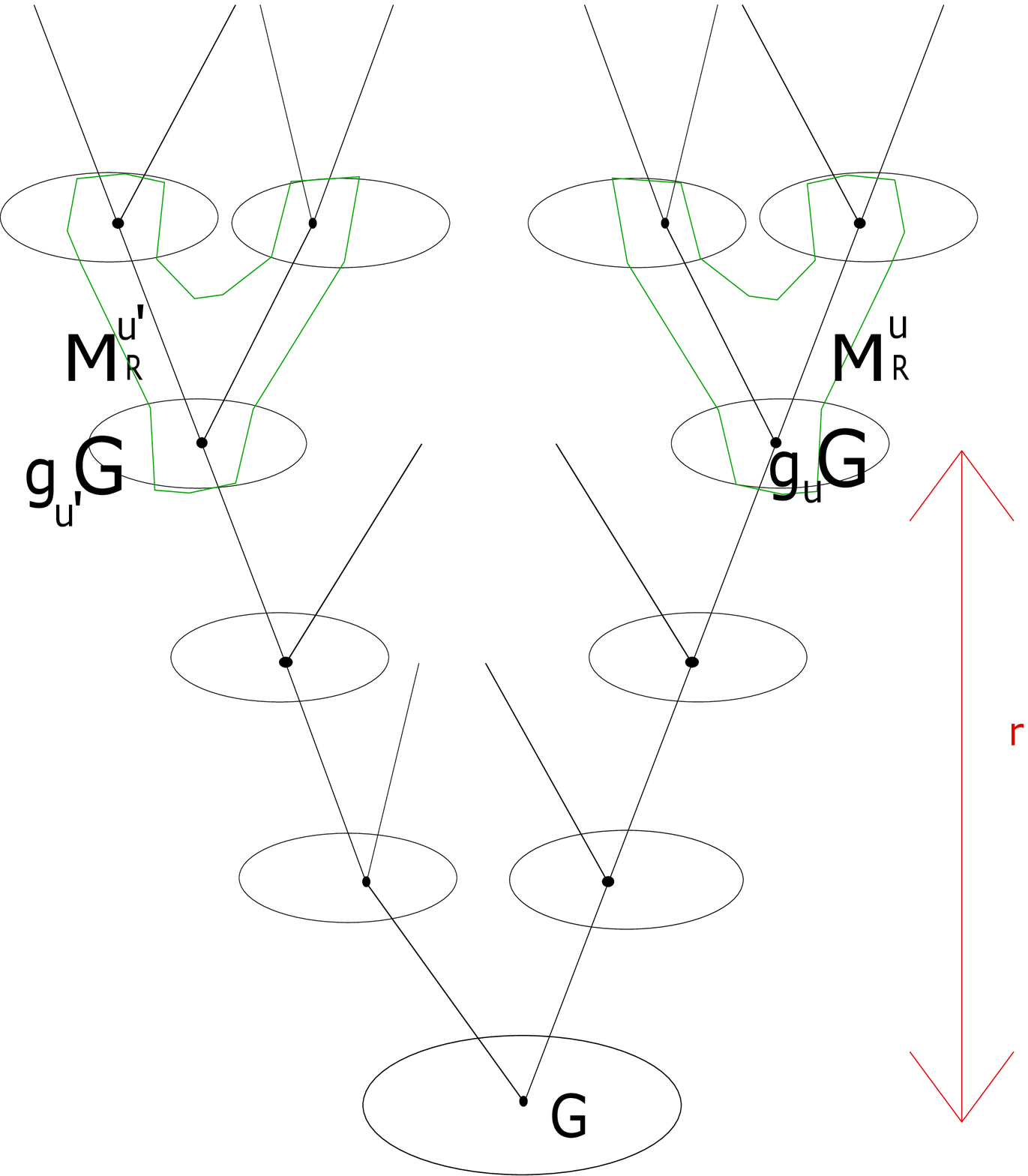}
  \caption{An illustration of case 2 of proposition \ref{2.6}. }
  \label{fig:sub2}
\end{subfigure}
\caption{In figure (a) we have $u^{\prime}=G$}
\label{fig:test}
\end{figure}

\textbf{Case 1:} $\mid\!u\!\mid \neq \mid\!u^{\prime}\!\mid$. We recall that every path $\gamma$ in $C(\overline{G})$ projects to a path $\pi(\gamma)$ in the tree $T$.
Then since 
\begin{center}
$M_{R}^{u}=g_{u}M_{R}\cap \pi^{-1}(T^{u})
 \subseteq \pi^{-1}(B_{R}^{u})$,
\end{center} 
 \begin{center}
  $M_{R}^{u^{\prime}}=g_{u^{\prime}}M_{R} \cap \pi^{-1}(T^{u^{\prime}})
 \subseteq \pi^{-1}(B_{R}^{u^{\prime}})$
 
  \end{center} and $\pi$ is 1-Lipschitz we have that  

\begin{center}
 $d(M^{u}_{R}
,M^{u^{\prime}}_{R}) \geq \overline{d}(B_{R}^{u},B_{R}^{u^{\prime}})
) \geq r - R \geq 3R.$
 \end{center}

\textbf{Case 2:} $\mid\!u\!\mid = \mid\!u^{\prime}\!\mid$ ($u \neq u^{\prime}$). We denote by $\zeta_{0}$ the last vertex of the common geodesic segment $[G,\zeta_{0}]$ of the geodesics $[G,u]$ and $[G,u^{\prime}]$. We observe that $\overline{d}(u,\zeta_{0}),\overline{d}(u^{\prime},\zeta_{0}) \geq 1$.\\
Let $x \in M_{R}^{u}$, $y \in M_{R}^{u^{\prime}}$ and let $\gamma$ be a geodesic from $x$ to $y$. Then the path $\pi(\gamma)$ passes through the vertices $u$, $u^{\prime}$ and $\zeta_{0}$. So the geodesic $\gamma$ intersects both $g_{u}(N \cup \phi(N))$ and $g_{u^{\prime}}(N \cup \phi(N))$.
Hence
\begin{center}
$d(x,y) \geq dist(x,g_{u}(N \cup \phi(N)) ) + dist(y,g_{u^{\prime}}(N \cup \phi(N)) )+ length([\zeta_{0},u^{\prime}]) + length([\zeta_{0},u]) \geq R+R+2=2(R+1).$
\end{center}
\end{proof}



For $w \in G\ast_{N}$, we denote by $\parallel\!w\!\parallel$ the distance from $w$ to $1_{\overline{G}}$ in the Cayley graph $Cay(\overline{G},\overline{S})$.

\begin{figure}
\centering
\begin{subfigure}{.5\textwidth}
  \centering
  \includegraphics[width=.9\linewidth]{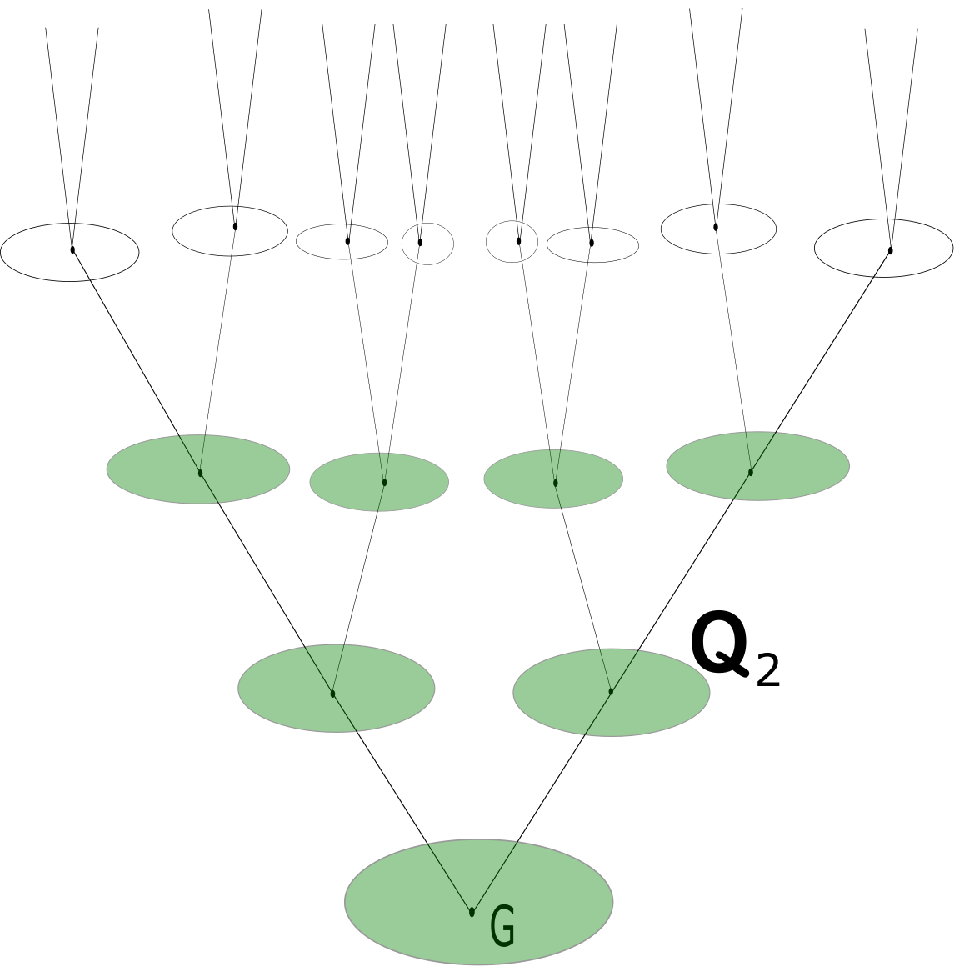}
  \caption{An illustration of $Q_m$, for $m=2$ (proposition \ref{2.8}).}
  \label{fig:sub1}
\end{subfigure}%
\begin{subfigure}{.5\textwidth}
  \centering
  \includegraphics[width=1\linewidth]{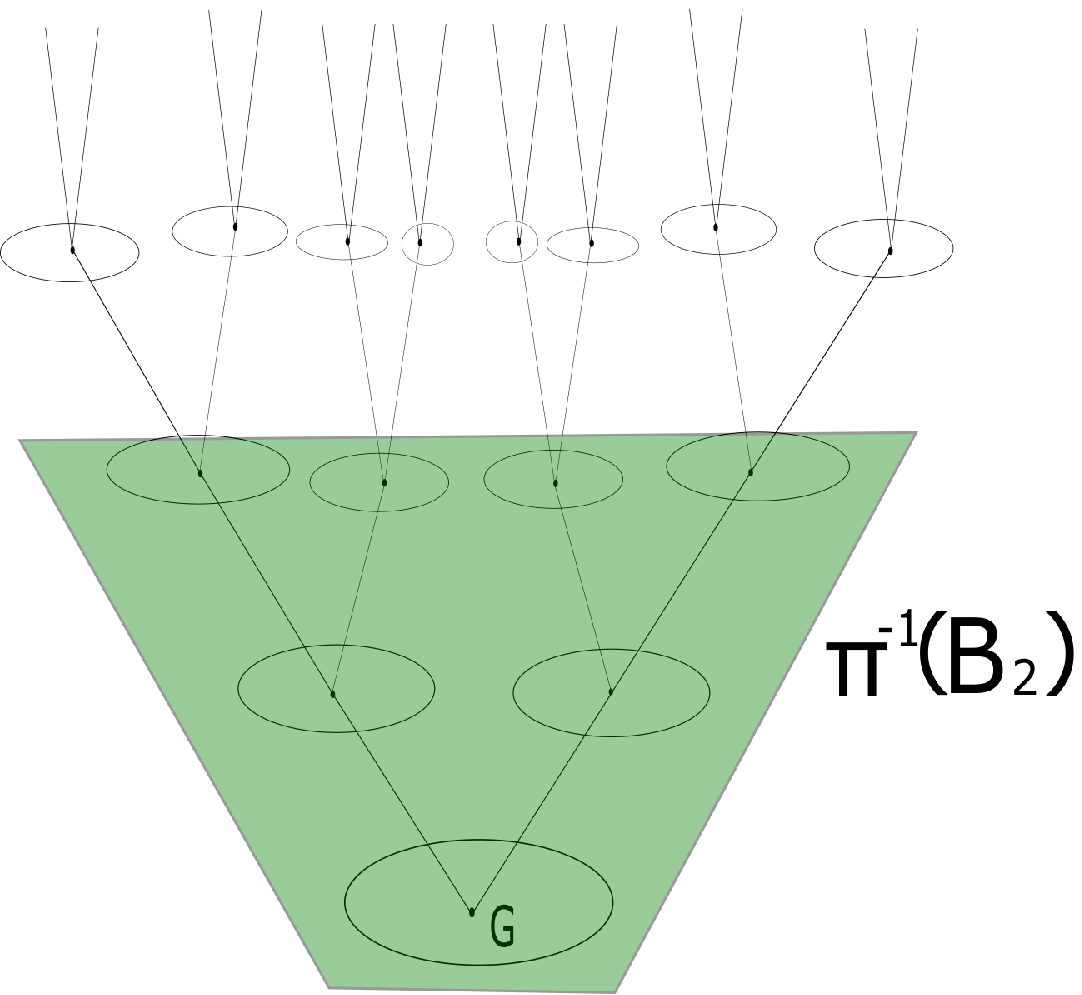}
  \caption{An illustration of $\pi^{-1}(B^{}_{r})$, where $r=2$.}
  \label{fig:sub2}
\end{subfigure}
\caption{We note that $Q_m= V(\pi^{-1}(B^{}_{r})).$}
\label{fig:test}
\end{figure}

\begin{lem}\label{2.7}
Let $w=gt^{\epsilon_{1}}s_{1}t^{\epsilon_{2}}s_{2}...t^{\epsilon_{k}}s_{k} $ be the normal form of $w$. Then

\begin{center}
$\parallel\!w\!\parallel \geq d(s_{k},N)$ if $\epsilon_{k}=1$ and $\parallel\!w\!\parallel \geq d(s_{k},\phi(N))$ if $\epsilon_{k}=-1$.
\end{center}
 
\end{lem}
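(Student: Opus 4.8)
The plan is to argue via a geodesic in the Cayley graph and its projection to the Bass--Serre tree $T$ under the $1$-Lipschitz simplicial map $\pi$ of Lemma \ref{2.5}. By symmetry it suffices to treat the case $\epsilon_k=1$; the case $\epsilon_k=-1$ is handled identically after exchanging the roles of $t$ and $t^{-1}$ and of $N$ and $\phi(N)$. Write $w=w' t s_k$ with $w'=gt^{\epsilon_1}s_1\cdots t^{\epsilon_{k-1}}s_{k-1}$, whose normal form is the truncation of that of $w$, so that the target vertex is $u=\pi(w)=wG=w'tG$, lying at tree-distance $k$ from the base vertex $G$, while its parent on the geodesic $[G,u]$ is $w'G$, at distance $k-1$. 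Fix a geodesic $\gamma=(1=x_0,x_1,\dots,x_m=w)$ in $C(\overline{G})$. Since $\pi\circ\gamma$ is an edge-path in $T$ from $G$ to $u$, it must traverse the final edge $e=[w'G,u]$; I would select the \emph{last} index $j$ for which $[x_j,x_{j+1}]$ is a $t$-edge projecting onto $e$ with $\pi(x_j)=w'G$ and $\pi(x_{j+1})=u$.

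The heart of the argument is then a short coset computation. Because $\pi(x_j)=w'G$ we may write $x_j=w'g_0$ with $g_0\in G$, and because the $t$-edge $[x_j,x_j t]$ projects onto $e$ we get $\pi(w'g_0t)=w'tG$, i.e. $t^{-1}g_0t\in G$; by the HNN relations $t n t^{-1}=\phi(n)$ this forces $g_0\in\phi(N)$, say $g_0=\phi(n)$ with $n\in N$. Setting $x_{j+1}=x_j t=w'\phi(n)t$ and using $t^{-1}\phi(n)t=n$, one computes
\[
x_{j+1}^{-1}w=t^{-1}\phi(n)^{-1}\,(w')^{-1}\,w' t s_k=t^{-1}\phi(n)^{-1}t\,s_k=n^{-1}s_k\in G .
\]
Hence $d(x_{j+1},w)=\|n^{-1}s_k\|=d(n,s_k)\ge d(N,s_k)=d(s_k,N)$. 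Since $x_{j+1}$ is a vertex of the geodesic $\gamma$, we have $\|w\|=d(1,x_{j+1})+d(x_{j+1},w)\ge d(x_{j+1},w)\ge d(s_k,N)$, which is the claim.

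The step I expect to require the most care is the verification that the last crossing point $x_j$ genuinely lies in the coset $w'\phi(N)$ (and the parallel statement $x_j\in w'N$ when $\epsilon_k=-1$). This rests on two points that must be pinned down precisely: first, that Lemma \ref{2.5} makes $\pi\circ\gamma$ an honest edge-path in the tree, so that ``crossing $e$'' is realized by an actual $t$-edge of $\gamma$ rather than merely by the two endpoints lying on opposite sides of $e$; and second, that the conjugation bookkeeping with the chosen coset representatives and the relation $t n t^{-1}=\phi(n)$ indeed places $x_{j+1}^{-1}w$ inside $G$ in the coset $N s_k$ (respectively $\phi(N)s_k$). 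Once these are settled, the inequality $\|w\|\ge d(x_{j+1},w)$ is immediate from $x_{j+1}$ lying on a geodesic, and no appeal to the finer structure of $\pi^{-1}(T^u)$ is needed.
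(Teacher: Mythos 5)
Your proof is correct, but it takes a genuinely different route from the paper's. The paper's own argument is purely combinatorial: it takes a \emph{shortest} word representing $w$ in the alphabet $\overline{S}$, parses it into maximal $G$-blocks separated by stable letters, observes that the first step of the right normal-form rewriting expresses the last block $g_k$ as $ns_k$ with $n\in N$, and concludes $\parallel\!w\!\parallel \geq \parallel\!g_k\!\parallel = \parallel\!ns_k\!\parallel \geq d(s_k,N)$. That parsing tacitly assumes the geodesic word is Britton-reduced with the same stable-letter pattern $t^{\epsilon_1},\dots,t^{\epsilon_k}$ as the normal form, a point that genuinely needs justification: geodesic words in HNN-extensions need not be Britton-reduced when $N$ is distorted (e.g.\ in $\langle a,t \mid tat^{-1}=a^2\rangle$ one has $a^{2^m}=t^mat^{-m}$, so geodesics for elements of $G$ contain stable letters and pinches). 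Your tree-projection argument avoids any claim about the internal structure of the geodesic: you use only that $\pi$ is simplicial and $1$-Lipschitz, that the projected walk must cross the last edge $e=[w'G,u]$ of $[G,u]$ by tree separation, and that the crossing vertex $x_{j+1}$ then lies in the coset $w'tN$, whence $d(x_{j+1},w)=\parallel\!n^{-1}s_k\!\parallel\geq d(s_k,N)$ by left-invariance; this is the same separation idea that powers Proposition \ref{2.6} in the paper. The one point you flag but leave open --- that the crossing of $e$ from the $w'G$-side to the $u$-side is realized by a positively oriented $t$-edge --- closes easily: if instead $x_{j+1}=x_jt^{-1}$ with $\pi(x_j)=w'G$ and $\pi(x_{j+1})=w'tG$, then writing $x_j=w'g_0$ with $g_0\in G$ gives $t^{-1}g_0t^{-1}\in G$, which is impossible by Britton's lemma since a word whose two stable letters have the same sign contains no pinch. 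With that detail supplied your argument is complete, and it is arguably more robust than the paper's, at the cost of being longer and invoking the Bass--Serre machinery of Lemma \ref{2.5}.
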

\begin{proof}
Without loss of generality we assume that $\epsilon_{k}=1$. Let $$w=(\prod_{i_{0}=1}^{m_{0}}s_{i_{0}})t^{\epsilon_{1}}(\prod_{i_{1}=1}^{m_{1}}s_{i_{1}})t^{\epsilon_{2}}...t(\prod_{i_{k}=1}^{m_{k}}s_{i_{k}})$$ be a shortest presentation of $w$ in
the alphabet $\overline{S}$ (we note that $s_{i_{j}} \notin \lbrace t, t^{-1} \rbrace$). We set $\prod_{i_{j}=1}^{m_{j}}s_{i_{j}}=g_{j}$ for every $j \in \lbrace 1,...,k  \rbrace.$ Then $w=gt^{\epsilon_{1}}g_{1}t^{\epsilon_{2}}s_{2}...tg_{k} = w_{0}tg_{k}$.      \\
The first step when we rewrite $w$ in normal form starting from the previous presentation is to write $g_{k}=ns_{k}$ (where $n \in N$). Then $$\parallel\!w\!\parallel \geq \parallel\!g_{k}\!\parallel = \parallel\!ns_{k}\!\parallel =d(ns_{k},1) =d(s_{k},n^{-1}) \geq d(s_{k},N).$$
\end{proof}

We note that there exists an amalgamated product analogue of the following proposition proved by A.Dranishnikov in \cite{Dra08}.

\begin{prop}\label{2.8}
 Suppose that $asdim\,G \leq n$. Let 
 \begin{center}
 $Q_{m}= \lbrace w \in \overline{G} \mid  w=gt^{\epsilon_{1}}s_{1}t^{\epsilon_{2}}s_{2}...t^{\epsilon_{m}}s_{m} $ is the normal form of $w$ $ \rbrace.$
 \end{center} 
 
Then $asdim Q_{m} \leq n $, for every $m \in \mathbb{N}$.
\end{prop}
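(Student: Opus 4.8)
The plan is to induct on $m$, building $Q_m$ out of the fibers of the projection $\pi$ by means of the Infinite Union Theorem (Theorem \ref{2.2}). I would write $Q_m=\bigcup_{u}\pi^{-1}(u)$, the union over the vertices $u\in T^{0}$ of level $m$ (those with $|u|=m$): each $\pi^{-1}(u)$ is a single fiber, and $Q_m$ is their disjoint union. The two things to feed into Theorem \ref{2.2} are that the fibers have $\asdim\le n$ uniformly, and that for every $r$ there is a bad set $Y_r$ of asymptotic dimension $\le n$ off which distinct fibers are $r$-separated.

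For the base case $m=0$ one has $Q_0=G$ sitting inside $C(\overline{G})$. The subtle point is that $G$ may be badly distorted in $\overline{G}$, so the restricted metric need not be quasi-isometric to the word metric $d_S$. However, since $\overline{G}$ is finitely generated the subgroup $G$ has finite distortion, and this makes the identity map $(G,d_S)\to (G,d_{\overline{G}}|_G)$ a coarse equivalence. As asymptotic dimension is a coarse invariant, $\asdim Q_0=\asdim(G,d_S)\le n$. This is the observation that lets me ignore distortion everywhere else.

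In the inductive step I assume $\asdim Q_{m-1}\le n$. First, each fiber $\pi^{-1}(u)=g_uG$ is carried isometrically onto $Q_0$ by left translation by $g_u^{-1}$, so $\{\pi^{-1}(u)\}$ satisfies $\asdim\le n$ uniformly. Given $r$, I set $R\approx r/2$ and take $Y_r$ to be the union over level-$m$ vertices $u$ of the $R$-neighbourhood of the coset along which $\pi^{-1}(u)$ is glued to its parent fiber $\pi^{-1}(p_u)\subseteq Q_{m-1}$; concretely this is the translate of $N$ or of $\phi(N)$ singled out by the left-normal-form convention. Two facts then close the step. For separation, a geodesic joining two different level-$m$ fibers projects in $T$ to a path through their common ancestor, hence must traverse each fiber's gluing coset; this is exactly the same-level computation of Proposition \ref{2.6} (via Lemma \ref{2.7}), giving $d(\pi^{-1}(u)\setminus Y_r,\pi^{-1}(u')\setminus Y_r)\ge 2R\ge r$. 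For the dimension of the bad set, every such gluing coset lies within distance $1$ of $\pi^{-1}(p_u)\subseteq Q_{m-1}$, so $Y_r\subseteq N_{R+1}(Q_{m-1})$; since a bounded neighbourhood does not change asymptotic dimension, $\asdim Y_r\le \asdim Q_{m-1}\le n$. The Infinite Union Theorem then yields $\asdim Q_m\le n$.

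The step I expect to be the main obstacle is pinning down $Y_r$ precisely enough that the induction actually closes: one must remove only the parent-side gluing coset, which sits next to $Q_{m-1}$, and not the child-side cosets, which sit next to $Q_{m+1}$ and would destroy the bound $\asdim Y_r\le n$. This is exactly where the left-normal-form convention and the length estimate of Lemma \ref{2.7} are needed, to decide which of $g_uN$ and $g_u\phi(N)$ points toward the root. The base-case distortion issue is the other genuinely non-formal point, but it is dispatched once one notices that finite distortion gives a coarse equivalence and that $\asdim$ is a coarse invariant.
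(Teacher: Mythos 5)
Your proposal is correct and is essentially the paper's own argument: the paper also inducts on the normal-form length, decomposes each level into the $G$-cosets $xtG$ (resp.\ $xt^{-1}G$) with $x$ in the previous level, applies the Infinite Union Theorem with $Y_r = P_{\lambda-1}tN_r(N)$ --- exactly your neighbourhood of the parent-side gluing cosets, which likewise lies in a bounded neighbourhood of the previous level --- and gets the separation estimate from Lemma \ref{2.7}. The only differences are cosmetic (you phrase the fibers and parent cosets via the Bass--Serre tree and cite the Proposition \ref{2.6}-style crossing argument, while the paper splits into the $t$ and $t^{-1}$ cases and computes with normal forms directly), plus your welcome explicit remark that the induced metric on $G$ is coarsely equivalent to its word metric, which the paper uses implicitly.
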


\begin{proof}
We set $P_{\lambda}= \lbrace w \in \overline{G} \mid l(w)= \lambda \rbrace$. To prove the statement of the proposition it is enough to show that $asdimP_{\lambda} \leq n$, for every $\lambda \in \mathbb{N}$. Indeed, since 

\begin{center}
 $Q_{m}= \cup _{i=1}^{m}P_{i}$ 
\end{center}

by the Finite Union Theorem we obtain that $asdimQ_{m} \leq n$.\\

\textbf{Claim:} For $\lambda \in \mathbb{N}$ we have $asdimP_{\lambda} \leq n.$\\
\textit{Proof of claim}: We use induction on $\lambda$. We have $P_{0}=G$, so $asdimP_{0} \leq n$. We observe that $P_{\lambda} \subseteq P_{\lambda-1}tG   \cup  P_{\lambda-1}t^{-1}G$. Using the Finite Union Theorem it suffices to show that $asdim(P_{\lambda} \cap   P_{\lambda-1}tG)  \leq n$ and   $asdim(P_{\lambda} \cap P_{\lambda-1}t^{-1}G)  \leq n$, we show the first.\\

To show that $asdimP_{\lambda} \cap P_{\lambda-1}tG  \leq n$ we use the Infinite Union Theorem. For $r>0$ we set $Y_{r}=P_{\lambda-1}tN_{r}(N)$. We claim that $$Y_{r} \subseteq N_{r+1}(P_{\lambda-1}).$$ Indeed, if $z \in Y_{r}$  then $z=z_{0}tz_{1}$, where $z_{0} \in P_{\lambda-1}$ and $z_{1} \in N_{r}(N)$. Since $z_{1} \in N_{r}(N)$ there exists $n \in N$ with $ d(n,z_{1}) \leq $ r.\\
So $z=z_{0}tnn^{-1}z_{1}= z_{0}\phi(n)tn^{-1}z_{1}$, and $$d(z,P_{\lambda-1}) \leq d(z, z_{0}\phi(n)) = \parallel\!tn^{-1}z_{1}\!\parallel \leq \parallel\!t\!\parallel + \parallel\!t^{-1}z_{1}\!\parallel \leq 1 + r.$$

Hence $Y_{r} \sim_{q.i.} P_{\lambda-1}$, so $asdimY_{r}\leq n$.\\

We consider the family $xtG$ where $x \in  P_{\lambda-1}$. For $xtG \neq ytG$, we have $d(xtG \setminus Y_{r}, ytG \setminus Y_{r}) = d(xtg,yth) =   \parallel\!g^{-1}t^{-1}x^{-1}yth\!\parallel $, where $g, h \in G \setminus N_{r}(N)$. The first step when we rewrite $g^{-1}t^{-1}x^{-1}yth$ in normal form is to replace $h=ns_{k}$, where $n \in N$ and $s_{k} \in S_{N}$, so $ g^{-1}t^{-1}x^{-1}yth = g^{-1}t^{-1}x^{-1}y\phi(n)ts_{k}$.\\
Since $h \in G \setminus N_{r}(N)$ we have that $\Vert\!s_{k}\!\Vert =\Vert\!n^{-1}h\!\Vert \geq d(h,N) \geq r$.

By lemma \ref{2.7} we obtain that $\parallel\!g^{-1}t^{-1}x^{-1}y\phi(n)ts_{k}\!\parallel \geq \parallel\!s_{k}\!\parallel \geq r$.\\ 

Finally, by observing that $xtG$ and $G $ are isometric we deduce that $asdim(xtG) \leq n$ uniformly. Since all the conditions of the Infinite Union Theorem hold we have that  $$asdim(P_{\lambda} \cap   P_{\lambda-1}tG)  \leq n$$ for every $\lambda \in \mathbb{N}.$

\end{proof}

We observe that $E(Q_{m}) = \pi^{-1}(B^{T}_{m})$  and $ Q_{m} = \overline{G} \cap \pi^{-1}(B^{T}_{m})$.\\


For $w \in \overline{G}$, we set $T^{w} = T^{\pi(w)} $, where $\pi(w)=wG$.

\begin{thm}\label{2.9}
Let $G \ast_{N}$ be an HNN-extension of the finitely generated group $G$ over $N$. We have the following inequality

\begin{center}
$asdim\,G \ast_{N}  \leq max \lbrace asdim G, asdimN +1 \rbrace.$
\end{center}

\end{thm}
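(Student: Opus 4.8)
The plan is to apply the Partition Theorem (Theorem~\ref{2.4}) to the geodesic Cayley graph $C(\overline{G})$ with target dimension $n=\max\{\asdim G,\ \asdim N+1\}$. Fix $R>0$ as in that theorem, assume $R>1$, and choose an integer $r$ with $r\geq 4R$, so that the separation estimate of Proposition~\ref{2.6} becomes available. The idea is to cut the Bass--Serre tree $T$ along the vertices lying at the ``cut-levels'' $r\mathbb{N}=\{0,r,2r,\dots\}$ and to pull this decomposition back through the $1$-Lipschitz projection $\pi$ to obtain a partition of the Cayley graph into slabs of tree-depth $r$.

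Concretely, for each $u\in T^{0}$ with $\mid\!u\!\mid\in r\mathbb{N}$ I would take $W_{u}$ to be (the edge closure of) the set of elements $w$ whose geodesic $[G,\pi(w)]$ passes through $u$ and satisfies $\mid\!u\!\mid\leq\mid\!\pi(w)\!\mid\leq\mid\!u\!\mid+r$, so that $\bigcup_{u}W_{u}=C(\overline{G})$ and the interiors are essentially disjoint. Since $\pi$ is equivariant, left translation by $g_{u}^{-1}$ is an isometry of $C(\overline{G})$ carrying $W_{u}$ into $\pi^{-1}(B^{T}_{r})=E(Q_{r})$. Hence $\asdim W_{u}\leq\asdim Q_{r}\leq\asdim G\leq n$, and this bound is uniform in $u$ by Proposition~\ref{2.8}. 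This verifies the first hypothesis of the Partition Theorem.

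The substance of the argument is the boundary estimate. Crossing from the slab above a cut-level vertex $u'$ (with $\mid\!u'\!\mid\in r\mathbb{N}$) to the slab below it forces a $t^{\pm1}$-letter, so the pinch between adjacent slabs occurs exactly along the interface cosets $g_{u'}(N\cup\phi(N))$; cutting with an $R$-collar makes the topological boundary $\partial W_{u}$ land on the spheres $M_{R}^{u'}$. Thus, up to a uniformly bounded neighborhood, $\bigcup_{u}\partial W_{u}\subseteq\bigcup_{u':\,\mid u'\mid\in r\mathbb{N}}M_{R}^{u'}$. Each $M_{R}^{u'}$ is a translate of a subset of $M_{R}\subseteq N_{R}(N\cup\phi(N))$, which is quasi-isometric to $N\cup\phi(N)$ and so has asymptotic dimension at most $\asdim N\leq n-1$; it therefore admits a uniformly $d$-bounded cover of order $\leq n$ with Lebesgue number $>R$. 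Proposition~\ref{2.6} is the decisive point: since $d(M_{R}^{u'},M_{R}^{u''})\geq 2R$ for distinct cut-level vertices, any subset of diameter $\leq R$ meets at most one cluster $M_{R}^{u'}$, so the disjoint union of the individual covers is a single $(R,d)$-cover of $\bigcup_{u'}M_{R}^{u'}$ whose order is still $\leq n$. This gives $(R,d)\text{-}dim\!\left(\bigcup_{u}\partial W_{u}\right)\leq n-1$, the second hypothesis of Theorem~\ref{2.4}, and the Partition Theorem then yields $\asdim\overline{G}\leq n$.

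I expect the boundary estimate to be the main obstacle, and it splits into three parts that must be made precise: (i) choosing the $R$-collar so that $\{W_{u}\}$ is a genuine partition of a geodesic space whose boundary is actually captured by the spheres $M_{R}^{u'}$; (ii) establishing the uniform quasi-isometry between each cluster and $N\cup\phi(N)$ that produces $(R,d)$-covers of order $\leq n$ with a single bound $d$ independent of $u'$; and (iii) invoking the $2R$-separation of Proposition~\ref{2.6} to amalgamate these covers without inflating the order. The dimension bookkeeping is exactly what makes both hypotheses close: $\asdim G\leq n$ controls the slabs, while $\asdim N\leq n-1$ controls the boundary.
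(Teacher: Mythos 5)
Your proposal is correct and takes essentially the same route as the paper's own proof: the paper likewise applies the Partition Theorem to slabs $V^{u}_{r}$ cut along the tree-levels $r\mathbb{N}$, built with $R$-collars so that adjacent slabs meet exactly in the spheres $M^{u}_{R}$, then gets the uniform slab bound from Proposition \ref{2.8} (via $V_{r}\subseteq N_{1}(Q_{r+R})$ and translation) and the boundary bound by amalgamating translated $(R,d)$-covers of $M_{R}$ using the $2R$-separation of Proposition \ref{2.6}. The collar construction you defer in your point (i) is carried out in the paper through the sets $U_{r}$, $V_{r}$ and the removal of the edge-interiors $A_{R}$, with the minor bookkeeping addition that the base collar $E(N_{R}(N\cup\phi(N)))$ appears as one extra piece of the partition.
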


\begin{proof} Let $n= max \lbrace asdim G, asdimN +1 \rbrace$. We denote by $\pi:C(\overline{G},S)  \rightarrow T$ the map of Lemma \ref{2.5}.\\ 
We recall that we denote by $l(g)$ the length of the normal form of $g$.\\


We will use the Partition Theorem (Thm \ref{2.4}). Let $R,r \in \mathbb{N}$ be such that $R>1$ and $r > 4R$.

We set,   
$$U_{r}= E[(\pi^{-1}(B^{T}_{r-1}) \cap E(\lbrace g \in \overline{G} \mid d(g,N \cup \phi(N)) \geq R   \rbrace))\cup(\bigcup_{u \in \partial B^{T}_{r} } E_{R}^{u})],$$
 where $E_{R}^{u}= g_{u}E(N_{R}(N \cup \phi(N))) \cap \pi^{-1}(T^{u})$.\\
We recall that $M_{R}=\lbrace g \in \overline{G} \mid d(g,N \cup \phi(N)) = R   \rbrace$.\\
Let $A_{R}$ be the collection of the edges between the elements of $M_{R} \subseteq U_{r}$.
We have that $A_{R} \subseteq U_{r}$. We define $V_r$ to be the set obtained by removing the interior of the edges of $A_{R}$ from $U_{r}$.\\ 
Formally we have that  
\begin{center}
$V_{r} =  U_{r} \setminus \lbrace interior(e) \mid e \in A_{R} \rbrace$.
\end{center}

\begin{figure}
\begin{center} 
\includegraphics[scale=.6]{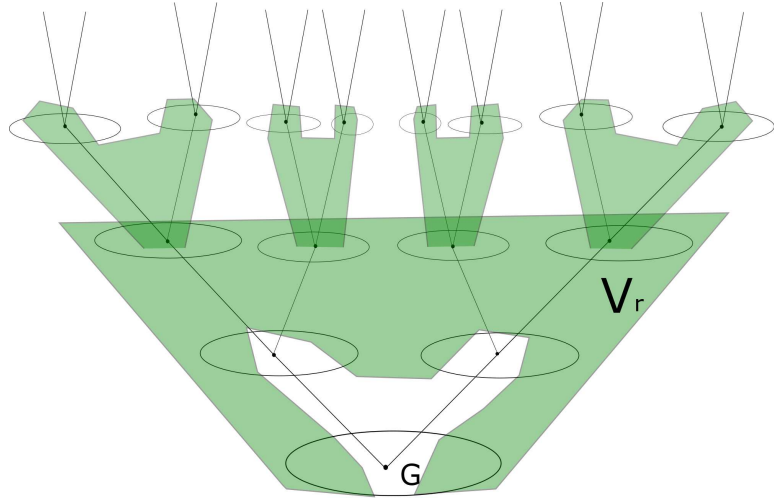}
\caption{An illustration of $V_r$.}
\end{center}
\end{figure}

See figure 7 for an illustration of the set $V_r$. 
We observe that the sets $U_{r}$ and $V_{r}$ are subgraphs of $C(\overline{G})$, $\partial U_{r}= \partial V_{r}$ and $ V_{r} \cap \overline{G} = U_{r} \cap \overline{G}$. Obviously, $\bigcup_{u \in \partial B^{T}_{r} } E_{R}^{u} \subseteq V_{r}.$
We also have
$$V_{r} \cap \overline{G} = (\overline{G} \cap \pi^{-1}(B^{T}_{r-1}) \cap E(\lbrace g \in \overline{G} \mid d(g,N \cup \phi(N)) \geq R   \rbrace))\cup( \overline{G} \cap \bigcup_{u \in \partial B^{T}_{r} }  E_{R}^{u}).$$ 
To be more precise,

  
\begin{center}
$V_{r} \cap \overline{G} = \lbrace  wx \in \overline{G}  \mid d(w,N \cup \phi(N)) \geq R $ and  if $ w=g_{0}t^{\epsilon_{1}}g_{1}...t^{\epsilon_{k}}g_{k} $ is the normal form of $w$ then  $ k \leq r -1$ or $g_{k}=1$ and $k=r$ , if $x \neq 1$ then $ k = r $, $g_{k}=1$  , $d(x, N \cup \phi(N)) \leq R         \rbrace $.
\end{center} 

For every vertex $u \in T^0$ satisfying $\mid\!u\!\mid  \in \lbrace nr \mid n \in \mathbb{N}\rbrace$, we define
\begin{center}
 $V^{u}_{r} = g_{u}V_{r} \cap \pi^{-1}(T^{u})$.
\end{center}

Obviously, the sets $V^{u}_{r}$ are subgraphs of $C(\overline{G})$ and $V^{u}_{r} \nsubseteq \overline{G}$. We observe that $V_{r}  \subseteq \pi^{-1}(B^{T}_{r+R})$, so $V_{r}^{u}  \subseteq \pi^{-1}(B^{u}_{r+R})$. Obviously, for every $h$ such that $h=g_{1}t^{\epsilon_{1}}g_{2}...t^{\epsilon_{r}}$ is the \emph{left normal form} of $h$ we have that:

 \begin{center}
  $(g_{u} M_{R} \cap \pi^{-1}(T^{g_{u}G}))\cup (g_{u} h M_{R} \cap \pi^{-1}(T^{g_{u}hG}))\subseteq \partial V_{r}^{u}$, where $l(h)=r$. ($\star$)
  \end{center} 



This can also be written as:
\begin{center}
  $M_{R}^{g_{u}G} \cup M_{R}^{g_{u}hG}\subseteq \partial V_{r}^{u}$. 
    \end{center}
We set $V_{r}^{G}=V_{r}$.

We consider the partition
\begin{equation}
C(\overline{G}) =\pi^{-1}(T) = (\bigcup_{\mid u \mid \in \lbrace nr \mid n \in \mathbb{N}_{+} \cup \lbrace 0 \rbrace \rbrace} V^{u}_{r}) \cup E(N_{R}(N \cup \phi(N)))
\end{equation}
We set,
\begin{center}
$Z = (\bigcup_{\mid u \mid \in \lbrace nr \mid n \in \mathbb{N}_{+} \rbrace \cup \lbrace  0  \rbrace} \partial V^{u}_{r}) \cup \partial E(N_{R}(N \cup \phi(N)))$.
\end{center}

We observe that if $V^{u}_{r} \cap V^{v}_{r} \neq \varnothing $, then either $u \leq v$ and $\mid\!u\!\mid +r = \mid\!v\!\mid$ or $u \geq v$
and $\mid\!v\!\mid +r = \mid\!u\!\mid$. 
If $V^{u}_{r} \cap V^{v}_{r} \neq \varnothing $ such that $u\leq v$ and $\mid\!u\!\mid +r = \mid\!v\!\mid$, then

\begin{center}
$V_{r}^{u} \cap V_{r}^{v} = M^{v}_{R} = \partial V_{r}^{u} \cap \partial V_{r}^{v}.$
\end{center}






We deduce that
\begin{center}
$Z = (\bigcup_{\mid u \mid \in \lbrace nr \mid n \in \mathbb{N}_{+} \rbrace  } M^{u}_{R})) \cup M_{R}$.
\end{center}


We will show that there exists $d>0$ such that $(R,d)-dimZ \leq n-1 $.\\ Since $M_{R}$ is quasi isometric to $N_{R}(N \cup \phi(N))$, which is quasi isometric to $N \cup \phi(N)$, we have that $asdim M_{R} \leq n-1$. Then for $R>0$ there exists a $(R,d)$-covering  $\mathcal{U}$ of $M_{R}$ with $ord(\mathcal{U})\leq n$.\\
In view of the proposition \ref{2.6} we have that the following covering

$$\mathcal{V}=\mathcal{U} \cup \bigcup_{\mid u \mid \in \lbrace nr \mid n \in \mathbb{N}_{+} \rbrace} (g_{u}\mathcal{U} \cap M_{R}^{u})$$

 is a $(R,d)$-covering of $Z$ with $ord(\mathcal{V})\leq n$. We conclude that $(R,d)-dimZ \leq n-1 $.\\

 
Next, we will show that $asdimV^{u}_{r} \leq n$ and $asdimN_{R}(N \cup \phi(N)) \leq n$ uniformly. This will complete our proof that all the conditions of the Partition Theorem are satisfied.\\
It suffices to show that $asdimV^{u}_{r} \leq n$ uniformly and $$asdimN_{R}(N \cup \phi(N))\leq n.$$
We observe that $V_{r} \subseteq \pi^{-1}(B^{T}_{r+R}) \subseteq N_{1}(Q_{r+R})$, so by the proposition \ref{2.8} we have that $asdimV^{u}_{r} \leq n$. Since the sets $V^{u}_{r}$ of our partition are isometric to each other we conclude that $asdimV^{u}_{r} \leq n$ uniformly.\\
Finally, $asdimN_{R}(N \cup \phi(N)) \leq n-1$ since  $N_{R}(N \cup \phi(N)))$  is quasi isometric to $N \cup \phi(N)$.\\

By the Partition Theorem (Thm \ref{2.4}), $asdimC(\overline{G}) = asdim \pi^{-1}(T) \leq n$.

\end{proof}
 

\subsection{Right-angled Artin groups.}
We use the following theorem of G.Bell, A.Dranishnikov, and J.Keesling (see \cite{BDK}).

\begin{thm}\label{2.10}
If $A$ and $B$ are finitely generated groups then $$asdim\, A \ast B = max \lbrace  asdimA , asdimB  \rbrace.$$
\end{thm}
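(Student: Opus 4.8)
The plan is to prove the two inequalities separately, the upper bound being the real content. For the lower bound, note that with respect to the generating set $S_A \cup S_B$ each factor embeds undistortedly: the $A\ast B$-word length of an element of $A$ equals its $A$-word length, since such an element is already its own normal form. Thus $A$ and $B$ sit in $A \ast B$ as isometrically embedded subspaces, and since asymptotic dimension is monotone under passage to subspaces we obtain $asdim(A\ast B)\ge \max\{asdim A, asdim B\}$. When both factors are nontrivial the product is an infinite finitely generated group, whence $asdim(A\ast B)\ge 1$ as well; this is why the sharp value is $\max\{asdim A, asdim B, 1\}$, which coincides with the stated bound precisely when one factor is already infinite.

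For the upper bound, set $n=\max\{asdim A, asdim B, 1\}$. The quickest route is to view $A\ast B$ as the amalgam $A\ast_{\{1\}}B$ and invoke the amalgamated-product inequality noted just before Proposition \ref{2.8}, which with trivial edge group gives $asdim(A\ast B)\le \max\{asdim A, asdim B, 1\}$. To stay self-contained, however, I would instead reprove this by transporting the argument of Theorem \ref{2.9} to this amalgam. The group acts on its (now bipartite) Bass--Serre tree $T$, with vertex stabilizers conjugate to $A$ and $B$ and with \emph{trivial} edge stabilizers, and exactly as in Lemma \ref{2.5} the projection $\pi:Cay(A\ast B, S_A\cup S_B)\to T$ is $1$-Lipschitz. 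I would then run the same level partition: for tree vertices $u$ with $|u|$ a multiple of $r$, cut the Cayley graph into mutually isometric pieces $V_r^u$ projecting into balls $B^u_{r+R}$, together with a bounded neighborhood of the edge cosets.

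The decisive step, and the one I expect to be the main obstacle, is the free-product analogue of Proposition \ref{2.8}: that the set $Q_m$ of words of bounded normal-form length satisfies $asdim Q_m\le\max\{asdim A, asdim B\}$. I would prove this by the same induction on $m$ via the Infinite Union Theorem (Thm \ref{2.2}), with the cosets $gA$ and $gB$ now playing the role of the cosets $xtG$ there; the induction is in fact cleaner here because the edge group is trivial, so distinct vertex cosets separate as soon as one leaves a bounded neighborhood of the point-like edge cosets. Granting this, each $V_r^u$ lies in a bounded neighborhood of some $Q_{r+R}$ and hence has $asdim\le n$ uniformly, as does the neighborhood of the edge cosets.

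It remains to check the boundary hypothesis of the Partition Theorem (Thm \ref{2.4}). The boundary set $Z$ between adjacent pieces is a union of translates of the edge-group cosets; since the edge group is trivial these are single points, and a coarse analogue of Proposition \ref{2.6} shows they are uniformly separated. Hence $Z$ is coarsely a disjoint union of points, so $(R,d)-dim\,Z\le 0$; the Partition Theorem then requires only $0\le n-1$, i.e.\ $n\ge 1$, which is exactly where the value $1$ in $n$ is consumed. Applying Theorem \ref{2.4} gives $asdim(A\ast B)\le n$, and together with the lower bound this yields the claimed equality.
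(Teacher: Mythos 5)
Your proposal addresses a statement for which the paper supplies no argument at all: Theorem \ref{2.10} is simply quoted from Bell--Dranishnikov--Keesling \cite{BDK}, so there is no internal proof to compare against, and what you have written is a genuine (and essentially correct) self-contained derivation from the paper's own machinery. Your lower bound is fine, though the cleanest justification that $A$ embeds isometrically is the retraction $A \ast B \rightarrow A$ killing $B$, which is $1$-Lipschitz for the generating set $S_{A} \cup S_{B}$; monotonicity of $asdim$ under subspaces then gives $asdim\, A \ast B \geq max \lbrace asdim A, asdim B \rbrace$. For the upper bound, both of your routes work and are logically independent of Theorem \ref{2.10} itself: either quote Dranishnikov's inequality $asdim\, A \ast_{C} B \leq max \lbrace asdim A, asdim B, asdim C + 1 \rbrace$ from \cite{Dra08} with $C$ trivial (note this inequality is not ``noted just before Proposition \ref{2.8}''---what is noted there is the amalgam analogue of Proposition \ref{2.8}; the inequality itself is only invoked in Section 4), or rerun the proof of Theorem \ref{2.9} for the amalgam $A \ast_{\lbrace 1 \rbrace} B$ exactly as you sketch: the $1$-Lipschitz projection to the Bass--Serre tree, the induction on normal-form length via the Infinite Union Theorem \ref{2.2} (which, as you say, is easier here because distinct cosets $xA$, $yA$ separate by the syllable-length estimate replacing Lemma \ref{2.7}), uniform separation of the boundary pieces as in Proposition \ref{2.6}, and then the Partition Theorem \ref{2.4}. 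One small correction to your boundary analysis: with trivial edge group the sets playing the role of $M_{R}^{u}$ are translates of the sphere $M_{R}$ of radius $R$ about the identity, not literal points; but they are uniformly bounded and uniformly separated, so $Z$ is still coarsely $0$-dimensional and $(R,d)-dim\, Z \leq 0 \leq n-1$ holds whenever $n \geq 1$, exactly as you use it. Finally, you are right to flag the constant $1$: as printed, the theorem is false for $A = B = \mathbb{Z}/2\mathbb{Z}$, whose free product is infinite dihedral with asymptotic dimension $1 > 0$; the sharp statement of \cite{BDK} is $asdim\, A \ast B = max \lbrace asdim A, asdim B, 1 \rbrace$ for nontrivial factors, and the version stated here is correct only when some factor is infinite---which happens to be the case in every application in the paper (the factors there are $\mathbb{Z}$, RAAGs, or free products involving a nontrivial free group).
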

 
Let $\Gamma$ be a finite simplicial graph with $n$ vertices, the \textit{Right-angled Artin group} (RAAG) $A(\Gamma)$ associated to the graph $\Gamma$ has the following presentation: 
\begin{center}
$ A(\Gamma)= \langle s_{u} $ , $ (u \in V(\Gamma))  \mid [s_{u},s_{v}] $ , $ ([u,v] \in E(\Gamma))    \rangle .$
\end{center}

By $[s_{u},s_{v}]=s_{u}s_{v}s_{u}^{-1}s_{v}^{-1}$ we mean the commutator.\\
We set $Val(\Gamma)= max \lbrace  valency(u) \mid u \in V(\Gamma) \rbrace$. By $valency(u)$ of a vertex $u$ we denote the number of edges incident to the vertex $u$.\\
Clearly, $Val(\Gamma) \leq rank(A(\Gamma))-1.$\\

If $\Gamma$ is a simplicial graph, we denote by $1-skel(\Gamma)$ the 1-skeleton of $\Gamma$. Recall that a \emph{full subgraph} of a graph $\Gamma$ is a subgraph formed from a subset of vertices $V$ and from all of the edges that have both endpoints in the subset $V$.

\textbf{Conventions:} Let $\Gamma$ be simplicial graph, $u \in V(\Gamma)$ and $e \in E(\Gamma)$. We denote by:\\
(i) $\Gamma \setminus \lbrace u \rbrace$ the full subgraph of $\Gamma$ formed from $V(\Gamma)\setminus \lbrace u \rbrace$.\\
(ii) $\Gamma \setminus e$ the subgraph of $\Gamma$ such that $V(\Gamma \setminus e)=V(\Gamma)$ and $E(\Gamma \setminus e)=E(\Gamma) \setminus \lbrace e \rbrace$.

\begin{lem}\label{2.11}
Let $\Gamma$ be a finite simplicial graph. Then 

\begin{center}
$asdimA(\Gamma)  \leq Val(\Gamma)+1.$
\end{center}
\end{lem}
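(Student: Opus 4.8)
The plan is to induct on the number of vertices $n=|V(\Gamma)|$, using a standard HNN-decomposition of a RAAG obtained by stripping off a single generator and then feeding it into Theorem \ref{2.9}.

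First I would set up the decomposition. Fix any vertex $u \in V(\Gamma)$, write $lk(u)$ for the full subgraph of $\Gamma$ spanned by the neighbours of $u$ (so $lk(u)$ has exactly $valency(u)$ vertices), and let $\Gamma \setminus \{u\}$ be the full subgraph on the remaining vertices. Since the only relations involving $s_u$ are the commutators $[s_u,s_v]$ with $v$ a neighbour of $u$, the generator $s_u$ commutes with the special subgroup $A(lk(u)) \leq A(\Gamma\setminus\{u\})$ and with nothing else. Consequently
$$A(\Gamma) = A(\Gamma\setminus\{u\}) \ast_{A(lk(u))},$$
the HNN-extension with base $G=A(\Gamma\setminus\{u\})$, edge group $N=A(lk(u))$, stable letter $t=s_u$, and identity monomorphism $\phi=\mathrm{id}_N$: indeed the defining relation $tnt^{-1}=\phi(n)=n$ of this HNN-extension is precisely the statement that $s_u$ commutes with $A(lk(u))$, and comparing presentations recovers $A(\Gamma)$. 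Here I would invoke the standard fact that special subgroups of RAAGs are again RAAGs on full subgraphs, so that $A(lk(u))$ really is the RAAG associated to $lk(u)$, embedded in $A(\Gamma\setminus\{u\})$.

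Next I would apply Theorem \ref{2.9} to get
$$\asdim A(\Gamma) \leq \max\{\asdim A(\Gamma\setminus\{u\}),\ \asdim A(lk(u))+1\},$$
and bound each term by the inductive hypothesis. For the base term, $\Gamma\setminus\{u\}$ has $n-1$ vertices and $Val(\Gamma\setminus\{u\})\leq Val(\Gamma)$ (deleting a vertex only deletes edges, so no remaining valency increases), whence $\asdim A(\Gamma\setminus\{u\}) \leq Val(\Gamma\setminus\{u\})+1 \leq Val(\Gamma)+1$. For the edge term, $lk(u)$ has $valency(u)\leq n-1$ vertices, so induction applies and gives $\asdim A(lk(u)) \leq Val(lk(u))+1$. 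The base case is $n\leq 1$: the trivial group has asymptotic dimension $0$, and a single vertex gives $A(\Gamma)\cong\mathbb{Z}$ with $\asdim=1=Val(\Gamma)+1$.

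The crux — the step I expect to carry the whole argument — is the elementary inequality $Val(lk(u))+1\leq Val(\Gamma)$, which is exactly what prevents the ``$+1$'' in Theorem \ref{2.9} from accumulating along the induction. It holds because any graph on $m$ vertices has maximal valency at most $m-1$; applied to $lk(u)$, which has $valency(u)$ vertices, this yields $Val(lk(u))\leq valency(u)-1\leq Val(\Gamma)-1$. Hence $\asdim A(lk(u))+1 \leq Val(lk(u))+2 \leq Val(\Gamma)+1$, and combining the two bounds gives $\asdim A(\Gamma)\leq Val(\Gamma)+1$, closing the induction. In the degenerate case $valency(u)=0$ the edge group is trivial and the decomposition is the free product $A(\Gamma\setminus\{u\})\ast\mathbb{Z}$, which is still covered by Theorem \ref{2.9} (with trivial $N$), or alternatively handled directly by Theorem \ref{2.10}.
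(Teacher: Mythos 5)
Your proposal is correct and follows essentially the same route as the paper: induct on the number of vertices, exhibit $A(\Gamma)$ as the HNN-extension $A(\Gamma\setminus\{u\})\ast_{A(lk(u))}$ with identity monomorphism, apply Theorem \ref{2.9}, and close the induction via the counting bound $Val(lk(u))\leq valency(u)-1\leq Val(\Gamma)-1$. The only (harmless) differences are that the paper first reduces to connected graphs via Theorem \ref{2.10} and removes a vertex of \emph{maximal} valency, whereas you observe that any vertex works and handle isolated vertices directly.
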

\begin{proof}
Since theorem \ref{2.10} holds we observe that it suffices to show the statement of the lemma \ref{2.11} for connected simplicial graphs. We assume that $\Gamma$ is a connected simplicial graph.\\
We use induction on the $rank(A(\Gamma))$. For $rank(A(\Gamma))=1$ we have that $A(\Gamma)$ is 
the integers $\mathbb{Z}$, so the statement holds. We assume that the statement holds for every $k \leq n$ and we show that it holds for $n+1$ ($n+1 \geq 2$).\\

Let $\Gamma$ be a simplicial graph with $n+1$ vertices. We remove a vertex $u$ from the graph $\Gamma$ such that $valency(u)= Val(\Gamma) = m \geq 1$. Let's denote by $v_{i}$ ($i \in \lbrace 1,..., m \rbrace$) the vertices of $\Gamma$ which are adjacent to $u$.\\ 
We set $\Gamma^{\prime}=\Gamma \setminus \lbrace u \rbrace $. Obviously, $Val(\Gamma^{\prime}) \leq Val(\Gamma)$.\\
We denote by $Y$ the full subgraph of $\Gamma$ formed from $\lbrace  v_{1}, \ldots ,v_{m} \rbrace$.\\

We observe that the RAAG $A(\Gamma)$ is an HNN-extension of the RAAG $A(\Gamma^{\prime})$. To be more precise, we have that
\begin{center}
$A(\Gamma)=  A(\Gamma^{\prime})\ast_{A(Y)}.$ 
\end{center}


By Theorem \ref{2.9} we obtain that $$asdimA(\Gamma) \leq max\lbrace asdimA(\Gamma^{\prime})  , asdimA(Y)+1  \rbrace .$$ 

We observe that $  Val(Y)  \leq Val(\Gamma) -1$, so by the induction ($ rank(A(Y)) \leq n$) we obtain  $$asdimA(Y) \leq Val(Y) +1  \leq Val(\Gamma).$$

Since $rankA(\Gamma^{\prime}) = n$, by the induction we deduce that  $$asdimA(\Gamma^{\prime})  \leq Val(\Gamma^{\prime}) + 1 \leq Val(\Gamma) + 1.$$ Combining the three previous inequalities we obtain:

\begin{center}
$   asdimA(\Gamma) \leq max\lbrace  Val(\Gamma) + 1  , Val(\Gamma) + 1 \rbrace =   Val(\Gamma) + 1  . $
\end{center}




\end{proof}

Using the previous lemma we can compute the exact asymptotic dimension of $A(\Gamma)$. We note that this has already been computed by N.Wright \cite{Wr} using different methods.\\

We set 
\begin{center}
$Sim(\Gamma)= max \lbrace n \mid $ $\Gamma$ contains the 1-skeleton of the standard $(n-1)$-simplex $\Delta^{n-1} \rbrace.$
\end{center} 
Obviously if $\Gamma^{\prime} \subseteq \Gamma$, then $Sim(\Gamma^{\prime}) \leq Sim(\Gamma)$.


\begin{thm}\label{2.12}
Let $\Gamma$ be a finite simplicial graph. Then,

$$asdimA(\Gamma)=Sim(\Gamma).$$
\end{thm}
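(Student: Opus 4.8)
The plan is to prove the two inequalities $asdim\,A(\Gamma) \leq Sim(\Gamma)$ and $asdim\,A(\Gamma) \geq Sim(\Gamma)$ separately: the first by an induction that refines Lemma \ref{2.11}, tracking $Sim$ in place of $Val$, and the second by exhibiting a large undistorted free abelian subgroup.

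For the upper bound I would induct on the number of vertices of $\Gamma$, the induction hypothesis ranging over \emph{all} finite simplicial graphs. By Theorem \ref{2.10}, together with the fact that $Sim(\Gamma_{1} \sqcup \Gamma_{2}) = \max\{Sim(\Gamma_{1}), Sim(\Gamma_{2})\}$ (a clique is connected, so it lies in a single component), I may assume $\Gamma$ is connected; the single-vertex case gives $A(\Gamma)=\mathbb{Z}$ with $asdim = 1 = Sim(\Gamma)$. For connected $\Gamma$ with at least two vertices, pick a vertex $u$, let $Y$ be its link (the full subgraph on the neighbours of $u$), and use the decomposition $A(\Gamma) = A(\Gamma \setminus \{u\}) \ast_{A(Y)}$ exactly as in Lemma \ref{2.11}. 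Theorem \ref{2.9} then yields
$$asdim\,A(\Gamma) \leq \max\{asdim\,A(\Gamma \setminus \{u\}),\; asdim\,A(Y) + 1\}.$$
Both $\Gamma \setminus \{u\}$ and $Y$ have strictly fewer vertices, so the inductive hypothesis bounds their asymptotic dimensions by $Sim(\Gamma \setminus \{u\})$ and $Sim(Y)$. The two combinatorial facts I need are $Sim(\Gamma \setminus \{u\}) \leq Sim(\Gamma)$ (monotonicity of $Sim$ under full subgraphs) and, crucially, $Sim(Y) + 1 \leq Sim(\Gamma)$: any clique of $Y$, adjoined with $u$ which is adjacent to every vertex of $Y$, forms a clique of $\Gamma$ with one more vertex. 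Substituting gives $asdim\,A(\Gamma) \leq Sim(\Gamma)$.

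For the lower bound, set $k = Sim(\Gamma)$ and let $K$ be the full subgraph of $\Gamma$ on a clique with $k$ vertices; since $K$ is complete, $A(K) = \mathbb{Z}^{k}$. The standard retraction $A(\Gamma) \to A(K)$ sending every generator outside $K$ to the identity exhibits $A(K)$ as a retract, hence as an undistorted subgroup, so the inclusion $\mathbb{Z}^{k} \hookrightarrow A(\Gamma)$ is a quasi-isometric embedding. As asymptotic dimension is monotone under quasi-isometric embeddings and $asdim\,\mathbb{Z}^{k} = k$, I obtain $asdim\,A(\Gamma) \geq k = Sim(\Gamma)$, and combining the two inequalities proves the theorem.

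I expect the upper-bound induction to be the routine part once Theorem \ref{2.9} is in hand; the genuinely delicate points are the two places where $Sim$ must be controlled when passing to $\Gamma \setminus \{u\}$ and to the link $Y$, and, on the lower-bound side, the need to know that $\mathbb{Z}^{k}$ is \emph{undistorted} so that monotonicity of asymptotic dimension applies to the abstract subgroup rather than merely to the subspace carrying the restricted metric. The retraction argument is precisely what removes this last obstacle.
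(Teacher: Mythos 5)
Your proof is correct and follows essentially the same route as the paper: the lower bound comes from the $\mathbb{Z}^{k}$ subgroup generated by a largest clique, and the upper bound from induction on the number of vertices using the HNN decomposition $A(\Gamma)=A(\Gamma\setminus\{u\})\ast_{A(Y)}$ over the link $Y$ together with Theorem \ref{2.9}. The only differences are cosmetic: the paper splits off the case $\Gamma=S_{n-1}$ and handles it via Lemma \ref{2.11} while your induction treats all connected graphs uniformly, and for the lower bound the paper simply invokes monotonicity of asymptotic dimension for finitely generated subgroups where you instead re-derive the needed undistortedness from the retraction $A(\Gamma)\to A(K)$.
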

\begin{proof}
Since theorem \ref{2.10} holds we observe that it suffices to show the statement of Theorem \ref{2.12} for connected simplicial graphs. We assume that $\Gamma$ is a connected simplicial graph.\\
\textbf{Claim 1.} $Sim(\Gamma) \leq asdimA(\Gamma)$.\\
\textit{Proof of claim 1:} Let $Sim(\Gamma)=n$. We observe that $\mathbb{Z}^{n} = A(S_{n-1}) \leq A(\Gamma)$.
It follows that
\begin{center}
$n= asdim\mathbb{Z}^{n} \leq asdimA(\Gamma)$.  
\end{center} 
\textbf{Claim 2.} $asdimA(\Gamma ) \leq Sim(\Gamma)$.\\
\textit{Proof of claim 2:} 
We use induction on the $rank(A(\Gamma))$, for $rank(A(\Gamma))=1$ we have that $A(\Gamma)$ is 
the integers $\mathbb{Z}$, so the statement holds. We assume that the statement holds for every $r \leq m$, we will show that holds for $m+1$ as well. Let $\Gamma$ be a connected simplicial graph with $m+1$ vertices.\\

Let $Sim(\Gamma)=n$, then $\Gamma$ contains the 1-skeleton of the standard $(n-1)$-simplex $S_{n-1}$ ($S_{n-1}  =  1-skel(\Delta^{n-1})).$\\
\textbf{Case 1.} $\Gamma = S_{n-1}.$\\
Then $m+1=n$, so by lemma \ref{2.11} we have 

$asdimA(S_{n-1}) \leq Val(S_{n-1}) +1$. By observing that $Val(S_{n-1}) = n-1$ we obtain that

\begin{center}
$asdimA(S_{n-1}) \leq n = Sim(\Gamma)$.
\end{center}
\textbf{Case 2.} $   S_{n-1} \subsetneqq  \Gamma.$\\
We will remove a vertex $u \in V(S_{n-1})$. Let's denote by $v_{i}$ ($i \in \lbrace 1,..., k \rbrace$) the vertices of $\Gamma$ which are adjacent to $u$ . We set $\Gamma^{\prime}=\Gamma \setminus \lbrace u \rbrace $. Obviously  $Sim(\Gamma^{\prime}) \leq n$. \\
We denote 
by $Y$ the full subgraph of $\Gamma$ formed from $\lbrace  v_{1}, \ldots ,v_{k} \rbrace$.\\

We observe that the RAAG $A(\Gamma)$ is an HNN-extension of the RAAG $A(\Gamma^{\prime})$. To be more precise, we have that
\begin{center}
$A(\Gamma)=  A(\Gamma^{\prime})\ast_{A(Y)}.$ 
\end{center}

By Theorem \ref{2.9} we obtain that 

\begin{equation}\label{eq2}
asdimA(\Gamma) \leq max\lbrace asdimA(\Gamma^{\prime})  , asdimA(Y)+1  \rbrace .
\end{equation}

Since $Sim(\Gamma^{\prime}) \leq n$ and $rank(\Gamma^{\prime}) \leq m$, by the inductive assumption we have that: 
\begin{equation}\label{eq3}
asdimA(\Gamma^{\prime}) \leq Sim(\Gamma^{\prime}) \leq n.
\end{equation}

We observe that $Sim(Y) \leq n-1$ and $rank(Y) \leq m$, then by the induction we obtain
\begin{equation}\label{eq4}
asdim A(Y) + 1 \leq Sim(Y) +1 \leq n.
\end{equation}  

by (\ref{eq2}), (\ref{eq3}) and (\ref{eq4}) we conclude that

\begin{center}
$asdimA(\Gamma) \leq n = Sim(\Gamma) .$  
\end{center}

\end{proof}

\section{Asymptotic dimension of one-relator groups.}

\begin{thm}\label{3.1}
Let G be a finitely generated one relator group. Then 
\begin{center}
$asdimG \leq 2$.
\end{center}
\end{thm}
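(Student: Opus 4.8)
The plan is to run the Magnus--Moldavanskii hierarchy for one relator groups and feed each HNN step into Theorem \ref{2.9}. The decisive observation is that in this hierarchy every associated (edge) subgroup is \emph{free}, so its asymptotic dimension is at most $1$; hence the term $asdim\,N+1$ appearing in Theorem \ref{2.9} never exceeds $2$. Consequently, if at each stage the ``inner'' one relator group already has asymptotic dimension at most $2$, then so does the HNN-extension, and the bound $2$ propagates up the whole hierarchy. I would organize this as an induction on the length $|r|$ of a cyclically reduced relator $r$, the free products being absorbed by Theorem \ref{2.10}.

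First I would set up the induction and dispose of the base cases. After cyclically reducing $r$, I would use Theorem \ref{2.10} to peel off, as free $\mathbb{Z}$-factors, every generator that does not occur in $r$; this changes neither $|r|$ nor the asymptotic dimension beyond a harmless $\max$ with $1$. If only one generator occurs, then $r$ is a power of it and $G$ is finite cyclic, or a free product of such with a free group, so $asdim\,G \le 1$ by Theorem \ref{2.10}; free groups, whose Cayley graph is a tree, likewise have $asdim \le 1$. These are the base cases.

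The main case is when some generator $t$ occurring in $r$ has exponent sum $0$. Here the Magnus rewriting process expresses $G$ as an HNN-extension $G = G' \ast_{A}$ with stable letter $t$, where $G' = \langle\, t^{-i} x_j t^{i} \mid r' \,\rangle$ is a finitely generated one relator group whose relator $r'$, obtained by recording the $t$-height of each non-$t$ letter of $r$, has length strictly smaller than $|r|$ since all the $t^{\pm 1}$ letters are removed. The associated subgroups $A$ and $\phi(A)$ are \emph{Magnus subgroups} of $G'$, i.e. generated by all but one of the $t^{-i}x_j t^i$ occurring in $r'$; by the Freiheitssatz each is free of finite rank, so $asdim\,A \le 1$. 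Applying Theorem \ref{2.9} and the inductive hypothesis $asdim\,G' \le 2$ gives
\[
asdim\,G = asdim(G' \ast_A) \le \max\{\, asdim\,G',\ asdim\,A + 1 \,\} \le \max\{2,2\} = 2 .
\]

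The remaining case is when no generator occurring in $r$ has exponent sum $0$; following Moldavanskii, I would embed $G$ into a one relator group $G^{*}$ on one additional generator in which some generator does have exponent sum $0$, so that $G^{*}$ falls under the main case and $asdim\,G^{*} \le 2$, whence $asdim\,G \le asdim\,G^{*} \le 2$ because asymptotic dimension is monotone under coarse embeddings and the inclusion of a finitely generated subgroup is a coarse embedding even when distorted. I expect the main obstacle to be the group-theoretic scaffolding rather than the dimension estimate: one must produce the HNN decomposition with free associated subgroups (the Freiheitssatz being essential), verify in the zero-exponent-sum case that the rewritten relator is genuinely shorter so that the induction on $|r|$ is well founded, and arrange the exponent-sum dichotomy and the embedding so that the whole procedure terminates. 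Once this structure is in place, Theorem \ref{2.9} together with $asdim(\text{free}) \le 1$ delivers the bound $2$ essentially for free.
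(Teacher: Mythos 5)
Your proposal is correct and follows essentially the same route as the paper: induction on the relator length, the Magnus rewriting exhibiting $G$ as an HNN-extension of a shorter one relator group over \emph{finitely generated} free Magnus subgroups (via the Freiheitssatz), Theorem \ref{2.9} to bound the dimension, and the Magnus embedding into a group with a zero-exponent-sum generator in the remaining case. The well-foundedness issue you flag (the embedded group's relator is longer, but the relator obtained after rewriting it has length at most $|r|-1$) is exactly the point the paper verifies, so your outline matches the published argument in all essentials.
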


\begin{proof}

Let $G= \langle S \mid r \rangle$ be a presentation of $G$ where $S$ is finite and $r$ is a cyclically reduced word in $S \bigcup S^{-1}$. To omit trivial cases, we assume that $S$
contains at least two elements and $\mid r \mid > 0$, (we denote by $\mid r \mid$ the 
length of the relator $r$ in the free group $F(S)$).\\

We may assume that every letter of $S$ appears in $r$. Otherwise our group $G$ is isomorphic to a free product $H \ast F$ of a finitely generated one relator group $H$ with relator $r$ and generating set $S_{H} \subseteq S$ consisting of all letters which appear in $r$ and a free group $F$ with generating set the remaining letters of $S$. We recall that the asymptotic dmension of any finitely generated non-abelian free group is equal to one. Then $asdim G = max \lbrace asdim H, asdim F \rbrace = max \lbrace asdim H, 1 \rbrace$ (see \cite{BD04}).\\

We denote by $\epsilon_{r}(s)$ the exponent sum of a letter $s \in S$ in a word $r$ and by $oc_{r}(s)$ the minimum number of the positions of appearance of the elements of the set $ \lbrace s^{k}$ for some $0 \neq k \in \mathbb{Z} \rbrace $ in a cyclically reduced word $r$. For example, if $r=abcab^{10}a^{-2}c^{-1}$, then $oc_{r}(a)=3$, $oc_{r}(b)=2$, $oc_{r}(c)=2$ and $\epsilon_r (c)=0$.

 We observe that if there exists $b \in S$ such that $oc_{r}(b)=1$, then the group $G$ is free (see \cite{LynSch}, thm 5.1, page 198 ), so $asdim G =1$. From now on we assume that for every $s \in S$ we have that $oc_{r}(s) \geq 2$ (so $\mid r \mid\geq 4$).\\

The proof is by induction on the length of $r$. We observe that if $\mid\!r\!\mid = 4$ then the statement of the theorem holds since by the result of D.Matsnev \cite{Mats} we have that $asdim G \leq  \frac{\lfloor \mid\!r\!\mid \rfloor}{2} = \frac{4}{2} =2$ (where $\lfloor \ast \rfloor$ is the floor function).\\
We assume that the statement of the theorem holds for all one relator groups with relator length smaller than or equal to $\mid\!r\!\mid - 1$.

We follow the arguments of McCool, Schupp and Magnus (see \cite{LynSch}, thm 5.1, page 198), which we shall describe in what follows. We distinguish two cases.\\
We note that the argument we use in case 1 is slightly different from the argument of McCool and Schupp as we will consider HNN-extensions over finitely generated groups. To be more precise, the classical arguments prove that if the relation of $G$ has exponent sum zero, then $G$ is an HNN-extension of another one relator group over an infinitely generated non-abelian free subgroup. Our contribution here is that we showed that $G$ is an HNN-extension of another one relator group over a \emph{finitely} generated non-abelian free subgroup.\\
In case 2 with non-zero exponent sum we use the original argument of Magnus to show that $G$ can be embedded in an one relator group $\Gamma$ whose defining relator has exponent sum zero.\\

\textbf{Case 1}: There exists a letter $a \in S$ such that $\epsilon_{r}(a)=0$.\\

We shall exhibit $G$ as an HNN-extension of a one relator group $G_{1}$ whose defining relator has shorter length than $r$, over a finitely generated free subgroup $F$.\\ 
Let  $S = \lbrace a=s_{1}, s_{2}, s_{3}, s_{4}, . . . s_{k} \rbrace $. Set $s^{(j)}_{i} = a^{j} s_{i} a^{-j}$ for $j \in \mathbb{Z}$ and  for $k \geq i \geq 2$. Rewrite $r$ scanning it from left to right and changing any occurrence of
$a^{j} s_{i}$ to $s^{(j)}_{i} a^{j}$, collecting the powers of adjacent $a-$letters together and continuing with the leftmost occurrence of $a$ or its inverse in the modified word.\\
We denote by $r^{\prime}$ the modified word in terms of $s^{(j)}_{i}$. 
We note that by doing this we make at least one cancellation of $a$ and its inverse.
The resulting word $r^{\prime}$ which represents $r$ in terms of $s^{(j)}_{i}$ and their inverses has length smaller than or equal to $\mid r \mid - 2$.\\
For example, if $r=as_{2}s_{3}as_{2}^{4}a^{-2}s_{3}$ then $r^{\prime}=s^{(1)}_{2}s^{(1)}_{3}(s^{(2)}_{2})^{4}s^{(0)}_{3}$.\\

Let $m$ and $M$ be the minimal and the maximal superscript of all $s_{i}^{(j)}$ ($i \geq 2$) occurring in  $r^{\prime}$ respectively.
To be more precise,
\begin{center}
$m=min \lbrace j \mid s_{i}^{(j)}$ occurs in $r^{\prime} \rbrace$ and $M=max \lbrace j \mid s_{i}^{(j)}$ occurs in $r^{\prime} \rbrace.$
\end{center}

Continuing our example, we have $m=0$ and $M=2$.\\

\textbf{Claim 1.1:} In case 1 we have $M-m>0$ and $m \leq 0 \leq M$.\\
We may assume, replacing $r$ with a suitable permutation if necessary, that $r$ begins with $a^{k}$ for some $k\neq 0$.
Then 
we can write $r=a^{k}swa^{n}tz$, where $k,n\neq 0$, $a \notin \lbrace s,t \rbrace \subseteq S$ and both $a$ and $a^{-1}$ do not appear in the word $z$ ($oc_z (a)=0$).\\
Then we observe that the letter $s$ has as superscript $k$ in the word $r^{\prime}$ while $t$ has as superscript $0$ in the word $r^{\prime}$. Since $k\neq 0$ we have that $M-m>0$. This completes the proof of the claim 1.1. \\

\textbf{Claim 1.2:} We claim that $G$ has a presentation

\begin{center}
$\langle a, s^{(j)}_{i},(i \in \lbrace 2 , \ldots , k \rbrace ) ,   (j \in \lbrace m,  \ldots , M \rbrace ) \mid r^{\prime} $, $ a s^{(j^{\prime})}_{i} a^{-1}(s^{(j^{\prime}+1)}_{i})^{-1} $ $  (j^{\prime} \in \lbrace m,  \ldots , M-1 \rbrace )   \rangle$.
\end{center}

To verify the claim, let $H$ be the group defined by the presentation given above. The map $\phi :G \longrightarrow H$ defined by 

\begin{center}
$a \longmapsto a$, $s_{i} \longmapsto s_{i}^{(0)}$
\end{center}

is a homomorphism since $\phi(r)=r^{\prime}$. On the other hand, the map $\psi :H \longrightarrow G$
defined by 

\begin{center}
$a \longmapsto a$, $s_{i}^{(j)} \longmapsto a^{j}s_{i}a^{-j}$
\end{center}

is also a homomorphism since all relators of $H$ are sent to $1_{G}$.\\ 

It is easy to verify that $\psi \circ \phi$ is the identity map of $G$.
The  homomorphism $\phi \circ \psi : H \rightarrow H$ maps $a \mapsto a$, $s_{i}^{(0)} \mapsto s_{i} \mapsto s_{i}^{(0)}$ and  $s_{i}^{(j)} \mapsto a^{j}s_{i}a^{-j} \mapsto a^{j}s_{i}^{(0)}a^{-j}$.\\
Now we show that $s_{i}^{(j)} = a^{j}s_{i}^{(0)}a^{-j}$. We have 

\begin{center}
$ a^{1}s_{i}^{(0)}a^{-1}=s_{i}^{(1)}$\\
$ a^{1}s_{i}^{(1)}a^{-1}=s_{i}^{(2)}$\\
$\ldots$\\
$ a^{1}s_{i}^{(j-1)}a^{-1}=s_{i}^{(j)},$
\end{center}

we combine these equations and we get $s_{i}^{(j)} = a^{1}s_{i}^{(j-1)}a^{-1}= a^{2}s_{i}^{(j-2)}a^{-2}= \ldots a^{j}s_{i}^{(0)}a^{-j}$, so $\phi \circ \psi =id_{H}$.\\

Since $\phi \circ \psi$ and $\psi \circ \phi$ are the identity maps on $H$ and $G$ respectively we deduce that $\phi$ is an isomorphism. This completes the proof of claim 1.2.\\



We set
\begin{center}
$G_{1}=\langle s^{(j)}_{i},(i \in \lbrace 2 , \ldots , k \rbrace ) ,   (j \in \lbrace m,  \ldots , M \rbrace )   \mid r^{\prime} \rangle$.
\end{center}

We note that there exists a letter $s_{i_{m}} \in S$ such that $s_{i_{m}}^{(m)}$ appears in $r^{\prime}$ and a letter $s_{i_{M}} \in S$ such that $s_{i_{M}}^{(M)}$ appears in $r^{\prime}$.\\

Now let $F$ and $\Lambda$ be the subgroups of $G_{1}$ generated respectively by the set $X = \lbrace s^{(j)}_{i},(i \in \lbrace 2 , \ldots , k \rbrace ) ,   (j \in \lbrace m,  \ldots , M-1 \rbrace )\rbrace$ and the set $Y = \lbrace s^{(j)}_{i},(i \in \lbrace 2 , \ldots , k \rbrace ) ,   (j \in \lbrace m+1,  \ldots , M \rbrace )\rbrace$.\\
\textbf{Claim 1.3:} The groups $F$ and $\Lambda$ are free subgroups of $G_1$.\\
This claim follows by the Freiheitssatz (see \cite{LynSch}, thm 5.1, page 198), since $X$ omits a generator of $G_{1}$ occurring in $r^{\prime}$ (this is the letter $s_{i_{M}}^{(M)}$) the subgroup $F$ is free.
The same holds for $\Lambda$, since $Y$ omits the letter $s_{i_{m}}^{(m)}$.\\
\textbf{Claim 1.4:} We have that $G \simeq G_{1}\ast_{F}$.\\
In particular, the map $s^{(j)}_{i} \longmapsto s^{(j+1)}_{i}$ from $X$ to $Y$ extends to an isomorphism from $F$ to $\Lambda$.\\
Thus $H$ is exhibited as the HNN extension of $G_{1}$ over the finitely generated free group $F$ using $a$ as a stable letter. Since $G \simeq H $ (claim 1.2) we have that 

\begin{center}
$G \simeq G_{1}\ast_{F}$.
\end{center}

By the fact that $\mid\!r^{\prime}\!\mid < \mid\!r\!\mid$ and the inductive assumption we have that $asdimG_{1} \leq 2$.\\
To conclude we apply the inequality for HNN-extensions (Theorem \ref{2.9}):\\
 $asdim G \leq max \lbrace asdim G_{1} , asdimF +1 \rbrace = max \lbrace asdim G_{1} , 2 \rbrace = 2$.\\

\textbf{Case 2}: For every letter $s \in S$ we have $\mid \epsilon_{r}(s) \mid \geq 1$.\\
Let $S = \lbrace a=s_{1}, b= s_{2}, s_{3}, s_{4}, . . . s_{k} \rbrace $ and $S_1 = \lbrace t,x,s_{i}, (3\leq i \leq k) \rbrace $. We consider the following homomorphism between the free group $F(S)$ and the free group $F(S_1)$
\begin{equation}\label{eq5}
\phi : a \longmapsto t^{-\epsilon_{r}(b)} x , b \longmapsto t^{\epsilon_{r}(a)} ,s_{i} \longmapsto s_{i} (3 \leq i \leq k).
\end{equation}
We set 
\begin{center}
$\Gamma=<S_1 \vert $ $ r(t,x,s_{i}, (3\leq i \leq k))>,$
\end{center}


where we denote by $r(t,x,s_{i},... (i>2))$ the modified word in terms of $t,x,s_{i}, (3\leq i \leq k)$ which is obtained from $r$ when we replace a generator $s$ with $\phi(s)$. Then $\phi$ induces a homomorphism $$\phi : G \rightarrow \Gamma.$$

The following claim shows that the homomorphism $\phi$ is actually a monomorphism into $\Gamma$, so we have an embedding of $G$ into $\Gamma$ via $\phi$:\\
\textbf{Claim 2.1:} The homomorphism $\phi : G \rightarrow \Gamma$ is monomorphism.\\ 
\textit{Proof of the claim:} We set $S_{2} = \lbrace  a,t,s_{i}, (3\leq i \leq k) \rbrace$ and $S_{1} = \lbrace x,t,s_{i}, (3\leq i \leq k)  \rbrace$. We define $g : F(S) \rightarrow F(S_{2})$ and $f : F(S_{2}) \rightarrow F(S_{1})$, by

\begin{center}
$g : a \longmapsto a , b \longmapsto t^{\epsilon_{r}(a)} ,s_{i} \longmapsto s_{i} (3 \leq i \leq k)$,
\end{center}

\begin{center}
$f : a \longmapsto t^{-\epsilon_{r}(b)} x , t \longmapsto t ,s_{i} \longmapsto s_{i} (3 \leq i \leq k)$.
\end{center}

We set: $r_{2}=g(r)$ , $G_{2}= <S_{2} \mid r_{2}>$, $r_{1}=f \circ g (r)=r(t,x,s_{i}, (3\leq i \leq k))$ and we observe that $\Gamma= <S_{1} \mid r_{1}>$. Then $g$ induces a homomorphism $\overline{g}: G \mapsto G_{2}$ and $f$ induces a homomorphism $\overline{f}: G_{2} \mapsto \Gamma$. Obviously, $\phi = \overline{f} \circ \overline{g}$.\\

We can easily see that $\overline{f}$ is an isomorphism. Indeed, the homomorphism $\psi : \Gamma \rightarrow G_2$ given by

\begin{center}
$ x \longmapsto t^{\epsilon_{r}(b)} a , t \longmapsto t ,s_{i} \longmapsto s_{i} (3 \leq i \leq k) $
\end{center}
is the inverse homomorphism of $ \overline{f}$.

It is enough to prove that $\overline{g}$ is monomorphism. This follows by the fact that the group $G_{2}$ is the amalgamated product $G \ast_{\mathbb{Z}} <t>$, where $\mathbb{Z} = < \lambda >$ and $\psi_{1}(\lambda)= b$ , $\psi_{2}(\lambda)= t^{\epsilon_{r}(a)}$ are the corresponding monomorphisms. We can see that the homomorphism $\overline{g}$ is the inclusion of $G$ into the amalgamated product, so $\overline{g}$ is injective. This completes the proof of the claim 2.1.\\


 We denote by $r(t,x,s_{i},... (i \geq 3))$ the modified word in terms of $t,x,s_{i}, (3\leq i \leq k)$ which can be obtained from $r$ when we replace a generator $s$ with $\phi(s)$ and by $p$ the cyclically reduced  $r(t,x,s_{i}, (3\leq i \leq k))$.\\ 
We observe that $\epsilon_{p}(t)=0$ and that $x$ occurs in $p$.\\

 If the letter $t$ occurs in the word $p$, from Case 1 we have that $\Gamma$ is an HNN extension of some group $H$ over a free subgroup $F$, namely, $\Gamma=H\ast_{F}$. \\
As in Case 1 by assuming
that $p$ starts with $t$ or $t^{-1}$ we introduce new variables $s^{(j)}_{i} = t^{j}s_{i}t^{-j}$. Using these variables, we rewrite $p$ as a word $w$, eliminating all occurrences of $t$ and its inverse.
Then we observe that $ \mid\!w\!\mid \leq \mid\!r\!\mid - 1$. By using the inductive assumption for
$w$ we obtain that $$asdimG \leq asdim\Gamma \leq 2.$$

If the letter $t$ does not occur in the word $p$, we observe that $$\mid\!p\!\mid \leq \mid\!r\!\mid - 1.$$ Then $$\Gamma = \langle t \rangle \ast \Gamma^{\prime},$$ where $$\Gamma^{\prime}= \langle x,s_{i}, (3\leq i \leq k) \mid p \rangle.$$ 

Since $asdim ( G_{1} \ast G_{2} ) = max \lbrace asdim G_{1}, asdim G_{2} \rbrace$ holds (see \cite{BD04}) we have that

$$asdim\Gamma = max \lbrace 1 , asdim \Gamma^{\prime} \rbrace.$$

Then by the
inductive assumption for
$p$ we have that $asdim\Gamma^{\prime} \leq 2$. Finally, we conclude that $$asdimG \leq  asdim\Gamma \leq 2.$$


\end{proof}

\subsection{One relator groups with asymptotic dimension two.}
We recall that a nontrivial group $H$ is \textit{freely indecomposable} if $H$ can not be expressed as a free product of two non-trivial groups.

A natural question derived from Theorem \ref{3.1} is, which one relator groups have asymptotic dimension two.\\
In this subsection, we will show that the asymptotic dimension of every finitely generated one relator group that is not a free group or a free product of a free group and a finite cyclic group is exactly two.

We will use following propositions \ref{3.2} and \ref{3.3} from \cite{FKS} and \cite{S} respectively.
\begin{prop}\label{3.2}
Let $G$ be an infinite finitely generated one relator group with torsion. If $G$ has more than one ends, then $G$ is a free product of a nontrivial free group and a freely indecomposable one relator group. 
\end{prop}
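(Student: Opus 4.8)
The plan is to combine Stallings' structure theorem for groups with more than one end with the special structure of the finite subgroups of a one-relator group with torsion, and then to read off a free-product decomposition via the theorems of Grushko and Shenitzer. Write $G = \langle S \mid w^{n}\rangle$ with $n \geq 2$ and $w$ cyclically reduced and not a proper power; this is the standard form of a one-relator group with torsion. I would freely use the following structural facts due to B.B. Newman: $G$ is hyperbolic, every finite subgroup of $G$ is conjugate into the cyclic group $\langle w\rangle \cong \mathbb{Z}/n\mathbb{Z}$, and $\langle w\rangle$ is malnormal in $G$.

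First I would apply Stallings' ends theorem: since $G$ is finitely generated with more than one end, it splits nontrivially over a finite subgroup, i.e. it acts minimally and without global fixed point on a tree $T$ with finite edge stabilizers. The central claim is that this splitting can be taken over the trivial group, so that $G$ is a nontrivial free product. To prove it I would show that every finite edge stabilizer $C$ is forced to be trivial. Being finite, $C$ is conjugate into $\langle w\rangle$, so after conjugation $C \leq \langle w\rangle$; malnormality of $\langle w\rangle$ then tightly controls how $C$ can lie in the two vertex groups adjacent to an edge (or the two ends of an HNN edge), since distinct conjugates of $\langle w\rangle$ meet trivially and the normalizer of a nontrivial $C \leq \langle w\rangle$ is contained in $\langle w\rangle$. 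Analysing the action of the finite group $\langle w\rangle$ on $T$ (it fixes a vertex, and malnormality forbids $C$ from being shared nontrivially across an edge) yields a contradiction unless $C = 1$. The same computation shows $G$ has no nontrivial finite normal subgroup, which excludes the two-ended case: $G$ is not of the form (finite)$\rtimes \mathbb{Z}$, and the infinite dihedral type is not one-relator. Hence more than one end forces $G$ to be a nontrivial free product.

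Once $G$ is known to be a nontrivial free product, I would invoke Grushko's decomposition theorem to write $G = G_{1} * \cdots * G_{k} * F_{s}$ with each $G_{i}$ freely indecomposable and not infinite cyclic and $F_{s}$ free, and then Shenitzer's theorem on free decompositions of one-relator groups: such a group is a nontrivial free product precisely when, after an automorphism of the ambient free group, the relator is supported on a proper subset of a basis. Applied repeatedly, this forces exactly one Grushko factor to be a one-relator group $H = \langle S' \mid w'^{n}\rangle$, which is freely indecomposable and still contains the order-$n$ element $w'$, while every other factor is infinite cyclic. Thus $G = H * F$ with $H$ a freely indecomposable one-relator group with torsion and $F$ free. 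Finally $F$ is nontrivial: applying the first two steps to $H$, free indecomposability means $H$ admits no free-product splitting, hence no splitting over a finite subgroup, so by Stallings $H$ is one-ended; were $F$ trivial, $G = H$ would be one-ended, contradicting the hypothesis. Therefore $G = F * H$ with $F$ a nontrivial free group, as claimed.

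The main obstacle is the second step, namely controlling splittings over nontrivial finite subgroups. It is precisely the malnormality of the torsion subgroup $\langle w\rangle$, together with the fact that all torsion is confined to a single conjugacy class of cyclic subgroups, that prevents $G$ from being freely indecomposable yet multi-ended. A general hyperbolic group can split over a nontrivial finite subgroup while remaining freely indecomposable, so this step genuinely uses the one-relator-with-torsion hypothesis and cannot be replaced by a purely end-theoretic argument.
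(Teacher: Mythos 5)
The paper never proves this proposition; it is quoted directly from Fischer--Karrass--Solitar \cite{FKS}, whose proof runs by induction through the Magnus hierarchy. So your attempt must stand as a free-standing proof, and it has a genuine gap at its central step. You claim that in any splitting of $G$ over a finite subgroup $C$, conjugating $C$ into $\langle w\rangle$ and invoking malnormality of $\langle w\rangle$ ``yields a contradiction unless $C=1$.'' This is false, and no details could fix it, because there is a counterexample satisfying all your standing hypotheses: $G=\langle a,c \mid c^{2}\rangle \cong \mathbb{Z}/2 * \mathbb{Z}$ is an infinite finitely generated one-relator group with torsion and infinitely many ends, $\langle w\rangle=\langle c\rangle$ is malnormal and contains all torsion up to conjugacy, and yet $G$ splits nontrivially over $\mathbb{Z}/2$: it is the HNN extension of $D_{\infty}=\langle c\rangle * \langle d\rangle$ with stable letter $t$ identifying $\langle d\rangle$ with $\langle c\rangle$, since $\langle c,d,t \mid c^{2},d^{2},tdt^{-1}=c\rangle$ becomes $\langle c,t\mid c^{2}\rangle$ after eliminating $d$. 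The edge group of this splitting is exactly the malnormal subgroup $\langle w\rangle$; malnormality is perfectly compatible with such splittings (the stable letter carries one edge group to a \emph{different} conjugate of $\langle w\rangle$, which is precisely what malnormality allows). Consequently your final step also collapses, since the nontriviality of the free factor $F$ is deduced by applying this same false claim to $H$.

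The gap is not a local slip but the entire content of the theorem. In general, a multi-ended group need not be a nontrivial free product (e.g.\ $SL_{2}(\mathbb{Z})=\mathbb{Z}/4 *_{\mathbb{Z}/2}\mathbb{Z}/6$ is freely indecomposable with infinitely many ends), so Stallings' theorem alone cannot give the free-product decomposition: one must show that an infinite \emph{freely indecomposable} one-relator group with torsion admits no splitting over a finite subgroup at all, i.e.\ is one-ended. That statement is exactly what \cite{FKS} prove, and they do it by the Magnus hierarchy, not by an ends-plus-malnormality argument. The parts of your proposal that do work are the reduction via the Grushko/Shenitzer-type decomposition (this is Proposition \ref{3.6} in the paper, from Lyndon--Schupp), which splits off $G=G_{1}*G_{2}$ with $G_{1}$ a freely indecomposable one-relator group and $G_{2}$ free, and the observation that malnormality of $\langle w\rangle$ rules out nontrivial finite normal subgroups (hence the two-ended case). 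But the bridge from ``more than one end'' to ``the free part $G_{2}$ is nontrivial'' requires the one-endedness theorem of \cite{FKS}, which your argument assumes in disguise rather than proves.
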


\begin{prop}\label{3.3}
Let $G$ be a torsion free infinite finitely generated group. If $G$ is virtually free, then it is free.
\end{prop}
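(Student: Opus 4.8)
The plan is to combine the structure theory of virtually free groups with Stallings' theorem on ends and Grushko's decomposition theorem. By definition $G$ contains a free subgroup $H$ of finite index, and since $G$ is infinite, $H$ is a nontrivial free group. The strategy is to pass to the Grushko decomposition of $G$ and show that it has no freely indecomposable, non-cyclic factor; this forces $G$ to be free.

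The key input is a count of ends. For any infinite, finitely generated, torsion-free, virtually free group $K$, the number of ends of $K$ equals that of its finite-index free subgroup, because the number of ends is invariant under passage to a finite-index subgroup. A nontrivial free group has two ends if it is infinite cyclic and infinitely many ends otherwise, so $K$ has either two or infinitely many ends; in particular $K$ is never one-ended. If $K$ has infinitely many ends, then by Stallings' theorem it splits nontrivially over a finite subgroup; torsion-freeness forces this subgroup to be trivial, and a splitting over the trivial group is precisely a nontrivial free-product decomposition (an HNN-extension over the trivial group being $A\ast\mathbb{Z}$). If instead $K$ has exactly two ends, then $K$ is virtually infinite cyclic, and a torsion-free virtually infinite cyclic group is itself infinite cyclic.

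I would now apply this to the Grushko decomposition $G=G_{1}\ast\cdots\ast G_{n}\ast F_{k}$, in which each $G_{i}$ is freely indecomposable, nontrivial and not infinite cyclic. Each $G_{i}$ is finitely generated by Grushko's theorem, is torsion-free as a subgroup of $G$, and is virtually free because its intersection with $H$ is a finite-index free subgroup. Being nontrivial and torsion-free, $G_{i}$ is infinite, so by the previous paragraph it has two or infinitely many ends. Infinitely many ends would exhibit $G_{i}$ as a nontrivial free product, contradicting free indecomposability; two ends would make $G_{i}$ infinite cyclic, contradicting the choice of factors. Hence no such $G_{i}$ exists, so $G=F_{k}$ is free.

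The one genuinely deep ingredient is Stallings' theorem on the structure of finitely generated groups with more than one end, and this is the step I expect to be the main obstacle to make fully rigorous; termination of the argument is automatic since Grushko's theorem bounds the length of any free-product decomposition of a finitely generated group. I note there is a shorter cohomological route: the free subgroup $H$ has cohomological dimension at most one, so Serre's theorem gives $cd(G)=cd(H)\leq 1$ because $G$ is torsion-free with $H$ of finite index, and then the Stallings--Swan theorem, that every group of cohomological dimension at most one is free, yields the conclusion immediately.
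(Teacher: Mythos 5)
Your proof is correct, but note that the paper does not actually prove Proposition \ref{3.3}: it quotes the result from Stallings \cite{S}, so the argument the paper implicitly relies on is precisely the cohomological shortcut you mention at the end --- the finite-index free subgroup $H$ has $\mathrm{cd}(H)\leq 1$, Serre's theorem for torsion-free groups with a finite-index subgroup gives $\mathrm{cd}(G)=\mathrm{cd}(H)\leq 1$, and the Stallings--Swan theorem then yields that $G$ is free. Your main argument is a genuinely different, geometric route: invariance of the number of ends under passage to a finite-index subgroup, Stallings' splitting theorem for groups with more than one end (with torsion-freeness forcing the finite edge group to be trivial, so the splitting is a nontrivial free-product decomposition), the classification of two-ended torsion-free groups as $\mathbb{Z}$, and the Grushko decomposition to organize the contradiction and guarantee termination. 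The steps all check out: each Grushko factor $G_{i}$ is finitely generated (by Grushko's rank formula), torsion-free (as a subgroup of $G$), and virtually free (since $G_{i}\cap H$ has finite index in $G_{i}$ and is free by Nielsen--Schreier), so your ends dichotomy rules out every freely indecomposable non-cyclic factor, leaving $G=F_{k}$. What your geometric route buys is a self-contained proof in the same Bass--Serre/splitting language used throughout the paper, at the cost of invoking Stallings' ends theorem and Grushko; what the cohomological route buys is brevity, and it is the argument behind the paper's actual citation --- you correctly identified both.
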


\begin{lem}\label{3.4}
Let $G$ be an infinite finitely generated one relator group such that is not a free group or a free product of a nontrivial free group and a freely indecomposable one relator group. Then $G$ is not virtually free.
\end{lem}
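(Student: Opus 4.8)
The plan is to argue by contradiction: assume $G$ is virtually free and derive that $G$ must in fact be one of the two excluded types, namely a free group or a free product of a nontrivial free group and a freely indecomposable one relator group. The argument splits according to whether or not $G$ has torsion, and the two propositions quoted just above are tailored precisely to these two cases, so the whole proof is a clean dichotomy.

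First I would dispose of the torsion-free case. If $G$ is torsion free, then $G$ is an infinite, finitely generated, torsion-free group that we are assuming to be virtually free. Proposition \ref{3.3} then forces $G$ to be free, which contradicts the standing hypothesis that $G$ is not a free group.

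Next, suppose $G$ has torsion. The key point to establish here is that $G$ has \emph{more than one end}. For this I would invoke the structure theory of virtually free groups: a finitely generated virtually free group is the fundamental group of a finite graph of finite groups, and since $G$ is infinite this graph of groups cannot reduce to a single finite vertex group, so $G$ splits nontrivially over a finite subgroup. By Stallings' ends theorem this splitting forces $G$ to have more than one end. With this in hand, $G$ satisfies all the hypotheses of Proposition \ref{3.2} (it is an infinite, finitely generated, one relator group with torsion and more than one end), and that proposition yields that $G$ is a free product of a nontrivial free group and a freely indecomposable one relator group — contradicting the hypothesis on $G$. Since both the torsion-free and the torsion cases lead to a contradiction, $G$ cannot be virtually free.

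The main obstacle is the ends argument in the torsion case, i.e. knowing that an infinite finitely generated virtually free group always has more than one end; this is the only step that is not purely formal, resting on the identification of virtually free groups with fundamental groups of finite graphs of finite groups together with Stallings' theorem. Once this fact is granted, everything else is a routine case split funnelling $G$ into Proposition \ref{3.2} when there is torsion and into Proposition \ref{3.3} when there is not.
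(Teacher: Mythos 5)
Your proof is correct and takes essentially the same route as the paper: the same torsion/torsion-free dichotomy, with Proposition \ref{3.3} disposing of the torsion-free case and Proposition \ref{3.2} combined with the fact that an infinite finitely generated virtually free group has more than one end handling the torsion case. The only difference is cosmetic --- the paper applies Proposition \ref{3.2} in contrapositive form (concluding $G$ is one-ended, hence not virtually free), whereas you run a global contradiction argument and spell out the ends fact that the paper leaves implicit.
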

\begin{proof}
If $G$ has torsion, by proposition \ref{3.2} we have that $G$ has exactly one end, so $G$ can not be virtually free. If $G$ is torsion free, by proposition \ref{3.3} we obtain that $G$ is free and this is a contradiction by the assumption of the lemma.
\end{proof}

We note that every finite one relator group is cyclic. To see that it is enough to observe that every one relator group with at least two generators has infinite abelianization.

The following proposition is the main result of this subsection.

\begin{prop}\label{3.5}
Let $G$ be a finitely generated one relator group that is not a free group or a free product of a free group and a finite cyclic group. Then 
\begin{center}
$asdim\,G = 2$.
\end{center}
\end{prop}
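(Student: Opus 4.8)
The plan is to combine the upper bound already in hand with a matching lower bound obtained by excluding the only smaller possible values of the asymptotic dimension. By Theorem \ref{3.1} we have $\asdim G \leq 2$, so it suffices to prove $\asdim G \geq 2$. First I would record that $G$ is infinite: a finite one relator group is cyclic, and a finite cyclic group $C_n$ is the free product of the trivial free group with $C_n$, which is excluded by the hypothesis; hence $G$ is infinite and $\asdim G \geq 1$. It therefore remains only to rule out $\asdim G = 1$. For this I would invoke the theorem of Gentimis that a finitely presented group of asymptotic dimension one is virtually free; since a one relator group is finitely presented, it is enough to show that $G$ is \emph{not} virtually free.

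To show that $G$ is not virtually free I would split into two cases according to whether the hypothesis of Lemma \ref{3.4} is met. If $G$ is not a free product of a nontrivial free group and a freely indecomposable one relator group, then, together with the standing assumption that $G$ is not free, $G$ satisfies the hypotheses of Lemma \ref{3.4}, and that lemma gives at once that $G$ is not virtually free.

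In the remaining case $G \cong F \ast K$ where $F$ is a nontrivial free group and $K$ is a freely indecomposable one relator group, and here I would analyse the factor $K$. If $K$ were finite it would be cyclic (a finite one relator group is cyclic), giving $G \cong F \ast C_n$, which the hypothesis forbids; and if $K \cong \mathbb{Z}$ then $G \cong F \ast \mathbb{Z}$ would be free, again forbidden. Hence $K$ is infinite and is not $\mathbb{Z}$; being freely indecomposable it is then not free, so $K$ is an infinite finitely generated one relator group that is neither free nor a nontrivial free product. Thus Lemma \ref{3.4} applies to $K$ and shows that $K$ is not virtually free. Since $K$ is a subgroup (indeed a free factor) of $G$ and virtual freeness passes to subgroups, $G$ cannot be virtually free either.

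In both cases $G$ is infinite, finitely presented, and not virtually free, so $\asdim G \neq 1$ by Gentimis's theorem; as $G$ is infinite we also have $\asdim G \neq 0$, whence $\asdim G \geq 2$, and with Theorem \ref{3.1} this yields $\asdim G = 2$. I expect the only genuinely external ingredient to be the input that asymptotic dimension one forces virtual freeness; the remaining work, namely the clean logical dichotomy through Lemma \ref{3.4}, the elementary analysis of the free factor $K$, and the inheritance of virtual freeness by subgroups, is routine bookkeeping.
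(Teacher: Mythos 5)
Your proposal is correct and follows essentially the same route as the paper: Theorem \ref{3.1} for the upper bound, Gentimis's theorem to reduce the lower bound to showing $G$ is not virtually free, and then Lemma \ref{3.4} together with the fact that subgroups of virtually free groups are virtually free. The only difference is organizational — you case-split on whether $G$ decomposes as a free product $F \ast K$ and apply Lemma \ref{3.4} directly in each branch, while the paper argues by contradiction from virtual freeness, splitting on torsion and using Proposition \ref{3.3} plus the contrapositive of Lemma \ref{3.4} — but the logical content is identical.
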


\begin{proof}
By Theorem \ref{3.1} we have that $asdim\,G \leq 2$. If $G$ is finite then it is cyclic.
If $G$ is infinite we have that $1 \leq asdim\,G$. By a theorem of Gentimis (\cite{Ge}) we have that  $asdim\,G=1$ if and only if $G$ is virtually free.\\
We assume that $G$ is an infinite virtually free group. So if $G$ is torsion free then by proposition \ref{3.3} we obtain that $G$ is free. If $G$ has torsion then by lemma \ref{3.4} $G$ is a free product
of a nontrivial free group and a freely indecomposable one relator group $G_1$. Observe that $G_1$ is an infinite noncyclic group, then by the same lemma $G_1$ is not 
virtually free so $G$ is not virtually free either, which is a contradiction.\\
We conclude that $asdim\,G = 2$.
\end{proof}
\begin{cor} \textit{Let $G$ be a finitely generated freely indecomposable one relator group which is not cyclic. Then
\begin{center}
$asdim\,G = 2$. 
\end{center}}
\end{cor}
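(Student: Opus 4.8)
The plan is to reduce this corollary to Proposition \ref{3.5}, whose hypotheses I will verify directly from free indecomposability together with non-cyclicity. Recall that Proposition \ref{3.5} concludes $asdim\,G = 2$ for every finitely generated one relator group $G$ that is neither a free group nor a free product of a free group and a finite cyclic group. So it suffices to rule out both of these two exceptional possibilities for our $G$.

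First I would show that $G$ is not a free group. A free group that is freely indecomposable must have rank at most one, since a free group of rank $\geq 2$ splits as a nontrivial free product $\mathbb{Z} \ast \cdots \ast \mathbb{Z}$; hence a freely indecomposable free group is either trivial or infinite cyclic, and in both cases it is cyclic. Since $G$ is assumed non-cyclic, $G$ cannot be free.

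Next I would show that $G$ is not a free product of a nontrivial free group and a finite cyclic group. Suppose instead $G \cong F \ast C$ with $F$ free and $C$ finite cyclic. Because any free product of two nontrivial groups is freely decomposable, free indecomposability of $G$ forces one of the factors to be trivial. If $C$ is trivial then $G \cong F$ is free, which we have just excluded; if $F$ is trivial then $G \cong C$ is finite cyclic, contradicting the non-cyclic hypothesis. Either way we reach a contradiction, so $G$ lies outside this second exceptional family as well.

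With both exceptional cases eliminated, $G$ meets the hypotheses of Proposition \ref{3.5}, and I conclude $asdim\,G = 2$. I do not expect any genuine obstacle here: the substance is entirely contained in Proposition \ref{3.5} and Theorem \ref{3.1}, and this corollary amounts only to confirming that a freely indecomposable non-cyclic one relator group cannot accidentally belong to the two degenerate families, namely free groups and free products having a finite cyclic factor, that the proposition excludes.
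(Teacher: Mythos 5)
Your proposal is correct and matches the paper's intended argument: the corollary is stated there as an immediate consequence of Proposition \ref{3.5}, and your verification that a freely indecomposable non-cyclic group can be neither free nor a free product of a free group with a finite cyclic group is exactly the (implicit) reasoning the paper relies on. Nothing is missing; you have simply written out the routine check the paper leaves to the reader.
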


The following proposition can be found in \cite{LynSch} (prop. 5.13, page 107).
\begin{prop}\label{3.6}
Let $G=\langle x_1, \ldots ,x_n \vert r \rangle$ be a finitely generated one relator group, where $r$ is of minimal lenght under $Aut(F(\lbrace x_1, \ldots ,x_n \rbrace))$ and contains exactly the generators $x_1, \ldots ,x_k$ for some $k$, $0 \leq k \leq n$. Then $G$ is isomorphic to the free product $G_1 \ast G_2$, where $G_1=\langle x_1, \ldots ,x_k \vert r \rangle$ is freely indecomposable and $G_2$ is free with basis $\lbrace x_{k+1}, \ldots ,x_{n} \rbrace$.

\end{prop}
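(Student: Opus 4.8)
The plan is to treat the two assertions separately. Splitting off the free factor $G_2$ is the routine part: since $r$ is a word in $x_1,\dots,x_k$ only, the generators $x_{k+1},\dots,x_n$ occur in no relation, so the presentation visibly decomposes. Concretely, writing $F=F(x_1,\dots,x_k)\ast F(x_{k+1},\dots,x_n)$ and using that the normal closure of an element $r\in F(x_1,\dots,x_k)$ inside this free product satisfies $(F)/\langle\langle r\rangle\rangle\cong\big(F(x_1,\dots,x_k)/\langle\langle r\rangle\rangle\big)\ast F(x_{k+1},\dots,x_n)$, one gets $G\cong\langle x_1,\dots,x_k\mid r\rangle\ast F(x_{k+1},\dots,x_n)=G_1\ast G_2$ directly from the universal property. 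All the content is in showing that $G_1$ is freely indecomposable, and I would argue this by contradiction, using Grushko's theorem to place the relator inside a free-product-adapted basis.

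Suppose then that $G_1=A\ast B$ with $A,B$ nontrivial, and let $\phi\colon F_k:=F(x_1,\dots,x_k)\twoheadrightarrow G_1$ be the canonical epimorphism, so that $\ker\phi=\langle\langle r\rangle\rangle$. By the strong form of Grushko's theorem, $\phi$ admits a Grushko decomposition: there is a free factorization $F_k=F_A\ast F_B$ with $\phi(F_A)=A$ and $\phi(F_B)=B$. Comparing a basis of $F_A\ast F_B$ with $x_1,\dots,x_k$ produces an automorphism $\alpha\in\mathrm{Aut}(F_k)$ (extended by the identity on $x_{k+1},\dots,x_n$, hence an element of $\mathrm{Aut}(F(\{x_1,\dots,x_n\}))$) such that $r':=\alpha(r)$ lies in $F_A\ast F_B$ and again normally generates $\ker\phi=\langle\langle\ker(\phi|_{F_A}),\ker(\phi|_{F_B})\rangle\rangle$.

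The key step is to show that $r'$ is conjugate into one of the factors $F_A$ or $F_B$. Write $r'$ in cyclically reduced form in $F_A\ast F_B$; if its syllable length were at least two, I would apply the retractions $\rho_A\colon F_A\ast F_B\to F_A$ and $\rho_B\colon F_A\ast F_B\to F_B$ and use that the relation module $\ker\phi/[\ker\phi,\ker\phi]$ is a cyclic $\mathbb{Z}[G_1]$-module (because $\ker\phi$ is normally generated by the single element $r'$), together with the Kurosh description of $\ker\phi$ as a subgroup of $F_A\ast F_B$, to force at least one of the factor-kernels $\ker(\phi|_{F_A}),\ker(\phi|_{F_B})$ to be trivial; a short normal-form analysis of the equation $\phi(r')=1$ in $A\ast B$ then pins $r'$, up to conjugacy, inside the remaining factor. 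Hence a conjugate $w=g\,r'\,g^{-1}$ lies in, say, $F_A$, and so involves only the basis elements of $F_A$, that is, strictly fewer than $k$ of the generators.

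Finally I would convert this into a contradiction with minimality. Since conjugation is an inner automorphism, $w=(\mathrm{inn}_g\circ\alpha)(r)$ belongs to the $\mathrm{Aut}(F(\{x_1,\dots,x_n\}))$-orbit of $r$, and $w$ involves fewer than $k$ generators. But a length-minimal representative of an automorphism orbit realizes the minimal number of generators occurring over the whole orbit (a consequence of Whitehead's peak-reduction theorem); thus the minimal-length relator $r$ would itself involve fewer than $k$ generators, contradicting the hypothesis that it involves exactly $x_1,\dots,x_k$. This contradiction shows that $G_1$ is freely indecomposable and completes the proof. I expect the genuine obstacle to be the key step, namely controlling how a single normal generator can sit inside a Grushko free-product splitting; by contrast, the splitting of $G_2$ and the final appeal to minimality are comparatively routine once the strong Grushko theorem and Whitehead's theorem are available.
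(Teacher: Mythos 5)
The paper does not actually prove this proposition: it is quoted verbatim from Lyndon--Schupp (Proposition II.5.13, the reference the paper cites), so your attempt must be judged against the classical argument rather than against anything in the text. Your skeleton is the right classical one, and several pieces are correct as stated: the splitting $G\cong G_1\ast G_2$, the use of the strong form of Grushko's theorem to get $F_k=F_A\ast F_B$ with $\phi(F_A)=A$, $\phi(F_B)=B$, the retraction observation (which, pushed slightly further, shows that $A=F_A/\langle\langle\rho_A(r')\rangle\rangle$ and $B=F_B/\langle\langle\rho_B(r')\rangle\rangle$ are themselves one-relator groups), and the closing step --- that a length-minimal element of an $\mathrm{Aut}(F)$-orbit involves the fewest generators occurring anywhere in the orbit --- is a genuine, correctly invoked consequence of Whitehead peak reduction.

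Both halves of your ``key step,'' however, are gaps, and the first is wrong as reasoned. First, cyclicity of the relation module together with the Kurosh/induced-module decomposition does \emph{not} ``force at least one of the factor-kernels to be trivial.'' A cyclic module over $\mathbb{Z}[A\ast B]$ can perfectly well decompose as a direct sum of two nonzero induced modules (this Chinese-remainder phenomenon is precisely what makes the relation gap problem hard), and even if one proves that a homological invariant such as the coinvariants $N_A/[N_A,F_A]$ vanishes, that alone does not give $N_A=1$. A repair exists, but it needs genuine extra input that is absent from your sketch: since the factors are one-relator groups, $H_2(A),H_2(B)\in\{0,\mathbb{Z}\}$; the five-term exact sequence identifies the coinvariants $N_X/[N_X,F_X]$; cyclicity of their direct sum then leaves only the alternatives ``one summand vanishes'' or ``both are finite, nontrivial and coprime''; and Stallings' theorem on homology and the lower central ($p$-)series, combined with residual nilpotence and residual $p$-finiteness of free groups, both rules out the finite case and upgrades vanishing coinvariants to actual triviality $N_X=1$. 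Second, once $N_B=1$ is known, your ``short normal-form analysis of the equation $\phi(r')=1$'' cannot pin $r'$ into $F_A$ up to conjugacy: \emph{every} element of $\ker\phi=\langle\langle N_A\rangle\rangle$ satisfies that equation, and most such elements are not conjugate into $F_A$. What you must exploit is normal generation, namely $\langle\langle r'\rangle\rangle=\langle\langle\rho_A(r')\rangle\rangle$ in $F_A\ast F_B$, and then invoke Magnus' theorem that two elements of a free group with the same normal closure are conjugate up to inversion (Lyndon--Schupp, Proposition II.5.8, directly preceding the cited result); that is a deep theorem whose proof is comparable to the Freiheitssatz, not a normal-form exercise. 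With these two repairs --- the Stallings-type argument for killing one factor kernel, and Magnus' normal-closure theorem for the conjugacy --- your outline does close up into a correct proof.
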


Using the above results we sum up to following corollary which describes the finitely generated one relator groups.
\begin{cor} \textit{Let $G$ be a finitely generated one relator group. Then one of the following is true}:\\
\textbf{(i)} \textit{$G$ is finite cyclic, and $asdim\,G = 0$} \\
\textbf{(ii)} \textit{$G$ is a nontrivial free group or a free product of a nontrivial free group and a finite cyclic group, and $asdim\,G = 1$}\\
\textbf{(iii)} \textit{$G$ is an infinite freely indecomposable not cyclic group or a free product of a nontrivial free group and an infinite freely indecomposable not cyclic group, and $asdim\,G = 2$.}
\end{cor}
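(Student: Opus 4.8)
The plan is to feed the structural decomposition of one relator groups (Proposition \ref{3.6}) into the free-product formula for asymptotic dimension (Theorem \ref{2.10}), and then read off the three cases from the nature of the freely indecomposable factor. First I would apply Proposition \ref{3.6}: after replacing $r$ by a word of minimal length in its $\mathrm{Aut}(F)$-orbit, write $G \cong G_{1} \ast G_{2}$, where $G_{1}=\langle x_{1},\dots,x_{k}\mid r\rangle$ is freely indecomposable and $G_{2}$ is free of rank $n-k$. Since a free group has asymptotic dimension $0$ when trivial and $1$ otherwise, we have $asdim G_{2}\leq 1$, and Theorem \ref{2.10} gives $asdim G = \max\{asdim G_{1}, asdim G_{2}\}$, so the whole computation is governed by $G_{1}$.

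Next I would run a short case analysis on $G_{1}$, which, being freely indecomposable, falls into the following types. If $G_{1}$ is trivial (that is, $r$ is empty), then $G = G_{2}$ is free, landing in case (i) when $G_{2}$ is trivial and in case (ii) otherwise. If $G_{1}$ is finite, then by the remark that every finite one relator group is cyclic it is a nontrivial finite cyclic group $\mathbb{Z}/m$; then $G = \mathbb{Z}/m \ast G_{2}$ is finite cyclic (case (i)) when $G_{2}$ is trivial and is a free product of a nontrivial free group with a finite cyclic group (case (ii)) otherwise, and in either event $asdim G = \max\{0, asdim G_{2}\}\leq 1$. If $G_{1}$ is infinite and not cyclic, then the preceding Corollary gives $asdim G_{1}=2$, so $asdim G = 2$; here $G$ is $G_{1}$ itself or a free product of a nontrivial free group with $G_{1}$, which is exactly case (iii).

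The one point needing care, and the main obstacle, is to confirm that these sub-cases for $G_{1}$ are exhaustive, in particular that an infinite cyclic factor $G_{1}\cong\mathbb{Z}$ cannot escape the trichotomy. I would handle this by noting that such a factor has $asdim G_{1}=1$ and makes $G=\mathbb{Z}\ast G_{2}$ a free group, so it is absorbed into case (ii); equivalently, because $r$ is nonempty and of minimal length, $G_{1}$ may be taken non-cyclic whenever it is infinite. Assembling the three cases then yields both the stated dichotomy of isomorphism types and the exact values $asdim G \in \{0,1,2\}$, completing the classification.
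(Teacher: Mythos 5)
Your proposal is correct and takes essentially the same route as the paper: the paper obtains this corollary precisely by combining the decomposition $G \cong G_1 \ast G_2$ of Proposition \ref{3.6} with the free-product formula of Theorem \ref{2.10}, the remark that every finite one relator group is cyclic, and Proposition \ref{3.5} (through the preceding corollary) for the $\asdim = 2$ case. Your explicit treatment of a possible infinite cyclic factor $G_1 \cong \mathbb{Z}$, absorbed into case (ii) because $\mathbb{Z} \ast G_2$ is free, just fills in the one small detail the paper leaves unstated.
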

We could further describe the boundaries of hyperbolic one relator groups.
We recall the following result of Buyalo and Lebedeva (see \cite{BL}) for hyperbolic groups:
\begin{center}
$asdim G = dim \partial_{\infty}G + 1$.
\end{center}   
  
Let $G$ be an infinite finitely generated hyperbolic one relator group that is not virtually free. By T.Gentimis (\cite{Ge}) we obtain that $asdim G \neq 1$,so $asdim G =2$. Using the previous equality we obtain that $G$ has one dimensional boundary.\\ Applying a theorem of M. Kapovich and B. Kleiner (see \cite{KK}) we can describe the boundaries of hyperbolic one relator groups.

\begin{prop}\label{3.7}
Let $G$ be a hyperbolic one relator group. Then $asdim\,G= 0 ,1 $or $2$.\\ 
\textbf{(i)} If $asdim\,G=0$, then $G$ is finite.\\
\textbf{(ii)} If $asdim\,G=1$, then $G$ is virtually free and the boundary is a Cantor set.\\
\textbf{(iii)} If $asdim\,G=2$ providing that $G$ does not split over a virtually
cyclic subgroup, then one of the following holds:\\
1. $\partial_{\infty}G$ is a Menger curve.\\
2. $\partial_{\infty}G$ is a Sierpinski carpet.\\
3. $\partial_{\infty}G$ is homeomorphic to $S^{1}$.
\end{prop}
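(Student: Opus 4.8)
The plan is to assemble the results already collected in this section, since the genuine content lives in the theorems cited earlier and the proposition is essentially their synthesis. First I would record that $asdim\,G \leq 2$ by Theorem \ref{3.1}, and since the asymptotic dimension is a non-negative integer this immediately gives $asdim\,G \in \lbrace 0,1,2 \rbrace$. The three cases (i)--(iii) then correspond exactly to these three possible values, and in each case the topology of $\partial_{\infty}G$ is read off from the dimension of the boundary through the Buyalo--Lebedeva equality $asdim\,G = \dim \partial_{\infty}G + 1$ quoted above.

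For (i), I would invoke the standard fact that a finitely generated group has asymptotic dimension $0$ if and only if it is finite: an infinite finitely generated group is unbounded in the word metric, whereas $asdim = 0$ would force uniformly bounded covers with arbitrarily separated pieces. For (ii), the hypothesis $asdim\,G = 1$ gives, by the theorem of Gentimis (\cite{Ge}), that $G$ is virtually free. Buyalo--Lebedeva simultaneously yields $\dim \partial_{\infty}G = 0$, so $\partial_{\infty}G$ is a compact, metrizable, totally disconnected space; since here $G$ is infinite and non-elementary its boundary has no isolated points, and a perfect, compact, metrizable, $0$-dimensional space is a Cantor set.

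For (iii), I assume $asdim\,G = 2$ together with the hypothesis that $G$ does not split over a virtually cyclic subgroup. Buyalo--Lebedeva then gives $\dim \partial_{\infty}G = 1$. The non-splitting hypothesis rules out splittings over finite subgroups in particular, so $G$ is one-ended. Thus $G$ is a one-ended hyperbolic group with $1$-dimensional boundary that does not split over a virtually cyclic subgroup, which is precisely the setting of the theorem of Kapovich and Kleiner (\cite{KK}); applying it concludes that $\partial_{\infty}G$ is homeomorphic to the Sierpinski carpet, the Menger curve, or $S^{1}$.

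The work is entirely in verifying hypotheses rather than in any new estimate, and the main point to get right is exactly the input to the Kapovich--Kleiner theorem: one must confirm that the absence of a virtually cyclic splitting forces one-endedness and that Buyalo--Lebedeva indeed pins $\dim \partial_{\infty}G$ to $1$. A secondary subtlety, arising in case (ii), is the perfectness of $\partial_{\infty}G$ needed to upgrade ``totally disconnected'' to ``Cantor set''; this requires $G$ to be non-elementary, since a two-ended group also has $asdim = 1$ but a two-point boundary, and I would justify non-elementarity from the structure of infinite one-relator groups that are virtually free.
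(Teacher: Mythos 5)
Your proposal is correct and follows essentially the same route as the paper: the paper gives no separate proof of this proposition, deriving it from exactly the ingredients you use (Theorem \ref{3.1}, Gentimis \cite{Ge}, the Buyalo--Lebedeva equality \cite{BL}, and Kapovich--Kleiner \cite{KK}) in the paragraph preceding the statement. You are in fact more careful than the paper on two points: you check that the no-virtually-cyclic-splitting hypothesis forces one-endedness (so that Kapovich--Kleiner genuinely applies), and you flag the perfectness issue in case (ii). On that second point, however, note that the gap you identify cannot actually be closed: $\mathbb{Z} \cong \langle a,b \mid b \rangle$ is a one relator group, hyperbolic, virtually free, with $asdim\,\mathbb{Z}=1$, yet its boundary consists of two points rather than a Cantor set; so non-elementarity does not follow from the structure of infinite virtually free one relator groups, and part (ii) of the proposition is false as literally stated in this elementary (two-ended) case. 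This is a defect of the paper's statement, which it silently ignores, rather than of your synthesis; your argument for (ii) is valid precisely for the non-elementary case, and the rest of your proof stands as written.
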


\section{Graphs of Groups.}
We will prove a general theorem for the asymptotic dimension of fundamental groups of finite graphs of groups.

\begin{thm}\label{4.1}
Let $(\mathbb{G}, Y)$ be a finite graph of groups with vertex groups $\lbrace G_{v} \mid v \in Y^{0} \rbrace$ and edge groups $\lbrace G_{e} \mid e \in Y^{1}_{+} \rbrace$. Then the following inequality holds:
\begin{center}
$asdim \pi_{1}(\mathbb{G},Y,\mathbb{T})  \leq max_{v \in Y^{0} ,e \in Y^{1}_{+}} \lbrace asdim G_{v}, asdim\,G_{e} +1 \rbrace.$
\end{center}

\end{thm}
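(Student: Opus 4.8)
The plan is to reduce the general finite graph of groups to a finite sequence of elementary moves—HNN-extensions and amalgamated free products—each of which only increases the asymptotic dimension in the controlled way prescribed by Theorem \ref{2.9} and by Dranishnikov's amalgamated-product inequality quoted in the introduction. The standard structure theorem of Bass--Serre theory tells us that the fundamental group $\pi_1(\mathbb{G},Y,\mathbb{T})$ of a finite graph of groups can be built up from the vertex groups by first choosing a maximal subtree $\mathbb{T}\subseteq Y$ and forming the \emph{tree of groups} fundamental group (an iterated amalgamated product along the edges of $\mathbb{T}$), and then adjoining one stable letter (one HNN-extension) for each remaining edge of $Y$ not in $\mathbb{T}$. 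I would make this decomposition the backbone of the argument and induct on the number of edges of $Y$.

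First I would set $n=\max_{v\in Y^0,\,e\in Y^1_+}\{asdim G_v,\ asdim G_e+1\}$ and treat the base case where $Y$ is a single vertex with no edges, for which $\pi_1=G_v$ and the bound is immediate. For the inductive step I would remove an edge $e$ from $Y$ and distinguish two cases according to whether $e$ is a loop or joins two distinct vertices. If $e$ lies in the maximal subtree $\mathbb{T}$ and joins distinct vertices $v,w$, then collapsing $e$ expresses $\pi_1(\mathbb{G},Y)$ as the amalgamated free product $A\ast_{G_e}B$, where $A$ and $B$ are fundamental groups of the two graphs of groups obtained by splitting $Y$ along $e$; by the inductive hypothesis $asdim A,\,asdim B\le n$, and applying Dranishnikov's amalgam inequality (the amalgamated analogue of Theorem \ref{2.9}, cited from \cite{Dra08}) gives $asdim(A\ast_{G_e}B)\le\max\{asdim A,\ asdim B,\ asdim G_e+1\}\le n$. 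If instead $e$ is an edge of $Y$ not in $\mathbb{T}$ (necessarily a loop after collapsing $\mathbb{T}$, or more directly an edge giving a stable letter), then removing it exhibits $\pi_1(\mathbb{G},Y)$ as an HNN-extension $H\ast_{G_e}$ of the fundamental group $H$ of the smaller graph of groups over the edge group $G_e$; here $asdim H\le n$ by induction, and Theorem \ref{2.9} yields
\begin{center}
$asdim\,\pi_1(\mathbb{G},Y)=asdim\,H\ast_{G_e}\le\max\{asdim H,\ asdim G_e+1\}\le n.$
\end{center}

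The main obstacle I anticipate is bookkeeping rather than genuine difficulty: one must verify that each intermediate object $A$, $B$, $H$ is itself the fundamental group of a \emph{finite} graph of groups whose vertex and edge groups form a subset of the original ones, so that the quantity $n$ does not grow under the induction and the inductive hypothesis applies verbatim. This is exactly the content of the fact that a splitting of $Y$ along an edge, or the deletion of a non-tree edge, produces graphs of groups with strictly fewer edges but the same (or fewer) vertex and edge groups. I would spell this out carefully, noting that finiteness of $Y$ guarantees the induction terminates. A minor point worth stating explicitly is that the edge groups $G_e$ embed in the relevant vertex groups via the attaching monomorphisms, so $asdim G_e$ and the term $asdim G_e+1$ are well defined and the hypotheses of Theorem \ref{2.9} and of the amalgam inequality are met. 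Once the decomposition is in place, the inequality follows by a routine application of the two building-block estimates at each step.
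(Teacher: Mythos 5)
Your overall strategy---induction on the number of edges of $Y$, Dranishnikov's amalgam inequality for tree edges, Theorem \ref{2.9} for non-tree edges---is exactly the paper's. However, one step, as written, would fail: your amalgam case. You claim that if the removed edge $e$ lies in the maximal subtree $\mathbb{T}$ and joins distinct vertices, then $\pi_1(\mathbb{G},Y)$ splits as $A\ast_{G_e}B$, where $A$ and $B$ are the fundamental groups of ``the two graphs of groups obtained by splitting $Y$ along $e$.'' This presupposes that $Y\setminus e$ is disconnected, which need not hold while $Y$ still has edges outside $\mathbb{T}$. Take $Y$ with two vertices $v,w$ and two edges $e_1,e_2$ joining them, with $\mathbb{T}=\lbrace e_1\rbrace$: removing the tree edge $e_1$ leaves $Y$ connected via $e_2$, there are no ``two pieces'' (the edge $e_2$ would belong to neither), and the group is $(G_v\ast_{G_{e_1}}G_w)\ast_{G_{e_2}}$, not an amalgam over $G_{e_1}$ of two smaller graph-of-groups fundamental groups. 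So the case analysis cannot be applied to an arbitrarily chosen edge, and your bookkeeping remark (``a splitting of $Y$ along an edge \ldots produces graphs of groups with strictly fewer edges'') silently assumes the very point that breaks.

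The repair is exactly how the paper organizes its induction, so the gap is one of ordering rather than of ideas: first remove edges \emph{not} in $\mathbb{T}$ --- each such removal leaves a connected graph $\Gamma = Y\setminus e'$ still containing $\mathbb{T}$, exhibits $\pi_1(\mathbb{G},Y,\mathbb{T})$ as the HNN-extension $\pi_1(\mathbb{G},\Gamma,\mathbb{T})\ast_{G_{e'}}$, and Theorem \ref{2.9} applies. Only when $Y=\mathbb{T}$, i.e.\ the graph is a tree, do you remove a tree edge; then the removal of any edge genuinely disconnects (the paper takes a terminal edge $e'=[v,u]$, so that one factor is simply the vertex group $G_u$), the decomposition $\pi_1(\mathbb{G},Y,\mathbb{T})=\pi_1(\mathbb{G},\Gamma,\mathbb{T}^{\prime})\ast_{G_{e'}}G_u$ holds, and Dranishnikov's inequality closes the induction. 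With this ordering imposed, your argument goes through and coincides with the paper's proof.
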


\begin{proof} We use induction on the number $\sharp E(Y)$ of edges of the graph $Y$. For $\sharp E(Y)=1$ we distinguish two cases. The first case is when the fundamental group  $\pi_{1}(\mathbb{G},Y,\mathbb{T})$ is an amalgamated product, so the statement of the theorem follows by the following inequality of A.Dranishnikov  (see \cite{Dra08}) 

  \begin{center}
$asdimA \ast_{C} B \leq max \lbrace asdim A, asdim B, asdimC +1 \rbrace.$
\end{center}

The second case is when the fundamental group  $\pi_{1}(\mathbb{G},Y,\mathbb{T})$ is an HNN-extension, so the statement of the theorem follows by Theorem \ref{2.9}.\\

We assume that the statement of the theorem holds for $E(Y) \leq m$. Let $(\mathbb{G}, Y)$ be a finite graph of groups with $\sharp E(Y) = m+1$. We denote by $\mathbb{T}$ a maximal tree of $Y$.


We distinguish two cases:\\
\textbf{Case 1:} $Y= \mathbb{T}$. We remove a terminal edge $e^{\prime}=[v,u]$ from the graph $Y$ such that the full subgraph of $Y$ denoted by $\Gamma$ and formed from the vertices  $V(Y) \setminus \lbrace  u \rbrace$ is connected.
We observe that $\Gamma$ is also a tree which we denote by $\mathbb{T}^{\prime}$.

Then $\pi_{1}(\mathbb{G},Y,\mathbb{T})= \pi_{1}(\mathbb{G},\Gamma,\mathbb{T}^{\prime}) \ast_{G_{e^{\prime}}} G_{u}$, so by the inequality for amalgamated products of A.Dranishnikov (see \cite{Dra08}), we have
\begin{center}
$asdim \pi_{1}(\mathbb{G},Y,\mathbb{T}) \leq max \lbrace  asdim\pi_{1}(\mathbb{G},\Gamma,\mathbb{T}^{\prime}) ,  asdim\,G_{u} , asdim G_{e^{\prime}}+1  \rbrace$.
\end{center}
Since $\sharp E(\Gamma) = m$, by the inductive assumption we obtain that 
\begin{center}
$asdim \pi_{1}(\mathbb{G},\Gamma,\mathbb{T}^{\prime})  \leq max_{v \in Y^{0}\setminus \lbrace u \rbrace ,e \in Y^{1}_{+}\setminus \lbrace e^{\prime} \rbrace} \lbrace asdim G_{v}, asdim\,G_{e} +1 \rbrace$,
\end{center} so 
\begin{center} 
$asdim \pi_{1}(\mathbb{G},Y,\mathbb{T})  \leq max_{v \in Y^{0} ,e \in Y^{1}_{+}} \lbrace asdim G_{v}, asdim\,G_{e} +1 \rbrace.$
\end{center}
\textbf{Case 2:} $\mathbb{T} \subsetneqq Y $. We remove from $Y$ an edge $e^{\prime}=[v,u]$ which doesn't belong to $\mathbb{T}$.\\
Since the tree $\mathbb{T}$ is a maximal tree of $Y$ and $e^{\prime} \not\in E(\mathbb{T})$ we have that the graph $\Gamma=Y  \setminus e^{\prime}$ is connected and $\mathbb{T} \subseteq \Gamma$.\\
Then $\pi_{1}(\mathbb{G},Y,\mathbb{T})= \pi_{1}(\mathbb{G},\Gamma,\mathbb{T}) \ast_{G_{e^{\prime}}}$, so by the inequality for HNN-extensions (Theorem \ref{2.9}) we have
\begin{center}
$asdim \pi_{1}(\mathbb{G},Y,\mathbb{T}) \leq max \lbrace  asdim\pi_{1}(\mathbb{G},\Gamma,\mathbb{T}), asdim G_{e^{\prime}}+1  \rbrace$.
\end{center}
Since $\sharp E(\Gamma) = m$, by the inductive assumption we obtain that 
\begin{center}
$asdim \pi_{1}(\mathbb{G},\Gamma,\mathbb{T})  \leq max_{v \in Y^{0},e \in Y^{1}_{+}\setminus \lbrace e^{\prime} \rbrace} \lbrace asdim G_{v}, asdim\,G_{e} +1 \rbrace$,
\end{center} so 
\begin{center}
$asdim \pi_{1}(\mathbb{G},Y,\mathbb{T})  \leq max_{v \in Y^{0} ,e \in Y^{1}_{+}} \lbrace asdim G_{v}, asdim\,G_{e} +1 \rbrace.$
\end{center}





\end{proof}
 
We obtain as a corollary the following proposition.
\begin{prop}
Let $(\mathbb{G}, Y)$ be a finite graph of groups with \begin{center}
vertex groups $\lbrace G_{v} \mid v \in Y^{0} \rbrace$ and edge groups $\lbrace G_{e} \mid e \in Y^{1}_{+} \rbrace$.
\end{center}
 We assume that $max_{e \in Y^{1}_{+}} \lbrace  asdim\,G_{e}  \rbrace < max_{v \in Y^{0} } \lbrace asdim G_{v} \rbrace = n $. Then,

\begin{center}
$asdim \pi_{1}(\mathbb{G},Y,\mathbb{T})  = n.$
\end{center}

\end{prop}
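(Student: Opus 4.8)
The plan is to derive this proposition as a direct corollary of Theorem \ref{4.1}, combined with a matching lower bound coming from the fact that the vertex groups embed into the fundamental group. The upper bound is immediate: Theorem \ref{4.1} gives
\begin{center}
$asdim \pi_{1}(\mathbb{G},Y,\mathbb{T}) \leq max_{v \in Y^{0},\, e \in Y^{1}_{+}} \lbrace asdim G_{v}, asdim\,G_{e} +1 \rbrace$,
\end{center}
and under the hypothesis $max_{e} \lbrace asdim\,G_{e} \rbrace < max_{v} \lbrace asdim\,G_{v} \rbrace = n$ each term $asdim\,G_{e}+1$ is at most $n$ (since $asdim\,G_e \leq n-1$ for all edges $e$), while $max_v \lbrace asdim\,G_v \rbrace = n$ by assumption. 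Hence the right-hand side equals $n$, giving $asdim \pi_{1}(\mathbb{G},Y,\mathbb{T}) \leq n$.

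For the reverse inequality I would use the standard fact that every vertex group $G_v$ embeds as a subgroup of the fundamental group $\pi_{1}(\mathbb{G},Y,\mathbb{T})$, together with the monotonicity of asymptotic dimension under passage to subgroups (a finitely generated subgroup $H \leq G$ satisfies $asdim\,H \leq asdim\,G$, since $H$ with a word metric is quasi-isometrically embedded, or at least coarsely embedded, into $G$). Choosing a vertex $v_0$ realizing the maximum, so that $asdim\,G_{v_0} = n$, I would conclude
\begin{center}
$n = asdim\,G_{v_0} \leq asdim \pi_{1}(\mathbb{G},Y,\mathbb{T})$.
\end{center}
Combining the two bounds yields $asdim \pi_{1}(\mathbb{G},Y,\mathbb{T}) = n$.

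The only delicate point is justifying the subgroup monotonicity step cleanly. For general (possibly infinitely generated) subgroups one must be slightly careful about which metric is used, but here the vertex groups $G_v$ are themselves among the data of a finite graph of groups and the natural setting is finitely generated groups, so the embedding $G_{v_0} \hookrightarrow \pi_{1}(\mathbb{G},Y,\mathbb{T})$ is an embedding of finitely generated groups and the inequality $asdim\,G_{v_0} \leq asdim\,\pi_{1}(\mathbb{G},Y,\mathbb{T})$ is standard. I expect this to be the main (though routine) obstacle, since it is the one place relying on a general property of asymptotic dimension rather than on the theorems proved earlier in the paper. The rest is purely the arithmetic of the maximum.
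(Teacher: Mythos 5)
Your proposal is correct and matches the paper's intent: the paper states this proposition as an immediate corollary of Theorem \ref{4.1} without further proof, and the content you supply (the arithmetic showing the right-hand side of the inequality equals $n$ under the hypothesis, plus the standard lower bound $asdim\,G_{v_0} \leq asdim\,\pi_{1}(\mathbb{G},Y,\mathbb{T})$ coming from the Bass--Serre embedding of vertex groups and monotonicity of asymptotic dimension under subgroups) is exactly the intended reasoning.
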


\textit{E-mail}: panagiotis.tselekidis@queens.ox.ac.uk


\textit{Address:} Mathematical Institute, University of Oxford, Andrew Wiles Building, Woodstock Rd, Oxford OX2 6GG, U.K.


\end{document}